\numberwithin{equation}{section}
\title[ ]{Exponential Decay of the lengths of  Spectral Gaps for Extended Harper's Model with Liouvillean Frequency }
\author{Yunfeng Shi}
\address{School of Mathematical Sciences, Fudan University, Shanghai 200433, P. R. China} \email{yunfengshi13@fudan.edu.cn}
\author{Xiaoping Yuan}
\address{School of Mathematical Sciences, Fudan University, Shanghai 200433, P. R. China}
\email{xpyuan@fudan.edu.cn}
\keywords{Extended Harper's model, Liouvillean frequency, spectral gaps, quantitative reducibility.}
\thanks{{This work is supported by National Natural Science Foundation of China (No. 11771093 and No. 11421061)}.}
\theoremstyle{plain}
\newtheorem{theorem}{Theorem}[section]
\newtheorem{lemma}[theorem]{Lemma}
\theoremstyle{definition}
\newtheorem{definition}[theorem]{Definition}
\newtheorem{remark}[theorem]{Remark}
\begin{document}

%%% ----------------------------------------------------------------------

\begin{abstract}
In this paper, we study the non-self dual extended Harper's model with Liouvillean frequency. %and the coupling triple being in non-self dual regime.
By establishing quantitative reducibility results together with the averaging method, we prove that the lengths of the spectral gaps decay exponentially.
\end{abstract}
%%% ----------------------------------------------------------------------
\maketitle
%%% ----------------------------------------------------------------------
%%%%%%%%%%%%%%%%%%%%%%%%%%%%%%%%%%%%%%%%%%%%%%%%%%%%%%%%%%%%%%%%%%%%%%%%%%
%INTRODUCTION%%%%%%%%%%%%%%%%%%%%%%%%%%%%%%%%%%%%%%%%%%%%%%%%%%%%%%%%%%%%%
%%%%%%%%%%%%%%%%%%%%%%%%%%%%%%%%%%%%%%%%%%%%%%%%%%%%%%%%%%%%%%%%%%%%%%%%%%
%\tableofcontents
 \section{Introduction and main result}
The extended Harper's model (EHM for short) was originally proposed by Thouless \cite{Thouless} to describe the influence of a transversal magnetic field on a single tight-binding electron in a 2-dimensional crystal layer (see \cite{Thouless,AJM}).
%Let us consider the  extended Harper's model (EHM for short), which is given  by
More exactly, the EHM is given by
\begin{equation}\label{h1}
(H_{\lambda,\alpha,x}u)_n=c(x+n\alpha)u_{n+1}+\overline{c}(x+(n-1)\alpha)u_{n-1}+2\cos{2\pi(x+n\alpha)}u_n,
\end{equation}
 where $u=\{u_n\}_{n\in\mathbb{Z}}\in \ell^2(\mathbb{Z})$ and
 \begin{eqnarray*}
 c(x)=c_{\lambda}(x)=\lambda_1e^{-2\pi i(x+\frac{\alpha}{2})}+\lambda_2+\lambda_{3}e^{2\pi i(x+\frac{\alpha}{2})},\\ \overline{c}(x)=\overline{c}_{\lambda}(x)=\lambda_1e^{2\pi i(x+\frac{\alpha}{2})}+\lambda_2+\lambda_{3}e^{-2\pi i(x+\frac{\alpha}{2})}.
 \end{eqnarray*}
 Usually, one calls $\lambda=(\lambda_1,\lambda_2,\lambda_3)\in\mathbb{R}^3_+$ the coupling, $\alpha\in\mathbb{R}\setminus\mathbb{Q}$ the frequency and $x\in\mathbb{R}$ the phase respectively. When $\lambda_1=\lambda_3=0$, the EHM reduces to the famous almost Mathieu operator (AMO for short).
 %The EHM was originally proposed by Thouless \cite{Thouless} and if $\lambda_1=\lambda_3=0$ it reduces to the famous almost Mathieu operator (AMO for short). Physically, the EHM describes the influence of a transversal magnetic field of flux $\alpha$ on a single tight-binding electron in a 2-dimensional crystal layer (see \cite{Thouless,AJM}).

 It is well-known that the spectrum of $H_{\lambda,\alpha,x}$ does not depend on $x$  and we denote it by $ \Sigma_{\lambda ,\alpha}$. Since $\Sigma_{\lambda,\alpha}$ is a compact subset of $\mathbb{R}$, we let $ E_{\text{min}} =\min\{E: E\in \Sigma_{\lambda,\alpha}\} $, $ E_{\text{max}} =\max\{E: E\in \Sigma_{\lambda,\alpha}\}$ and $G_0=(-\infty,E_{\min})\bigcup(E_{\max},+\infty)$. Actually, each  connected component of  $[E_{\text{min}},E_{\text{max}}]\backslash \Sigma_{\lambda ,\alpha}$ is called a (nontrivial) spectral gap. From the gap labelling theorem \cite{JM1982,Delyon1983The}, for every  spectral gap $G$ there exists a unique nonzero integer $m$ such that $2\rho_{\lambda,\alpha}|_{G}=m\alpha \mod{\mathbb{Z}}$, where $\rho_{\lambda,\alpha}(\cdot)$ is the fibered rotation number of the EHM (see sections 2.3 and 2.4 for details) and \begin{equation}\label{sgap}[E_m^-,E_m^+]=\{E_{\text{min}}\leq E\leq E_{\text{max}}:2\rho_{\lambda,\alpha}(E)=m\alpha \mod{\mathbb{Z}}\}.\end{equation}
 If $E_m^-=E_m^+$, $G_m=\{E_m^-\}$ is called a collapsed spectral gap. If $E_m^-\neq E_m^+$,  $G_m=(E_m^-,E_m^+)$ is called an open spectral gap.
\begin{figure}[H]
 \centering
 \includegraphics[width=8.0cm]{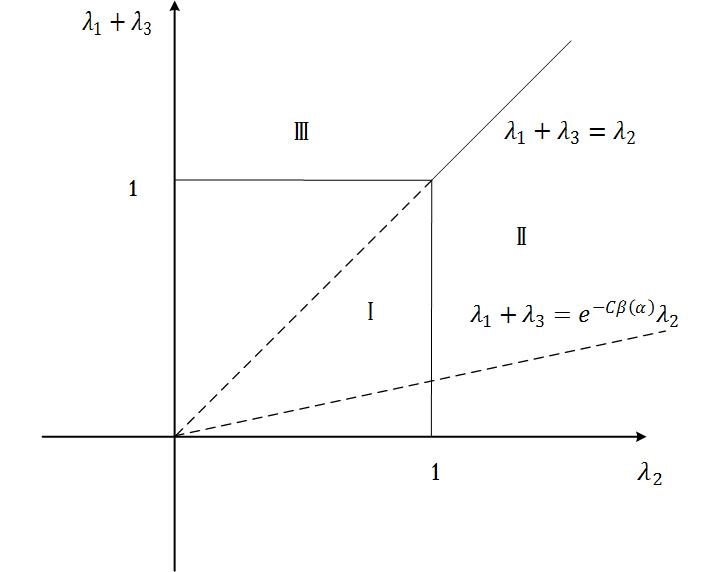}
\caption{}\label{Fig}
\end{figure}
In fact, the properties of $\Sigma_{\lambda,\alpha}$ depend heavily on $\lambda,\alpha$. In general, we split the coupling region into three parts (see \textsc{Figure} 1):
  \begin{eqnarray*}
  &&\mathrm{I}=\{(\lambda_1,\lambda_2,\lambda_3)\in\mathbb{R}^3_+:0<\max\{\lambda_1+\lambda_3,\lambda_2\}<1\},\\
 &&\mathrm{II}=\{(\lambda_1,\lambda_2,\lambda_3)\in\mathbb{R}^3_+:0<\max\{\lambda_1+\lambda_3,1\}<\lambda_2\},\\
  &&\mathrm{III}=\{(\lambda_1,\lambda_2,\lambda_3)\in\mathbb{R}^3_+:0<\max\{\lambda_2,1\}<\lambda_1+\lambda_3\}.
  \end{eqnarray*}
According to the duality map $\sigma:(\lambda_1,\lambda_2,\lambda_3)\rightarrow (\frac{\lambda_3}{\lambda_2},\frac{1}{\lambda_2},\frac{\lambda_1}{\lambda_2})$, $\mathrm{I}$ and  $\mathrm{II}$ are dual to each other and  $\mathrm{III}$ is the self-dual region. Note also that $\mathrm{I}$ is the region of positive Lyapunov exponent (see section 2.1 for the definition). Regarding the frequency $\alpha$, we define
\begin{equation}\label{beta}
\beta(\alpha)=\limsup_{k\to \infty}\frac{-\ln||k\alpha||_{\mathbb{R}/\mathbb{Z}}}{|k|},
\end{equation}
where $||x||_{\mathbb{R}/\mathbb{Z}}=\min\limits_{k\in\mathbb{Z}}|x-k|$. Then we call $\alpha$ a Liouvillean frequency if $\beta(\alpha)>0$. Moreover,  $\alpha$ is called respectively a weak Diophantine frequency if $\beta(\alpha)=0$ and a Diophantine frequency if there exist $\gamma>1,\mu>0$ such that $\|k\alpha\|_{\mathbb{R}/\mathbb{Z}}\geq\frac{\mu}{|k|^\gamma}$ for $\forall\  k\in\mathbb{Z}\setminus\{0\}$.
%On the contrary,  $\alpha$ is called a Diopantine frequency for $\beta(\alpha)=0$.

Our main theorem of this paper is:

\begin{theorem}\label{main}
Let $\alpha\in\mathbb{R}\setminus\mathbb{Q}$ with $0\leq\beta(\alpha)<\infty$ and $E_m^-,E_m^+$ be given by (\ref{sgap}).  Then there exists an absolute constant $C>1$ such that, if  $\lambda\in \mathrm{II}$ and $\mathcal{L}_{\overline{\lambda}}>C\beta(\alpha)$, one has for $|m|\geq m_{\star}$
\begin{equation}\label{upp}
E_m^+-E_m^-\leq  e^{-C^{-1}\mathcal{L}_{\overline{\lambda}}|m|},
\end{equation}
where
\begin{equation}\label{lya}\mathcal{L}_{\overline{\lambda}}=\ln{\frac{\lambda_2
    +\sqrt{\lambda_2^2-4\lambda_1\lambda_3}}{\max\{\lambda_1+\lambda_3,1\}+\sqrt{\max\{\lambda_1+\lambda_3,1\}^2-4\lambda_1\lambda_3}}},\end{equation}
and $m_\star$ is a positive constant depending only on $\lambda,\alpha$.
\end{theorem}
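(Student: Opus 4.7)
The plan is to combine Aubry duality with a quantitative almost reducibility scheme for the Schrödinger cocycle of $H_{\lambda,\alpha,x}$, and then apply an averaging argument at the spectral gap edges to read off the size of the $m$-th gap from the $m$-th Fourier coefficient of an analytic conjugation.

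First I would rewrite the eigenvalue equation $H_{\lambda,\alpha,x}u=Eu$ as a one-parameter family of $\mathrm{SL}(2,\mathbb{C})$ cocycles $(\alpha,A_E^{\lambda})$ (after the standard gauge that clears the factor $c(x)$), and observe that for $\lambda\in\mathrm{II}$ the cocycle is \emph{subcritical} in Avila's global theory: it extends analytically and has zero Lyapunov exponent on a complex strip $\{|\Im x|<h_\star\}$ whose width $h_\star$ is comparable to $\mathcal{L}_{\overline{\lambda}}/(2\pi)$. Equivalently, the Aubry dual parameter $\sigma(\lambda)$ lies in $\mathrm{I}$ and the dual cocycle has Lyapunov exponent exactly $\mathcal{L}_{\overline{\lambda}}$ on the real axis; this is the role of the quantity (\ref{lya}) in the statement.

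Next I would establish a quantitative almost reducibility theorem for $(\alpha,A_E^{\lambda})$ on this strip. The construction is a KAM iteration producing conjugations $B_n$ analytic on shrinking strips $\{|\Im x|<h_n\}$ with $h_n\searrow h_\infty>0$, such that
$B_n(x+\alpha)^{-1}A_E^{\lambda}(x)B_n(x)=R_{\rho_n}+F_n(x)$
with $\|F_n\|_{h_n}\leq e^{-c\mathcal{L}_{\overline{\lambda}} N_n}$, where $N_n$ is an exponentially growing truncation scale driven by the continued-fraction approximants of $\alpha$. The Liouvillean hypothesis $\mathcal{L}_{\overline{\lambda}}>C\beta(\alpha)$ is used precisely to keep the cumulative analyticity loss, which is dictated by the small divisors $\|k\alpha\|_{\mathbb{R}/\mathbb{Z}}^{-1}$ at each step, dominated by the initial subcritical radius; this guarantees that $h_\infty$ remains a definite fraction of $\mathcal{L}_{\overline{\lambda}}/(2\pi)$ and that the $B_n$ have at most sub-exponentially growing norms.

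Then I would bring in the averaging method at gap edges. By the gap labelling theorem, $2\rho_{\lambda,\alpha}(E_m^\pm)\equiv m\alpha\pmod{\mathbb{Z}}$, which is exactly the resonance condition under which the almost-reduced cocycle couples strongly to its $|m|$-th Fourier mode. After a resonance-removing conjugation by a diagonal rotation of the form $\mathrm{diag}(e^{i\pi m x},e^{-i\pi m x})$, the resulting block-constant normal form shows that, up to a polynomial factor in $|m|$, the gap width $E_m^+-E_m^-$ is bounded by $|\widehat{F_{n(m)}}(m)|$ at the scale $N_{n(m)}\sim |m|$. Since $F_n$ is analytic on a strip of width $h_\infty\geq c\mathcal{L}_{\overline{\lambda}}$, Cauchy's estimate gives $|\widehat{F_n}(m)|\lesssim e^{-C^{-1}\mathcal{L}_{\overline{\lambda}}|m|}$, which is (\ref{upp}).

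The main obstacle is the quantitative bookkeeping that ties the Liouvillean parameter $\beta(\alpha)$ to the subcritical radius $\mathcal{L}_{\overline{\lambda}}/(2\pi)$ through the KAM scheme, so that the exponent in (\ref{upp}) depends only on $\mathcal{L}_{\overline{\lambda}}$ through an \emph{absolute} constant $C$ and not on finer arithmetic data of $\alpha$ or on $\lambda$. Getting this requires the step-by-step loss in strip width, the blow-up of $\|B_n\|_{h_n}$, and the growth of $N_n$ to be balanced uniformly against $\beta(\alpha)$. A secondary technicality is the resonance-removal argument for small $|m|$ near the spectral edges, which forces us to impose the threshold $m_\star$ depending on $\lambda,\alpha$ but does not affect the exponential rate.
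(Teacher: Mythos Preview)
Your outline has the right overall architecture, but it diverges from the paper in a way that matters, and one step contains a genuine gap.

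The paper does \emph{not} run a KAM iteration on the subcritical cocycle. Instead it passes to the Aubry dual $H_{\overline{\lambda},\alpha,\theta}$ in region~I, proves almost localization there (Theorem~\ref{aalt}) via Green's function estimates, and uses the decaying generalized eigenfunctions to build an explicit Bloch-wave conjugation $U_\star$. At a gap edge $E=E_m^\pm$ the phase $\theta(E)$ is shown to be non-resonant (Lemma~\ref{rnt2}), so this conjugation reduces $(\alpha,\overline{A}_{\lambda,E})$ \emph{exactly} to a parabolic matrix $\left[\begin{smallmatrix}\pm1&a_m\\0&\pm1\end{smallmatrix}\right]$ with $|m|\le Cn$, where $2\theta=\widetilde{n}\alpha$ and $n=|\widetilde{n}|$. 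The bound $|a_m|\le C_\star e^{-\eta n/2}$ then comes from iterating the parabolic identity $B^{-1}(x+l\alpha)\overline{A}_l(x)B(x)=\left[\begin{smallmatrix}\pm1&la_m\\0&\pm1\end{smallmatrix}\right]$ up to $l\sim e^{3\eta n/4}$ and invoking the transfer-matrix estimate $\|\overline{A}_l\|\le e^{C\beta(\alpha)n}$ (Lemma~\ref{te}); the averaging perturbation (Lemma~\ref{p3}, Theorem~\ref{pnm}) then shows that moving $E$ by $\epsilon_m\sim a_m\cdot e^{C\beta(\alpha)n}$ already changes the rotation number, whence $E_m^+-E_m^-\le|\epsilon_m|$.

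Your step ``the gap width is bounded by $|\widehat{F_{n(m)}}(m)|$, then apply Cauchy's estimate'' is where the argument breaks. After the degree-$m$ rotation is absorbed into the conjugation, the relevant error has been reorganized and it is its \emph{full norm} at the matched scale (or, as above, the parabolic entry $a_m$) that controls the gap, not a single Fourier mode of the pre-rotation remainder. If instead you keep $F_n$ as the error before any resonance removal, then the constant part is $R_{\rho_n}$ with $2\rho_n$ far from $0\bmod\mathbb{Z}$, and there is no direct link between $\widehat{F_n}(m)$ and the gap width. More seriously, for genuinely Liouvillean $\alpha$ the direct KAM scheme you describe is precisely what the duality/almost-localization route is designed to circumvent: the small divisors $\|k\alpha\|^{-1}$ can be as large as $e^{\beta(\alpha)q_s}$ at every scale, and you have not explained how the iteration converges with only the hypothesis $\mathcal{L}_{\overline{\lambda}}>C\beta(\alpha)$ and an absolute~$C$. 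The paper's approach avoids this by never solving an infinite sequence of cohomological equations; the single homological equation it does solve (equation~\eqref{se11}) is handled in one shot because the Bloch wave already exists.
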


%\begin{remark}
%Actually, $\mathcal{L}_{\overline{\lambda}}$ is the Lyapunov exponent for $H_{\overline{\lambda},\alpha,\theta}$ (see \ref{}).
%\end{remark}

\begin{remark}
If $\frac{\lambda_2}{\max\{\lambda_1+\lambda_3,1\}}>e^{C\beta(\alpha)}$, then $\mathcal{L}_{\overline{\lambda}}>C\beta(\alpha)$.
\end{remark}

%\begin{remark}
%For weak Diophantine $\alpha$ (i.e., $\beta(\alpha)=0$), the inequality (\ref{upp}) holds for all $\lambda\in\mathrm{II}$.
%\end{remark}

\begin{remark}
 Based on this theorem, Jian-Shi \cite{JS} proved the $\frac{1}{2}$-H\"older continuity of the integrated density of states for the EHM. They also obtained the Carleson homogeneity of the spectrum.
\end{remark}

{
 The investigations of the spectral gaps for the AMO (i.e., $\lambda_1=\lambda_3=0$) are closely related to the Cantor set structure of the spectrum $\Sigma_{\lambda_2,\alpha}$. In fact, the famous Ten Martini problem says that $\Sigma_{\lambda_2,\alpha}$ is a Cantor set for all $\lambda_2\neq 0, \alpha\in\mathbb{R}\setminus\mathbb{Q}$. Much effort \cite{BSJF,HSsm,Puig,CEY} was expended to solve  the Ten Martini problem  and  finally it was proved by Avila and Jitomirskaya  \cite{Ten}.  A stronger assertion which is called the dry Ten Martini problem suggests that $\Sigma_{\lambda_2,\alpha}$ contains no collapsed spectral gap for all $\lambda_2\neq 0, \alpha\in\mathbb{R}\setminus\mathbb{Q}$. To the best of our knowledge, the dry Ten Martini problem still remains open and only partial results were obtained \cite{Ten,CEY,Puig,AJ2010,LYJFG,AYZ}. In fact, Avila-You-Zhou \cite{AYZ} proved the dry Ten Martini problem for  the non-critical  AMO.

 %By using techniques from noncommutative geometry, Choi-Elliott-Yui \cite{CEY} got some concrete positive lower bounds of the lengths of spectral gaps for AMO with $\alpha$ being Liouvillean and $\lambda_2$ satisfying some assumptions. Later, by establishing reducibility result together with perturbing at boundary of a spectral gap, Puig \cite{Puig,Puig06} proved $\Sigma_{\lambda_2,\alpha}$ contains no collapsed spectral gap if $\alpha$ is Diophantine and $(\lambda_2,\alpha)$ in a set of positive Lebesgue measure. The ideas of \cite{Puig,Puig06} were significantly improved by Avila and Jitomirskaya \cite{AJ2010,AJCMP,AA2008}, which allows them {\color{red} to} deal with all Diophantine $\alpha$ and non-critical coupling (i.e. $\lambda_2\neq 1$).  In \cite{LYJFG,LYJMP}, Liu and Yuan extended Avila-Jitomirskaya's reducibility result to the case of Liouvillean frequency and large $\lambda_2$. For further lower bounds of the lengths of spectral gaps for AMO,  see \cite{kra} and the references therein.
The first result concerning upper bounds of the lengths of the spectral gaps for the lattice quasi-periodic Schr\"odinger operators was proved by Amor \cite{Amor} in which she showed that the lengths of the spectral gaps decay sub-exponentially. She used the KAM techniques developed by Eliasson \cite{eli}. Thus the frequency must satisfy the Diophantine condition. Recently, Leguil-You-Zhao-Zhou \cite{LYZZ} proved that the lengths of the spectral gaps for the general Schr\"odinger operators with weak Diophantine frequency  decay exponentially. Moreover, they  obtained  the lower bounds of the lengths of the spectral gaps for the AMO with Diophantine frequency.
Based on some results of \cite{LYJFG}, Liu and Shi \cite{LS} generalized a result of \cite{LYZZ} to the Liouvillean frequency case.
 %Recently,Leguil   \cite{Leguil} showed that the rotation number at $E+\epsilon$  of the quasi-periodic Schr\"odinger operator will change with relatively large perturbation $\epsilon$ based on the refined reducibility result of   \cite{AJ2010}, where $E=E_m^-$ or $E=E_m^+$.
% This implies the exponential  decay of the lengths of spectral gaps for quasi-periodic Schr\"odinger operators with general analytical potentials and  Diophantine frequencies.
% Before that, by using KAM theory stemming from \cite{eli},
  %Amor \cite{Amor} proved sub-exponential decay of the lengths of spectral gaps  for quasi-periodic Schr\"odinger operators with small coupling constant and  Diophantine frequencies\footnote{Diophantine condition  in \cite{Amor}  is a little bit  stronger than $\beta(\alpha)=0$.}.  Although comparing to the results in \cite{Leguil}, the smallness condition  is stronger in   \cite{Amor},
%the method works  in high dimension  case.  In \cite{LS}, Liu and Shi obtained the exponential decay of the lengths of spectral gaps for quasi-periodic Schr\"odinger operators with Liouvillean frequencies and small coupling.

For  the continuous quasi-periodic Schr\"odinger operators,  Damanik-Goldstein \cite{dam} and Damanik-Goldstein-Lukic \cite{DGL} obtained the upper bounds of the lengths of the spectral gaps. In a recent work by Parnovski and Shterenberg \cite{ParS}, they got the asymptotic expansion for the length of some spectral gap.
 %For continuous  Hill's operators (i.e. periodic operators), there were many works \cite{Po2011,Ps2004,Tru1977,Hoc1963,Mck1975} involving decay of the lengths of spectral gaps.

All results mentioned above are attached to the Schr\"odinger type operators and  little is known about the Jacobi type operators (such as the EHM). In \cite{Han}, Han  proved the spectrum of the non-self dual EHM with weak Diophantine frequency contains no collapsed spectral gap.

For a more detailed exposition of the history of the spectral gaps studying, we refer the reader to \cite{LS,KCMP,LYZZ}.

The methods of the present paper follow that of \cite{AJ2010,LYZZ}, but more subtle estimates and technical differences. More precisely, using ideas of \cite{AJ2010,LYZZ}, we first establish (at the boundary of some spectral gap) quantitative reducibility results for the extended Harper's cocycles. Then using the averaging method, we can show that the fibered rotation number (of the EHM) under some perturbation will change, which allows us to  get  an upper bound of the length of the spectral gap.

The present paper is organized as follows. In section 2, we give some basic concepts and notations.  In section 3, we prove the almost localization results for the EHM.  In section 4, we obtain the almost reducibility results for the EHM if the phases are resonant. In section 5, we get the reducibility results for the EHM with non-resonant phases.  In section 6, we complete the proof of the main theorem by combining  the quantitative reducibility results with the averaging method.}

\section{Some basic concepts and notations}
\subsection{Cocycle, transfer matrix and Lyapunov exponent} Let $\alpha\in\mathbb{R}\setminus\mathbb{Q}$ and $C^{\omega}(\mathbb{R}/\mathbb{Z}, \mathcal{B})$ be the set of all analytic maps from $\mathbb{R}/\mathbb{Z}$ to some Banach space $(\mathcal{B},||\cdot||)$. By a cocycle, we mean a pair $(\alpha,A)\in (\mathbb{R}\setminus\mathbb{Q})\times C^{\omega}(\mathbb{R}/\mathbb{Z},{\rm SL}(2,\mathbb{R}))$.
We can regard $(\alpha,A)$ as a dynamical system  on $(\mathbb{R}/\mathbb{Z})\times \mathbb{R}^2$ with
\begin{equation*}
(\alpha,A):(x,v)\longmapsto (x+\alpha, A(x)v),\ (x,v)\in (\mathbb{R}/\mathbb{Z})\times \mathbb{R}^2.
\end{equation*}
For any $k>0,k\in\mathbb{Z}$, we
define  the $k$-step transfer matrix of $A(x)$ as
\begin{equation*}
A_k(x)=\prod\limits_{l=k}^{1}A(x+(l-1)\alpha),
\end{equation*}
and the Lyapunov exponent for $(\alpha,A)$ as
\begin{equation*}
\mathcal{L}(\alpha,A)=\lim_{k\to +\infty  }\frac{1}{k}\int_{\mathbb{R}/\mathbb{Z}}\ln||A_k(x)||\mathrm{d}x=\inf_{k>0}\frac{1}{k}\int_{\mathbb{R}/\mathbb{Z}}\ln||A_k(x)||\mathrm{d}x.
\end{equation*}

\subsection{Reducibility and almost reducibility}
  We say that two cocycles $(\alpha,A_i)$ ($i=1,2$) are (analytically) conjugate  if there is some $B\in C^{\omega}(\mathbb{R}/\mathbb{Z},\text{PSL}(2,\mathbb{R}))$ such that
\begin{equation*}
B^{-1}(x+\alpha)A_1(x)B(x)=A_2(x).
\end{equation*}
%and we call $(\alpha,A)$ is conjugate to $(\alpha, B)$ if there is some conjugacy between them.
We say that a cocycle $(\alpha,A)$ is (analytically) reducible if it is conjugate to $(\alpha,A_*)$, where $A_*$ is a constant matrix. Moreover, a cocycle $(\alpha,A)$ is called almost reducible if the closure of its analytic conjugacy class contains a constant (see \cite{AJ2010}).

Given $B\in C^{\omega}(\mathbb{R}/\mathbb{Z},\text{PSL}(2,\mathbb{R}))$, we say the degree of $B$ is $k$ and denote by $\text{deg}(B)=k$,
if $B$ is homotopic to $  R_{\frac{k}{2}x}$ for some $k\in\mathbb{Z}$, where
\begin{equation*}
R_{x}=\left[
 \begin{array}{cc}
 \cos{2\pi x}&-\sin{2\pi x}\\
\sin{2\pi x}&\cos{2\pi x}\end{array}
\right].
\end{equation*}

\subsection{Fibered rotation number }
  Suppose $A\in C^{\omega}(\mathbb{R}/\mathbb{Z}, \text{SL}(2,\mathbb{R}))$ is homotopic to the identity.
  Then  the fibered rotation number $\rho_{\alpha}(A)$ of the cocycle $(\alpha,A)$ is well defined. More precisely, there exist $\phi:(\mathbb{R}/\mathbb{Z})\times(\mathbb{R}/\mathbb{Z})\to \mathbb{R}$ and $u:(\mathbb{R}/\mathbb{Z})\times(\mathbb{R}/\mathbb{Z})\to \mathbb{R}^{+}$ such that
  $$A(x)\cdot\left(
           \begin{array}{c }
             \cos2\pi y \\
             \sin2\pi y\\
           \end{array}
         \right)=u(x,y)\left(
           \begin{array}{c }
             \cos2\pi(y+\phi(x,y)) \\
             \sin2\pi(y+\phi(x,y))\\
           \end{array}
         \right).$$
  The function $\phi$ is called a lift of $A$.
  %the cocycle $(\alpha,A)$ induces a project skew-product $F_{\alpha,A}: (\mathbb{R}/\mathbb{Z})\times \mathbb{S}^1\rightarrow(\mathbb{R}/\mathbb{Z})\times \mathbb{S}^1$  with $F_{\alpha,A}(x,v)=(x+\alpha,\frac{A(x)v}{||A(x)v||})$. Since $A(x)$ is homotopic to the identity, so is $F_{\alpha,A}$. Thus there is a lift $\widetilde{F}_{\alpha,A}$ of $F_{\alpha,A}$ on $(\mathbb{R}/\mathbb{Z})\times \mathbb{R}$ with $\widetilde{F}_{\alpha,A}(x,y)=(x+\alpha,y+\phi_x(y))$, where $\phi_x(y+1)=\phi_x(y)$ for all $x\in\mathbb{R}/\mathbb{Z}$ and $y\in\mathbb{R}$. The map $\phi: (\mathbb{R}/\mathbb{Z})\times (\mathbb{R}/\mathbb{Z})\rightarrow\mathbb{R}$ with $\phi(x,y)=\phi_x(y)$ is called a lift of $A$.
  Let $\mu$ be any probability measure on $(\mathbb{R}/\mathbb{Z})\times\mathbb{R}$ which is invariant under the continuous map $T:(x,y)\mapsto(x+\alpha,y+\phi(x,y))$. Assume further  $\mu$  projects over the Lebesgue measure on the first coordinate. Then the number

  $$\rho_{\alpha}(A)=\int_{(\mathbb{R}/\mathbb{Z})\times\mathbb{R}}\phi(x,y)\mathrm{d}\mu\mod\mathbb{Z}$$
  does not depend on the choices of $\phi, \mu$, and is called the fibered rotation number of $(\alpha,A)$ (see \cite{JM1982,Herman,AJ2010}).

Let $A_1, A_2\in C^{\omega}(\mathbb{R} / \mathbb{Z},\text{SL}(2,\mathbb{R}))$  and  $ B\in C^{\omega}(\mathbb{R}/ \mathbb{Z},\text{PSL}(2,\mathbb{R}))$.  If  $A_1 $ is  homotopic to the identity and $ B^{-1}(x+\alpha)A_1(x)B(x) =A_2(x)$, then   $ A_2$ is  homotopic to the identity
and
\begin{equation}\label{pr2}
2\rho_\alpha(A_1)-2\rho_\alpha(A_2)=\deg{(B)}\alpha\ \ \mod{\mathbb{Z}}.
\end{equation}
Moreover, there is  some absolute constant $C>0$ such that
\begin{equation}\label{rr}
|\rho_\alpha(A)-\theta|\leq C\sup_{x\in\mathbb{R}/\mathbb{Z}}||A(x)-R_\theta||.
\end{equation}
\subsection{Extended Harper's cocycle}
  If $c(x)\neq 0$, the equation $$H_{\lambda,\alpha,x}u=Eu$$ is equivalent to
  \begin{equation*}
  \left(
           \begin{array}{c }
             u_{k+1} \\
             u_k\\
           \end{array}
         \right)=A_{\lambda,E}(x+k\alpha)\left(
           \begin{array}{c }
             u_{k} \\
             u_{k-1}\\
           \end{array}
         \right),
  \end{equation*}
  where $A_{\lambda,E}(x)=\frac{1}{c(x)}\left[
 \begin{array}{cc}
 E-2\cos2\pi x&-\overline{c}(x-\alpha)\\
c(x)&0\end{array}
\right]$. In general, $A_{\lambda,E}(x)\notin \mathrm{SL}(2,\mathbb{R})$ and we consider the ``renormalized'' cocycle $(\alpha, \overline{A}_{\lambda,E})$ with
\begin{eqnarray*}\overline{A}_{\lambda,E}(x)&=&\frac{1}{\sqrt{|c|(x)|c|(x-\alpha)}}\left[
 \begin{array}{cc}
 E-2\cos2\pi x&-|c|(x-\alpha)\\
|c|(x)&0\end{array}
\right]\\
&=&Q_{\lambda}(x+\alpha)A_{\lambda,E}(x)Q^{-1}_{\lambda}(x),
\end{eqnarray*}
 where $|c|(x)=\sqrt{c(x)\overline{c}(x)}$ \begin{footnote}{If $x\in\mathbb{R}$, $\overline{c}(x)$ is the complex conjugate of $c(x)$  . If $x\in\mathbb{C}\setminus\mathbb{R}$, $\overline{c}(x)$ is the analytic extension of $\overline{c}(x)$.}\end{footnote} and $Q_\lambda,Q^{-1}_\lambda$ are analytic on $|\Im x|\leq \frac{\mathcal{L}_{\overline{\lambda}}}{4\pi}$ (see \cite{JS} for more details). We call $(\alpha, \overline{A}_{\lambda,E})$ the extended Harper's cocycle and denote by $\mathcal{L}_{\lambda}(E)=\mathcal{L}(\alpha,\overline{A}_{\lambda,E})$ its Lyapunov exponent. Actually, there is a direct definition of the Lyapunov exponent $\mathcal{L}(\alpha,A_{\lambda,E})$ for $(\alpha,A_{\lambda,E})$ (see \cite{JiM1}) and $\mathcal{L}_{\lambda}(E)=\mathcal{L}(\alpha,A_{\lambda,E})$. For a matrix-valued function $M(x)$ with $x\in\mathbb{R}/\mathbb{Z}$, we let $M^\epsilon(x)=M(x+i\epsilon)$ be the phase-complexification of $M(x)$. It was proved in \cite{JiM1} that

 \begin{lemma}[Theorem 1.1 of \cite{JiM1}] We have the following statements.
 \begin{enumerate}
\item[$\mathrm{(i)}$] If $\lambda\in\mathrm{II}$, then  $\overline{\lambda}=(\frac{\lambda_3}{\lambda_2}, \frac{1}{\lambda_2},\frac{\lambda_1}{\lambda_2})\in\mathrm{I}$ and $\mathcal{L}_{\overline{\lambda}}>0$.
\item[$\mathrm{(ii)}$] If $\lambda\in\mathrm{II}$, then $\mathcal{L}(\alpha,A^\epsilon_{\lambda,E})=\mathcal{L}(\alpha,\overline{A}^\epsilon_{\lambda,E})=0$ for $|\epsilon|\leq \frac{\mathcal{L}_{\overline{\lambda}}}{2\pi}$.
\end{enumerate}
  \end{lemma}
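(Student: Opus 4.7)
The plan is to treat the two statements separately, since (i) is essentially a bookkeeping check while (ii) relies on Avila's global theory.

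For part (i), I would just verify directly that the duality map $\sigma$ sends region $\mathrm{II}$ into region $\mathrm{I}$. Writing $\overline{\lambda}=(\lambda_1',\lambda_2',\lambda_3')=(\lambda_3/\lambda_2,1/\lambda_2,\lambda_1/\lambda_2)$, the hypothesis $\lambda_2>\max\{\lambda_1+\lambda_3,1\}$ immediately gives $\lambda_2'=1/\lambda_2<1$ and $\lambda_1'+\lambda_3'=(\lambda_1+\lambda_3)/\lambda_2<1$, so $\overline{\lambda}\in\mathrm{I}$. For the positivity $\mathcal{L}_{\overline{\lambda}}>0$, I would inspect formula (1.4): AM--GM gives $(\lambda_1+\lambda_3)^2\ge 4\lambda_1\lambda_3$, and in the subcase $\lambda_1+\lambda_3<1$ the same inequality gives $4\lambda_1\lambda_3<1$, so both radicals are real. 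Since $t\mapsto t+\sqrt{t^2-4\lambda_1\lambda_3}$ is strictly increasing on $t\ge 2\sqrt{\lambda_1\lambda_3}$ and $\lambda_2>\max\{\lambda_1+\lambda_3,1\}$, the argument of the logarithm in (1.4) is strictly greater than $1$.

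For part (ii), the plan is to invoke Avila's global theory of one-frequency analytic $\mathrm{SL}(2,\mathbb{R})$-cocycles. The renormalized cocycle $(\alpha,\overline{A}_{\lambda,E})$ is homotopic to the identity and analytic on a strip, so $\epsilon\mapsto \mathcal{L}(\alpha,\overline{A}^\epsilon_{\lambda,E})$ is convex and even, and its right-derivative at any $\epsilon$ is $2\pi\omega(\epsilon)$ with $\omega(\epsilon)\in\mathbb{Z}_{\ge 0}$ (the acceleration). On the spectrum one has $\mathcal{L}(\alpha,\overline{A}_{\lambda,E})=0$ for $\lambda\in\mathrm{II}$ by the Jitomirskaya--Marx duality with region $\mathrm{I}$, so $\epsilon=0$ lies in the subcritical regime. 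Convexity plus integrality of $\omega$ then forces $\mathcal{L}(\alpha,\overline{A}^\epsilon_{\lambda,E})\equiv 0$ on some maximal open strip $|\epsilon|<\epsilon_\star$.

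To identify $\epsilon_\star=\mathcal{L}_{\overline{\lambda}}/(2\pi)$ is the core quantitative point. Here I would use Aubry duality to relate $(\alpha,\overline{A}^\epsilon_{\lambda,E})$ to the cocycle of the dual operator with coupling $\overline{\lambda}\in\mathrm{I}$, whose Lyapunov exponent on the spectrum is given exactly by $\mathcal{L}_{\overline{\lambda}}$ (the Jitomirskaya--Koslover--Schulteis type formula for region $\mathrm{I}$). The translation $\epsilon\to\epsilon+\mathcal{L}_{\overline{\lambda}}/(2\pi)$ in the phase shifts the subcritical strip to its extremal position, and the renormalizing conjugacy $Q_\lambda$ is analytic precisely on $|\Im x|\le\mathcal{L}_{\overline{\lambda}}/(4\pi)$, which is consistent with this width. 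Finally, $\mathcal{L}(\alpha,A^\epsilon_{\lambda,E})=\mathcal{L}(\alpha,\overline{A}^\epsilon_{\lambda,E})$ on the same strip because $A^\epsilon_{\lambda,E}$ and $\overline{A}^\epsilon_{\lambda,E}$ are conjugate by $Q_\lambda^\epsilon$, which is analytic and nondegenerate there.

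The hard step is isolating the exact width $\mathcal{L}_{\overline{\lambda}}/(2\pi)$: the qualitative vanishing on \emph{some} strip follows cheaply from convexity and acceleration quantization, but matching the width to the explicit logarithm in (1.4) requires the sharp Aubry-duality identity between $\mathcal{L}(\alpha,\overline{A}^\epsilon_{\lambda,E})$ and the Lyapunov exponent of the dual cocycle, together with the exact closed-form Lyapunov exponent for region $\mathrm{I}$. This is exactly the content of \cite{JiM1}, and I would simply quote it.
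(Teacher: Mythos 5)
The paper itself offers no proof of this lemma — it is stated purely as a citation of Theorem~1.1 of Jitomirskaya--Marx \cite{JiM1} — so your plan of verifying (i) by hand and quoting \cite{JiM1} for (ii) is entirely consistent with the paper's treatment. Your part-(i) computation is correct (the region inclusion is immediate, both radicands in \eqref{lya} are nonnegative for exactly the reason you give, and positivity of $\mathcal{L}_{\overline{\lambda}}$ follows from strict monotonicity of $t\mapsto t+\sqrt{t^2-4\lambda_1\lambda_3}$ on $t\ge 2\sqrt{\lambda_1\lambda_3}$ together with $\lambda_2>\max\{\lambda_1+\lambda_3,1\}\ge\lambda_1+\lambda_3\ge 2\sqrt{\lambda_1\lambda_3}$), and your sketch of (ii) correctly identifies the structural ingredients of the \cite{JiM1} argument — Avila's convexity/acceleration quantization plus Aubry duality to pin the exact strip width — while rightly acknowledging that the sharp identification $\epsilon_\star=\mathcal{L}_{\overline\lambda}/(2\pi)$ is the nontrivial quantitative content one quotes rather than re-derives (note in particular that your intermediate claim ``$\mathcal{L}(\alpha,\overline{A}_{\lambda,E})=0$ at $\epsilon=0$ by duality with region I'' is itself part of Theorem~1.1 of \cite{JiM1}, which is harmless given you are ultimately citing it).
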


Let $\overline{H}_{\lambda,\alpha,x}$ be the Jacobi operator corresponding to $\overline{A}_{\lambda,E}$, i.e.,
$$(\overline{H}_{\lambda,\alpha,x}u)=|c|(x+n\alpha)u_{n+1}+|c|(x+(n-1)\alpha)u_{n-1}+2\cos2\pi(x+n\alpha)u_n.$$
  Then $\overline{H}_{\lambda,\alpha,x}$ is equivalent to $H_{\lambda,\alpha,x}$ (in the sense of  unitary).
  %Thus the spectral properties of $H_{\lambda,\alpha,x}$ are the same as that of $\overline{H}_{\lambda,\alpha,x}$.

 Since $\overline{A}_{\lambda,E}$ is homotopic to the identity (see \cite{Han} for an explicit homotopy),  we denote by $\rho_{\lambda,\alpha}(E)$ the fibered rotation number of $(\alpha,\overline{A}_{\lambda,E})$.

\subsection{Aubry duality} The map $\sigma:\lambda=(\lambda_1,\lambda_2,\lambda_3)\to\overline{\lambda}=(\frac{\lambda_3}{\lambda_2},\frac{1}{\lambda_2},\frac{\lambda_1}{\lambda_2})$ induces the duality between region $\mathrm{I}$ and region $\mathrm{II}$. We call $H_{\overline{\lambda},\alpha,x}$ the Aubry duality of $H_{\lambda,\alpha,x}$. Then we have $\Sigma_{\lambda,\alpha}=\lambda_2\Sigma_{\overline{{\lambda}},\alpha}$.
%for $\alpha\in\mathbb{R}\setminus\mathbb{Q}$.

 Let
 $ u:\mathbb{R}/\mathbb{Z}\rightarrow\mathbb{C}$ be some $L^2$-function with its Fourier coefficients  $\widehat u=\{\widehat{u}_n\}$  satisfying
  $ H _{\overline{\lambda},\alpha,\theta}\widehat{u}=\frac{E}{\lambda_2}\widehat{u}$. Then
  $U(x)= \left(
           \begin{array}{c }
             e^{2\pi i \theta }u(x) \\
             u(x-\alpha)\\
           \end{array}
         \right)
  $
satisfies
\begin{equation}\label{dualre}
A_{\lambda,E}(x)\cdot U(x)=e^{2\pi i \theta}U(x+\alpha).
\end{equation}
\subsection{Continued fraction expansion} For any $\alpha\in (0,1)$, we have the continued fraction expansion
\begin{equation*}
\alpha=[a_1,a_2,\cdots,a_n,\cdots]=\frac{1}{a_1+\frac{1}{a_2+\frac{1}{a_3+\frac{1}{\cdots}}}},
\end{equation*}
where $a_i\in\mathbb{N}^+$ ($i\in\mathbb{N}$) are inductively defined by the Gauss's map acting on $\alpha$.
We define
\begin{equation*}
\frac{p_n}{q_n}=[a_1,a_2,\cdots,a_n]=\frac{1}{a_1+\frac{1}{a_2+\frac{1}{a_3+\frac{1}{\cdots+\frac{1}{a_n}}}}},
\end{equation*}
where $(p_n,q_n)=1$.

For $\alpha\in (0,1)\setminus\mathbb{Q}$, one has
\begin{eqnarray*}
&&||k\alpha||_{\mathbb{R}/\mathbb{Z}}\geq ||q_n\alpha||_{\mathbb{R}/\mathbb{Z}},\ \mbox{for $0<|k|<q_{n+1}$},k\in\mathbb{Z},\\
&&\frac{1}{2q_{n+1}}\leq||q_n\alpha||_{\mathbb{R}/\mathbb{Z}}\leq\frac{1}{q_{n+1}}.
\end{eqnarray*}
It is easy to show
\begin{equation*}
\beta(\alpha)=\limsup_{n\to\infty}\frac{\ln{q_{n+1}}}{q_n},
\end{equation*}
where $\beta(\alpha)$ is given by (\ref{beta}).
%For simplicity, we omit the dependence of $\lambda$ and let $A^{E}(x)=S_{\lambda,E}$.
%and we can define the Lyapunov exponent for the cocycle $(\alpha,A)$ by
%\begin{equation*}
%L(\alpha,A)=\lim_{n\rightarrow \infty}\frac{1}{n}\int_{\mathbb{R}/\mathbb{Z}}\ln||A_{n}(x)||\mathrm{d}x
%=\inf_{n\in\mathbb{Z}}\frac{1}{n}\int_{\mathbb{R}/\mathbb{Z}}\ln||A_{n}(x)||\mathrm{d}x.
%\end{equation*
\subsection{Some notations}
We briefly comment on  the constants and norms in this paper.  We let $C(\alpha)>0$ be  a large constant depending  on $\alpha$   and $C_{\star}>0$ (resp. $c_{\star}>0$)  be a large (resp. small) constant   depending on $\lambda$ and $\alpha$. Define  $\Delta_s=\{z\in\mathbb{C}/\mathbb{Z}: |\Im{z}|\leq s\}$ and $||v||_{s}=\sup\limits_{z\in\Delta_s}||v(z)||$, where $v$ is a map from $\Delta_s$ to some Banach space $(\mathcal{B},||\cdot||)$.
 For any continuous map $v$ from $\mathbb{R}/\mathbb{Z}$ to $(\mathcal{B},||\cdot||)$, we let $[v]=\int_{\mathbb{R}/\mathbb{Z}}v(x)\mathrm{d}x$. In this paper, $ \mathcal{B}$ may be
$\mathbb{C}$, $ \mathbb{C}^2$ or $\text{SL}(2,\mathbb{C})$, equiped with the Euclidean norm (for a vector), or the  standard operator norm (for a matrix) respectively.

\section{Almost localization for {EHM} with Liouvillean frequency}

In this part, we will prove the almost localization for $H_{\overline{\lambda},\alpha,\theta}$ with ${\lambda}\in \mathrm{II}$. We need some useful definitions first.
\begin{definition}
Fix $\theta\in\mathbb{R},\epsilon_0>0$. We call $n\in\mathbb{Z}$ an $\epsilon_0$-resonance of $\theta$ if  $$\min\limits_{|k|\leq|n|}{||2\theta-k\alpha||_{\mathbb{R}/\mathbb{Z}}}=||2\theta-n\alpha||_{\mathbb{R}/\mathbb{Z}}\leq e^{-\epsilon_0 |n|}.$$
\end{definition}
Given $\theta\in\mathbb{R}$, we order all the $\epsilon_0$-resonances of $\theta$ as $0<|n_1|\leq|n_2|<\cdots$. We say $\theta$ is $\epsilon_0$-resonant if the set of all $\epsilon_0$-resonances of $\theta$ is infinite. The $\theta$ is called $\epsilon_0$-non-resonant if the set of all $\epsilon_0$-resonances of $\theta$ is finite. If $\{0,n_1,\cdots,n_j\}$ is the set of all $\epsilon_0$-resonances of $\theta$, then we let $n_{j+1}=\infty$.
\begin{definition}
Given $E\in\Sigma_{\overline{\lambda},\alpha}$, we say $H_{\overline{\lambda},\alpha,\theta}$ satisfies the $(C_0,\epsilon_0,\epsilon_1)$-almost localization if there exist some $C_0>0,\epsilon_0>0,\epsilon_1>0$ such that for any  solution $u$ of $H_{\overline{\lambda},\alpha,\theta}u=Eu$ with $u_0=1,|u_k|\leq 1+|k|$, one has
$$|u_k|\leq C_{\star}e^{-\epsilon_1|k|},\  \mathrm{for}\ C_0|n_j|<|k|<C_0^{-1}|n_{j+1}|,$$
where $n_0,n_1,\cdots,n_{j},\cdots$ are the ordered $\epsilon_0$-resonances of $\theta$ and $C_{\star}>0$  depends only on $\lambda,\alpha,u$.
\end{definition}

%\begin{remark}
% For $\beta(\alpha)=0$, Han \cite{Han} proved $H_{\overline{\lambda},\alpha,\theta}$ satisfies $(3,\epsilon_0,\frac{\epsilon_1}{5})$-almost localization with $0<\epsilon_0<C^{-1}\epsilon_1$ and $\epsilon_1=\mathcal{L}_{\overline{\lambda}}$, where $C>0$ is some absolute constant and $\lambda\in\mathrm{II}$. Thus it suffices to consider the case  $0<\beta(\alpha)<\infty$ in the following.
%\end{remark}

Throughout this section we fix
$$\epsilon_0=\frac{\mathcal{L}_{\overline{\lambda}}}{10^5}\geq100X\beta(\alpha)>0,$$
where $X\geq 100$ is any  absolute constant.

We can now state the main result of this section.
\begin{theorem}\label{aalt}
Supposing $0<\beta(\alpha)<\infty,\lambda\in\mathrm{II}$ and $\mathcal{L}_{\overline{\lambda}}\geq 10^4\epsilon_0$ , then $H_{\overline{\lambda},\alpha,\theta}$ satisfies $(3,\epsilon_0,\frac{\mathcal{L}_{\overline{\lambda}}}{100})$-almost localization.
\end{theorem}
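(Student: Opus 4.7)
The plan is to adapt the almost localization framework of Avila--Jitomirskaya (from the AMO/Ten Martini setting) to the Jacobi-type operator $H_{\overline{\lambda},\alpha,\theta}$, using the fact that $\overline{\lambda}\in\mathrm{I}$, so that $\mathcal{L}_{\overline{\lambda}}>0$ by the lemma cited from Jitomirskaya--Marx, and the renormalized cocycle $(\alpha,\overline{A}_{\overline{\lambda},E})$ is well defined with positive Lyapunov exponent in a strip.

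First, I would prove a large deviation theorem (LDT) for the transfer matrices of $(\alpha,\overline{A}_{\overline{\lambda},E})$ in a uniform form across $E$: for each large $n$,
\begin{equation*}
\mathrm{meas}\Bigl\{x\in\mathbb{R}/\mathbb{Z}:\bigl|\tfrac{1}{n}\ln\|\overline{A}_{\overline{\lambda},E,n}(x)\|-\mathcal{L}_{\overline{\lambda}}\bigr|>\tfrac{\mathcal{L}_{\overline{\lambda}}}{10^3}\Bigr\}\leq e^{-c n}.
\end{equation*}
Since $\alpha$ is only assumed to satisfy $\beta(\alpha)<\infty$, the usual Bourgain--Goldstein--Schlag style LDT must be stated in a form sensitive to denominators $q_n$ of the continued fraction expansion; at scales $n$ comparable to $q_n$, one gets an LDT with losses controlled by $\epsilon_0$, which is why the hypothesis $\mathcal{L}_{\overline{\lambda}}\geq 10^4\epsilon_0$ enters. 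This LDT, together with the Thouless formula / uniform upper semi-continuity, then gives that on most intervals the transfer matrix norm equals $e^{n\mathcal{L}_{\overline{\lambda}}(1+o(1))}$.

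Next, following Avila--Jitomirskaya, I would define a point $y\in\mathbb{Z}$ to be $(\kappa,n)$-\emph{regular} for the finite-volume operator on an interval $I=[x_1,x_2]\ni y$ of length $n$ with $|y-x_i|\geq n/3$, provided the Green's function of $\overline{H}_{\overline{\lambda},\alpha,\theta}-E$ on $I$ satisfies
\begin{equation*}
|G_I(y,x_i)|\leq e^{-\kappa|y-x_i|},\qquad i=1,2.
\end{equation*}
The Cramer's-rule formula expresses $G_I(y,x_i)$ as a quotient of transfer matrix entries, so by the LDT plus the standard argument controlling the denominator (the determinant at $E$) via the rotation number and an avoidance of small denominators, every $y$ with phase $\theta+y\alpha$ well separated from the singular set ends up being regular with $\kappa=\mathcal{L}_{\overline{\lambda}}/50$. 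The precise arithmetic obstruction to regularity is exactly the $\epsilon_0$-resonance condition: if $j\in\{0,n_1,n_2,\ldots\}$ is an $\epsilon_0$-resonance then $\theta+j\alpha$ is close to $-\theta$, which for the reflected cocycle produces a small denominator, while if $j$ lies strictly between consecutive resonances with $3|n_i|<|j|<|n_{i+1}|/3$ then by the minimality of $\|2\theta-n_i\alpha\|$ among $|k|\leq|n_i|$ together with the three-distance theorem, $j$ can be placed in the bulk of a regular interval of length comparable to $|j|$.

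Finally, I would feed this back into the block-resolvent (Poisson) identity
\begin{equation*}
u_y=-|c|(x_1+(y-1)\alpha)\,G_I(y,x_1)\,u_{x_1-1}-|c|(x_2+y\alpha)\,G_I(y,x_2)\,u_{x_2+1},
\end{equation*}
valid for any solution of $\overline{H}_{\overline{\lambda},\alpha,\theta}u=Eu$ and any $y$ in a regular interval $I$. Combining the Green's function bound with the a priori polynomial control $|u_k|\leq 1+|k|$ gives $|u_y|\leq C_\star e^{-(\mathcal{L}_{\overline{\lambda}}/100)|y|}$ in the range $3|n_i|<|y|<|n_{i+1}|/3$, after accounting for a polynomial prefactor coming from $|u_{x_i\pm 1}|$ and from the conjugation $Q_\lambda$ between $H_{\overline{\lambda},\alpha,\theta}$ and $\overline{H}_{\overline{\lambda},\alpha,\theta}$. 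This yields the $(3,\epsilon_0,\mathcal{L}_{\overline{\lambda}}/100)$-almost localization with the constant $C_0=3$ coming from the regularity window.

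The main obstacle I anticipate is the Green's function bound on intervals whose length is close to a resonance scale $q_n$: the Liouvillean nature of $\alpha$ means one cannot throw away the resonant $k$ with $\|k\alpha\|$ small, and the determinant in Cramer's rule may degenerate. Managing this requires carefully picking the length of $I$ relative to the denominators $q_n$ of $\alpha$ and to the position of $y$ between the $\epsilon_0$-resonances $n_i$; it is exactly here that the quantitative assumption $\mathcal{L}_{\overline{\lambda}}\geq 10^4\epsilon_0\gg\beta(\alpha)$ is used to absorb all $\beta(\alpha)$-sized losses into the exponent $\mathcal{L}_{\overline{\lambda}}/100$. The other technical point, specific to EHM compared to AMO, is to verify that the renormalization map $Q_\lambda$ and the factors $|c|$ appearing in the Poisson formula do not spoil the exponential bounds — this is where the analyticity of $Q_\lambda$ in the strip $|\Im x|\leq\mathcal{L}_{\overline{\lambda}}/(4\pi)$ is essential.
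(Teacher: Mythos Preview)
Your proposal takes a genuinely different route from the paper, and it has a real gap at the step where you pass from measure--theoretic control to pointwise control of the Green's function denominator.

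The paper does \emph{not} use a large deviation theorem. Instead it follows the Lagrange interpolation (``$\gamma$-uniformity'') scheme of Avila--Jitomirskaya. The key object is $P_k(\theta)=\det(H_{\overline\lambda,\alpha,\theta}^{[0,k-1]}-E)$, which is a polynomial of degree $k$ in $\cos 2\pi(\theta+\tfrac{k-1}{2}\alpha)$. One chooses two blocks $I_1\ni 0$ and $I_2\ni y$ of total length $6sq_n$ (with $s,q_n$ tied to the continued fraction of $\alpha$), proves via a small-divisor computation on $\prod |\cos 2\pi\theta_i-\cos 2\pi\theta_j|$ that the $6sq_n$ points $\{\theta+j\alpha\}_{j\in I_1\cup I_2}$ are $100\epsilon_0$-uniform, and then invokes the fact that if all these points lay in the sublevel set $\mathcal{A}_{k,\widetilde{\mathcal{L}}-101\epsilon_0}=\{|P_k|\le e^{(k+1)(\widetilde{\mathcal{L}}-101\epsilon_0)}\}$ the sequence could not be $100\epsilon_0$-uniform. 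Since the points from $I_1$ \emph{are} forced into $\mathcal{A}$ by the normalization $u_0=1$ and Poisson's identity, some $j_0\in I_2$ must satisfy $|P_k(\theta+(j_0-3sq_n+1)\alpha)|\ge e^{(k+1)(\widetilde{\mathcal{L}}-101\epsilon_0)}$. This is the required \emph{pointwise} lower bound on the denominator in Cramer's rule; the Green's function decay and the final estimate $|u_y|\le e^{-\mathcal{L}_{\overline\lambda}y/100}$ follow by the same Poisson identity step you outline. The Liouvillean arithmetic enters only through the uniformity estimate (Lemmas on $\|2\theta+(i+j)\alpha\|$ and $\|(i-j)\alpha\|$), where losses of size $O(\beta(\alpha)sq_n)$ are absorbed because $\epsilon_0\ge 100X\beta(\alpha)$.

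In your outline, the LDT you invoke is only a measure statement: it says $\|\overline A_{k}(x)\|\approx e^{k\mathcal{L}_{\overline\lambda}}$ off an exponentially small set. But for almost localization you must lower-bound $|P_k|$ at the \emph{specific} phase $\theta+x_1\alpha$ determined by $y$ and $\theta$. Your sentence ``the standard argument controlling the denominator via the rotation number and an avoidance of small denominators'' does not supply such a mechanism; indeed, producing that pointwise lower bound is exactly the hard step, and it is what the uniformity/interpolation argument is designed to deliver. A secondary issue is that a Bourgain--Goldstein--Schlag LDT valid for $\beta(\alpha)>0$ with losses merely of order $\epsilon_0$ is not a standard result; you would have to prove it, and the known proofs in the Liouvillean regime essentially reduce to the same interpolation machinery. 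Finally, note that the paper works directly with $H_{\overline\lambda,\alpha,\theta}$ (coefficients $c,\overline c$ in Poisson's identity), not with the renormalized $\overline H$; the $Q_\lambda$ conjugation plays no role in this section.
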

\begin{remark}
 If $\theta$ is $\epsilon_0$-non-resonant, then $H_{\overline{\lambda},\alpha,\theta}$ satisfies the Anderson localization (i.e.,  $H_{\overline{\lambda},\alpha,\theta}$ has pure point spectrum with exponentially localized states).
\end{remark}

We need some lemmata.
\begin{lemma}\label{aall1}
Let $0<\beta(\alpha)<\infty$ and $\{n_j\}$ be the set of  all  $\epsilon_0$-resonances of $\theta\in\mathbb{R}$. Then
\begin{enumerate}
 \item [$\mathrm{(i)}$] for any $k\in\mathbb{Z}$, one has
 \begin{equation}\label{aale1}
 \min\limits_{0<|j|\leq|k|}||j\alpha||_{\mathbb{R}/\mathbb{Z}}\geq C(\alpha)e^{-\frac{11\beta(\alpha)}{10}|k|},
 \end{equation}
 and for $|k|\geq k_0(\alpha)>0$
  \begin{equation}\label{aale2}
  \min\limits_{0<|j|\leq|k|}||j\alpha||_{\mathbb{R}/\mathbb{Z}}\geq e^{-\frac{10\beta(\alpha)}{9}|k|},
 \end{equation}
 where $C(\alpha)$ and $k_0(\alpha)$ are the positive constants which depend only on $\alpha$;
 \item[$\mathrm{(ii)}$] if $|k|\geq k_0(\alpha)>0$, $k$ is an $\epsilon_0$-resonance of $\theta$ if and only if $$||2\theta-k\alpha||_{\mathbb{R}/\mathbb{Z}}\leq e^{-\epsilon_0|k|};$$
 \item[$\mathrm{(iii)}$] for $|n_j|>n(\alpha)>0$, one has
     \begin{equation}\label{nN}
     ||2\theta-n_j\alpha||_{\mathbb{R}/\mathbb{Z}}\geq e^{-2.5|n_{j+1}|\beta(\alpha)}
     \end{equation}
   and
  \begin{equation}\label{aale3}
     40X|n_j|<|n_{j+1}|.
     \end{equation}

 \end{enumerate}
\end{lemma}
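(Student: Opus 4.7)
The plan is to derive all three parts from classical Diophantine properties of the continued fraction expansion of $\alpha$, built on the characterisation $\beta(\alpha) = \limsup_{n\to\infty}(\ln q_{n+1})/q_n$ and the best-approximation property of the denominators $q_n$. For $\mathrm{(i)}$, I would first recall that when $q_n \leq |k| < q_{n+1}$,
\[
\min\limits_{0 < |j| \leq |k|}\|j\alpha\|_{\mathbb{R}/\mathbb{Z}} = \|q_n\alpha\|_{\mathbb{R}/\mathbb{Z}} \geq \frac{1}{2q_{n+1}}.
\]
The definition of $\beta(\alpha)$ ensures that for every $\eta > 0$ there exists $N(\eta)$ with $q_{n+1} \leq e^{(\beta(\alpha)+\eta)q_n}$ whenever $n \geq N(\eta)$; choosing $\eta$ so small that $\beta(\alpha)+\eta \leq \tfrac{10}{9}\beta(\alpha)$ and using $q_n \leq |k|$ yields (\ref{aale2}) once $|k| \geq k_0(\alpha)$. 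The softer bound (\ref{aale1}) follows by absorbing the finitely many exceptional $n < N(\eta)$ into a constant $C(\alpha)$ at the cost of the slightly larger exponent $\tfrac{11}{10}\beta(\alpha)$.

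For $\mathrm{(ii)}$, the ``only if'' direction is immediate from the definition of resonance. For the ``if'' direction, I would assume $\|2\theta - k\alpha\|_{\mathbb{R}/\mathbb{Z}} \leq e^{-\epsilon_0|k|}$ and suppose towards contradiction that some $k' \neq k$ with $|k'| \leq |k|$ satisfies $\|2\theta - k'\alpha\|_{\mathbb{R}/\mathbb{Z}} \leq \|2\theta - k\alpha\|_{\mathbb{R}/\mathbb{Z}}$. The triangle inequality then gives $\|(k-k')\alpha\|_{\mathbb{R}/\mathbb{Z}} \leq 2 e^{-\epsilon_0|k|}$, while (\ref{aale1}) applied to $k-k'$ (with $|k-k'| \leq 2|k|$) gives $\|(k-k')\alpha\|_{\mathbb{R}/\mathbb{Z}} \geq C(\alpha) e^{-\frac{11}{5}\beta(\alpha)|k|}$. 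Since $\epsilon_0 \geq 100X\beta(\alpha) \gg \tfrac{11}{5}\beta(\alpha)$, these two bounds are incompatible once $|k| \geq k_0(\alpha)$, forcing $k$ itself to be the argmin on $\{|j| \leq |k|\}$.

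Finally, for $\mathrm{(iii)}$ both estimates come from the same triangle-inequality trick applied to the pair $(n_j, n_{j+1})$. Since $n_{j+1}$ is itself a resonance, $\|2\theta - n_{j+1}\alpha\|_{\mathbb{R}/\mathbb{Z}} \leq e^{-\epsilon_0|n_{j+1}|}$, and (\ref{aale2}) yields $\|(n_{j+1} - n_j)\alpha\|_{\mathbb{R}/\mathbb{Z}} \geq e^{-\frac{10}{9}\beta(\alpha)(|n_j| + |n_{j+1}|)}$. Combining these,
\[
\|2\theta - n_j\alpha\|_{\mathbb{R}/\mathbb{Z}} \geq \tfrac{1}{2}e^{-\frac{10}{9}\beta(\alpha)(|n_j|+|n_{j+1}|)} \geq e^{-2.5\beta(\alpha)|n_{j+1}|}
\]
once $|n_{j+1}|$ is large enough, using $|n_j| \leq |n_{j+1}|$ and $\tfrac{20}{9} < 2.5$; this is (\ref{nN}). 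For (\ref{aale3}) I would pair the same lower bound with $\|2\theta - n_j\alpha\|_{\mathbb{R}/\mathbb{Z}} \leq e^{-\epsilon_0|n_j|}$, obtaining $\epsilon_0|n_j| \leq \tfrac{10}{9}\beta(\alpha)(|n_j|+|n_{j+1}|) + \ln 2$, and rearrange using $\epsilon_0 \geq 100X\beta(\alpha)$ and $|n_j| > n(\alpha)$ to reach $|n_{j+1}| \geq 40X|n_j|$.

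The main obstacle is purely bookkeeping: choosing the threshold constants $C(\alpha), k_0(\alpha), n(\alpha)$ consistently so that the finitely many small-index exceptions are absorbed, and verifying that the exponents $\tfrac{10}{9}, \tfrac{11}{10}, 2.5, 40X$ all fit together cleanly. No step is conceptually deep beyond the continued-fraction input already recorded in Section 2.6.
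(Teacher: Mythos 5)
Your proof is correct and fills in exactly the continued-fraction details that the paper itself omits (the paper asserts (i) follows from the definition of $\beta(\alpha)$ and cites \cite{LYJFG} for (ii) and (iii)); the triangle-inequality mechanism you use for (ii) and (iii) is the same one underlying the cited argument. One small phrasing slip: since $\tfrac{11}{10} < \tfrac{10}{9}$, (\ref{aale1}) actually has the \emph{smaller} exponent (hence the \emph{stronger} lower bound) compared to (\ref{aale2}); the trade-off is the multiplicative constant $C(\alpha)$ and validity for all $k$ rather than only large $k$, not a worse exponent — this does not affect the substance of your argument, since you correctly absorb the finitely many small-$n$ cases into $C(\alpha)$.
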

\begin{proof} (i) (\ref{aale1}) and  (\ref{aale2}) follow from  (\ref{beta}) directly.

The proofs of (ii) and (iii) are similar to that in \cite{LYJFG}  and we omit the details here.
\end{proof}

We recall some basic facts about the Green's function. For any interval $[x_1,x_2]\subset\mathbb{Z}$, we define $H_{\overline{\lambda},\alpha,\theta}^{[x_1,x_2]}$ as the restriction of $H_{\overline{\lambda},\alpha,\theta}$ on $[x_1,x_2]$. We can regard $H_{\overline{\lambda},\alpha,\theta}^{[x_1,x_2]}$ as a finite order matrix with entries $H_{\overline{\lambda},\alpha,\theta}^{[x_1,x_2]}(x,y)$ when we choose the standard basis $\{\delta_i\}_{i\in[x_1,x_2]}$ in $\ell^2(\mathbb{Z}^{[x_1,x_2]})$. If $E$ is not an eigenvalue of $H_{\overline{\lambda},\alpha,\theta}^{[x_1,x_2]}$, we let $G_{[x_1,x_2]}^{E}$ be the inverse of $H_{\overline{\lambda},\alpha,\theta}^{[x_1,x_2]}-E:=H_{\overline{\lambda},\alpha,\theta}^{[x_1,x_2]}-E\cdot I$, where $I$ is the identity matrix. For $k>0, k\in\mathbb{Z}$, we set $P_{k}(\theta)=\det(H_{\overline{\lambda},\alpha,\theta}^{[0,k-1]}-E)$. By a straightforward computation using Cramer's rule, for any $x_1<y<x_2$ with $x_2-x_1+1=k$, one has
\begin{eqnarray}
\label{aale4}&&|G_{[x_1,x_2]}^{E}(x_1,y)|=\left|\frac{P_{x_2-y}(\theta+(y+1)\alpha)}{P_k(\theta+x_1\alpha)}\right|\cdot\prod_{j=x_1}^{y-1}|c(\theta+j\alpha)|,\\
\label{aale5}&&|G_{[x_1,x_2]}^{E}(y,x_2)|=\left|\frac{P_{y-x_1}(\theta+x_1\alpha)}{P_k(\theta+x_1\alpha)}\right|\cdot\prod_{j=y+1}^{x_2}|c(\theta+j\alpha)|,
\end{eqnarray}
where $c(\theta)=c_{\overline{\lambda}}(\theta)$.

If $H_{\overline{\lambda},\alpha,\theta}u=Eu$, then we have  for $x\in[x_1,x_2]$
\begin{equation}\label{aale6}
u_x=\overline{c}(\theta+(x_1-1)\alpha)G_{[x_1,x_2]}^E(x_1,x)u_{x_1-1}+c(\theta+x_2\alpha)G_{[x_1,x_2]}^E(x,x_2)u_{x_2+1},
\end{equation}
where $\overline{c}(\theta)=\overline{c}_{\overline{\lambda}}(\theta)$. We call (\ref{aale6}) the Poisson's identity.

Letting $M_{\overline{\lambda}}(\theta)=c_{\overline{\lambda}}(\theta)A_{\overline{\lambda},E}(\theta)$ and denoting by $M_{\overline{\lambda},k}(\theta)$  its $k$-step transfer matrix, then we have
\begin{equation*}
M_{\overline{\lambda},k}(\theta)=\left[\begin{array}{cc}P_k(\theta)&-\overline{c}(\theta-\alpha)P_{k-1}(\theta+\alpha)\\
{c}(\theta+(k-1)\alpha)P_{k-1}(\theta)&-\overline{c}(\theta-\alpha)c(\theta+(k-1)\alpha)P_{k-2}(\theta+\alpha)\end{array}\right].
\end{equation*}
 Assume $\widetilde{\mathcal{L}}_{\overline{\lambda}}$ is the Lyapunov exponent for the cocycle $(\alpha,M_{\overline{\lambda}})$. From \cite{JiH},  for any $\epsilon>0$ there is some $C_{\star}(\epsilon)>0$ (depending only  on $\lambda,\alpha,\epsilon$) such that
\begin{equation}\label{aale7}
|P_{k}(\theta)|\leq C_{\star}(\epsilon)e^{(\widetilde{\mathcal{L}}_{\overline{\lambda}}+\epsilon)k},k>0.
\end{equation}
Note also that
\begin{equation*}
\mathcal{L}_{\overline{\lambda}}=\widetilde{\mathcal{L}}_{\overline{\lambda}}-\mathcal{C}(\overline{\lambda}),
\end{equation*}
where \begin{equation*}
\mathcal{C}(\overline{\lambda})=\ln\frac{\max\{\lambda_1+\lambda_3,1\}+\sqrt{\max\{\lambda_1+\lambda_3,1\}-4\lambda_1\lambda_3}}{2\lambda_2}.
\end{equation*}

In the following of this section, we write $\mathcal{L}=\mathcal{L}_{\overline{\lambda}},\widetilde{\mathcal{L}}=\widetilde{\mathcal{L}}_{\overline{\lambda}}$ for simplicity.

\begin{lemma}[Lemma 5 of \cite{JiH}]\label{aall2} Let $a<b$ with $a,b\in\mathbb{Z}$.
Then for all $\epsilon>0$, there exists some $C(\epsilon)>0$ (depending only on $\epsilon$) such that
\begin{equation}\label{aale8}
\prod\limits_{j=a}^{b}|c(\theta+j\alpha)|\leq C(\epsilon)e^{(b-a)(\mathcal{C}(\overline{\lambda})+\epsilon)}.
\end{equation}

\end{lemma}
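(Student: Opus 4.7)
The plan is to reduce the estimate to controlling Birkhoff sums of $\log|e^{2\pi it}-z_k|$ over the two roots $z_1, z_2$ of the quadratic $\overline\lambda_3 z^2 + \overline\lambda_2 z + \overline\lambda_1$ associated with $c_{\overline\lambda}$, then to identify the correct asymptotic exponent via Jensen's formula (the logarithmic Mahler measure of the polynomial) and to control the Birkhoff deviation by Fourier analysis.

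\textbf{Step 1 (factorization and integration).} With $w := e^{2\pi i(x+\alpha/2)}$, the defining formula for $c_{\overline\lambda}$ rewrites as $c_{\overline\lambda}(x) = \overline\lambda_3\, w^{-1}(w-z_1)(w-z_2)$, so for real $x$,
\begin{equation*}
\log|c_{\overline\lambda}(\theta+j\alpha)| = \log\overline\lambda_3 + \log|w_j - z_1| + \log|w_j - z_2|,\qquad w_j := e^{2\pi i(\theta+(j+1/2)\alpha)}.
\end{equation*}
Jensen's formula $\int_0^1\log|e^{2\pi it}-z|\,dt = \log^+|z| := \log\max(1,|z|)$ then yields $\int_0^1\log|c_{\overline\lambda}|\,dx = \log\overline\lambda_3 + \log^+|z_1| + \log^+|z_2| = \log M(p)$, the logarithmic Mahler measure of $p(z) = \overline\lambda_3 z^2 + \overline\lambda_2 z + \overline\lambda_1$. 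A direct calculation using the quadratic formula for $z_{1,2}$ and the relations $\overline\lambda_1+\overline\lambda_3 = (\lambda_1+\lambda_3)/\lambda_2$, $\overline\lambda_2 = 1/\lambda_2$, $\overline\lambda_1\overline\lambda_3 = \lambda_1\lambda_3/\lambda_2^2$ identifies this integral with $\mathcal{C}(\overline\lambda)$.

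\textbf{Step 2 (uniform Birkhoff error).} It remains to show, for each $k\in\{1,2\}$ and $N := b-a$, the uniform upper bound
\begin{equation*}
\sum_{j=a}^b\log|w_j - z_k| \leq N\log^+|z_k| + N\epsilon + C(\epsilon).
\end{equation*}
For $\overline\lambda\in\mathrm{I}$ in generic position, $|z_k|\neq 1$, so $\phi\mapsto\log|e^{2\pi i\phi}-z_k|-\log^+|z_k|$ admits a Fourier series whose coefficients decay geometrically with ratio $r_k := \min(|z_k|, 1/|z_k|) < 1$ (via $\log|1-\zeta|=-\mathrm{Re}\sum_{n\geq 1}\zeta^n/n$ for $|\zeta|<1$). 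Summing mode by mode, the $n$-th Birkhoff mode is a geometric sum bounded by $\min(N, 1/\|n\alpha\|_{\mathbb{R}/\mathbb{Z}})$, so the total error is majorized by $\sum_{n\neq 0}\tfrac{r_k^{|n|}}{|n|}\min(N, 1/\|n\alpha\|_{\mathbb{R}/\mathbb{Z}})$. Splitting at $|n| = N_0(\epsilon)$ with $N_0$ chosen so that the geometric tail contributes at most $\epsilon N$, and invoking the Liouville estimate $\|n\alpha\|\geq C(\alpha)e^{-(\beta(\alpha)+\epsilon/2)|n|}$ from Lemma \ref{aall1}(i) to bound the finite head by $C(\epsilon)$ independently of $N,\theta,a,b$, delivers the claim. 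The exceptional case $|z_k|=1$ (codimension one in $\overline\lambda$) is handled by approximation $z_k \leadsto (1-\delta)z_k$ together with the pointwise bound $\log|w-z_k|\leq\log 2$.

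The main obstacle is the quantitative interplay in Step 2: for Liouvillean $\alpha$ the small-denominator factors $\|n\alpha\|^{-1}$ can be as large as $e^{\beta(\alpha)|n|}$, so the geometric Fourier decay of $\log|e^{2\pi it}-z_k|$ must dominate this Diophantine loss. The careful choice of truncation level $N_0(\epsilon)$ and the verification that head and tail summations are uniform in $\theta$ and in the starting index $a$ (so that the resulting constant $C(\epsilon)$ does not absorb any factor of $N$) form the technical heart of the argument. Combining Steps 1 and 2 then yields $\prod_{j=a}^b|c_{\overline\lambda}(\theta+j\alpha)| \leq C(\epsilon)e^{N(\mathcal{C}(\overline\lambda)+\epsilon)}$, as desired.
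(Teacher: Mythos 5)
The paper does not prove this lemma: it is quoted verbatim from Lemma~5 of~\cite{JiH} (Jitomirskaya--Koslover--Schulteis), so there is no in-paper argument to compare against.

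Your proposal is a correct and self-contained argument, and it is essentially the standard way such Birkhoff-sum upper bounds are obtained. Step~1 is right: writing $w=e^{2\pi i(x+\alpha/2)}$ and factoring the quadratic $\overline\lambda_3 z^2+\overline\lambda_2 z+\overline\lambda_1$ (equivalently $\lambda_1 z^2+z+\lambda_3$ after clearing $\lambda_2$), Jensen's formula gives $\int_0^1\log|c_{\overline\lambda}|=\log\overline\lambda_3+\log^+|z_1|+\log^+|z_2|$, and a short case check (roots real or complex conjugate, $\lambda_1+\lambda_3\gtrless1$) confirms this equals $\mathcal{C}(\overline\lambda)$. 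Step~2 is also right: when $|z_k|\neq1$ the centered logarithm has geometrically decaying Fourier coefficients, the geometric sum over $j\in[a,b]$ is bounded by $\min(N+1,\tfrac{1}{2\|n\alpha\|})$, and splitting at $N_0=N_0(\epsilon,r_k)$ makes the tail $\leq\epsilon N$ while the head is a finite sum hence an absolute constant $C(\epsilon)$ (depending also on $\alpha,\lambda$, as is implicit in the lemma). A few details are worth sharpening. First, you do not actually need the Liouville lower bound on $\|n\alpha\|$ from Lemma~\ref{aall1}: irrationality of $\alpha$ alone makes the head finite. Second, in the boundary case $|z_k|=1$, the stated pointwise bound $\log|w-z_k|\leq\log2$ does not by itself recover the $\epsilon$-loss (it costs $N\log2$); what works is the sharper pointwise inequality $|w-z_k|\leq\tfrac{2}{1+\rho}\,|w-\rho z_k|$ valid for $|w|=|z_k|=1$ and $\rho\in(0,1)$, whose cost per term $\log\tfrac{2}{2-\delta}\leq\delta$ can be absorbed into the $N\epsilon$ budget after choosing $\delta=\delta(\epsilon)$; this is the correct way to implement your ``approximation'' idea. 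Third, the degenerate cases $\overline\lambda_3=0$ (linear factor only) or $\overline\lambda_1=\overline\lambda_3=0$ (constant) should be dispatched separately, which is routine. With these small patches the proof is complete.
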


%\begin{definition}
%Let $m>0,E\in\Sigma_{\lambda,\alpha}$. We call $y\in\mathbb{Z}$ is $(m,k)$-regular if there is some interval $[x_1,x_2]\subset\mathbb{Z}$ containing $y$ such that
%\begin{equation}
%|G_{[x_1,x_2]}^E(y,x_i)|\leq e^{-m|y-x_i|}\ \mathrm{with}\ |y-x_i|\geq\frac{1}{3}k,\ \ i=1,2,
%\end{equation}
%where $x_2-x_1+1=k>0$ and we call $y$ is $(m,k)$-singular for otherwise.
%\end{definition}
Since $P_k(\theta)$ is a polynomial in $\cos{2\pi(\theta+\frac{k-1}{2}\alpha)}$ of degree $k$ (refer \cite{JiH} for details), we can write $P_{k}(\theta)=Q_k(\cos2\pi(\theta+\frac{k-1}{2}\alpha))$, where $Q_k\in \mathbb{C}[x]$ is a polynomial of degree $k$. Moreover, we define
$\mathcal{A}_{k,r}=\{\theta\in\mathbb{R}:|Q_k(\cos2\pi\theta)|\leq e^{(k+1)r}\}$, where $k\in\mathbb{N},r\in\mathbb{R}$.

\begin{definition}
We say the sequence $\theta_1,\cdots,\theta_{k+1}$ is $\gamma$-uniform if
\begin{equation*}
\max_{x\in[-1,1]}\max_{i=1,\cdots,k+1}\prod_{j=1,j\neq i}^{k+1}\left|\frac{x-\cos2\pi\theta_j}{\cos2\pi\theta_i-\cos2\pi\theta_j}\right|\leq e^{\gamma k}.
\end{equation*}
\end{definition}

%\begin{lemma}[\cite{}]
%Suppose $y\in\mathbb{Z}$ is $(k,\overline{L}-C(\overline{\lambda})-\delta)$-singular. Then for any $\epsilon>0$ and any $x\in\mathbb{Z}$ with $y-\frac{2k}{3}\leq x\leq y-\frac{k}{3}$, one has $\theta+(x+\frac{k-1}{2})\alpha\in A_{k,\overline{L}-\frac{\delta}{3}+\epsilon}$ for $k>k(\delta,\epsilon,\lambda)$.
%\end{lemma}

\begin{lemma}[Lemma 9.7 of \cite{Ten}]\label{aall3}
Let $\alpha\in\mathbb{R}\setminus\mathbb{Q}$. Then there exists an absolute constant $\widetilde{C}>0$ such that
\begin{equation}\label{aale9}
-\widetilde{C}\ln q_n\leq\sum_{j=0,j\neq j_0(x)}^{q_n-1}\ln|\sin\pi(x+j\alpha)|+(q_n-1)\ln2\leq \widetilde{C}\ln q_n,
\end{equation}
where $j_0(x)\in\{0,\cdots,q_n-1\}$ satisfies $|\sin\pi(x+j_0(x)\alpha)|=\min\limits_{0\leq l\leq q_n-1}|\sin\pi(x+l\alpha)|$.
\end{lemma}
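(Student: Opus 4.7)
The plan is to replace the irrational shift $\alpha$ by the rational convergent $p_n/q_n$, exploit a clean product identity for the rational case, and then control the error caused by the approximation. Throughout, set $\theta_j=x+j\alpha$ and $\widetilde\theta_j=x+jp_n/q_n$.

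First I would establish the cyclotomic identity
\begin{equation*}
\prod_{j=0}^{q_n-1}2|\sin\pi(x+jp_n/q_n)|=2|\sin\pi q_n x|.
\end{equation*}
This follows from $\prod_{k=0}^{q_n-1}(z-e^{2\pi i k/q_n})=z^{q_n}-1$ with $z=e^{2\pi i x}$, together with the fact that $\gcd(p_n,q_n)=1$ implies $\{jp_n\bmod q_n:0\le j\le q_n-1\}$ is a permutation of $\{0,1,\ldots,q_n-1\}$. Taking logarithms,
\begin{equation*}
\sum_{j=0}^{q_n-1}\ln|\sin\pi\widetilde\theta_j|+(q_n-1)\ln 2=\ln|\sin\pi q_n x|\;\in(-\infty,\ln 2].
\end{equation*}
Let $\widetilde j_0=\widetilde j_0(x)$ be the index minimizing $|\sin\pi\widetilde\theta_j|$; since the points $\widetilde\theta_j$ form a coset of $\frac1{q_n}\mathbb{Z}/\mathbb{Z}$, once the nearest one to $\mathbb{Z}$ is removed the remainder looks like a translate of $\{k/q_n:1\le k\le q_n-1\}$, and the elementary computation $\prod_{k=1}^{q_n-1}2\sin(\pi k/q_n)=q_n$ gives
\begin{equation*}
\Bigl|\sum_{j\ne \widetilde j_0}\ln|\sin\pi\widetilde\theta_j|+(q_n-1)\ln 2\Bigr|\le C\ln q_n,
\end{equation*}
with an explicit comparison to the regular product.

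Next I would transfer this bound to the $\alpha$-sum. The best approximation bound $|\alpha-p_n/q_n|\le 1/(q_nq_{n+1})$ yields $|\theta_j-\widetilde\theta_j|\le 1/q_{n+1}$ uniformly for $0\le j\le q_n-1$. A three-distance-theorem / pigeonhole argument shows that, excluding the single closest term, the points $\theta_j\bmod 1$ are separated from $0$ by at least $c/q_n$ for some absolute $c>0$; the same holds for $\widetilde\theta_j$ after removing $\widetilde j_0$, and one checks that $\widetilde j_0=j_0(x)$ for $n$ large (since the two sets differ pointwise by $O(1/q_{n+1})\ll 1/q_n$). On this set $\ln|\sin\pi\cdot|$ is Lipschitz with constant $O(q_n)$, so
\begin{equation*}
\sum_{j\ne j_0(x)}\bigl|\ln|\sin\pi\theta_j|-\ln|\sin\pi\widetilde\theta_j|\bigr|\le C\,q_n\cdot q_n\cdot\frac{1}{q_{n+1}}\le C,
\end{equation*}
since $q_n^2/q_{n+1}\le q_n/(q_n a_{n+1})\le 1$. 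Combining this with the bound on the rational sum completes the proof.

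The main obstacle I expect is the matching of the two minimizing indices $j_0(x)$ and $\widetilde j_0(x)$, and justifying that away from that single removed term one really has the lower bound $|\sin\pi\theta_j|\ge c/q_n$; this requires the separation estimate $\|k\alpha\|_{\mathbb{R}/\mathbb{Z}}\ge\|q_{n-1}\alpha\|_{\mathbb{R}/\mathbb{Z}}\ge 1/(2q_n)$ for $0<|k|<q_n$ together with a careful splitting of the $q_n$ points into two monotone sequences on either side of $j_0(x)$, after which the bound $\ln|\sin\pi\theta_j|\ge \ln(c\,r/q_n)$ for the $r$-th closest point gives the required $\widetilde C\ln q_n$ envelope through a telescoping comparison with $\sum_{k=1}^{q_n-1}\ln\sin(\pi k/q_n)$.
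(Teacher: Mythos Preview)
The paper does not prove this lemma --- it is quoted verbatim from \cite{Ten} without argument --- so there is no in-paper proof to compare against. Your overall strategy (replace $\alpha$ by $p_n/q_n$, use the cyclotomic identity, then transfer) is the standard one and is correct in outline, but the transfer step as written contains a genuine error.

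The problem is the inequality $q_n^2/q_{n+1}\le 1$. You justify it by ``$q_n^2/q_{n+1}\le q_n/(q_n a_{n+1})\le 1$'', but this algebra is wrong: from $q_{n+1}=a_{n+1}q_n+q_{n-1}\le (a_{n+1}+1)q_n$ one gets $q_n^2/q_{n+1}\ge q_n/(a_{n+1}+1)$, which is of order $q_n$ whenever $a_{n+1}$ is bounded (for instance $\alpha$ the golden mean, where every $a_k=1$). Thus the uniform-Lipschitz estimate --- Lipschitz constant $O(q_n)$, times $q_n-1$ terms, times displacement $O(1/q_{n+1})$ --- yields only $O(q_n)$, not $O(1)$, and the argument as stated collapses.

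The repair is exactly the non-uniform version of the ordering idea you sketch in your final paragraph, but it is needed to fix the Lipschitz step itself, not merely the $j_0$-matching. Order the points $\theta_j$ ($j\ne j_0$) by distance to $\mathbb{Z}$; since all gaps between consecutive $\theta_j$ on the circle lie in $[1/(2q_n),2/q_n]$ (from $\|k\alpha\|_{\mathbb{R}/\mathbb{Z}}\ge\|q_{n-1}\alpha\|_{\mathbb{R}/\mathbb{Z}}\ge 1/(2q_n)$ for $0<|k|<q_n$, and the total length $1$), the $r$-th closest satisfies $\|\theta_j\|_{\mathbb{R}/\mathbb{Z}}\asymp r/q_n$. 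Near such a point $\bigl|\tfrac{d}{d\theta}\ln|\sin\pi\theta|\bigr|=\pi|\cot\pi\theta|=O(q_n/r)$, so the $r$-th term contributes error $O\bigl((q_n/r)\cdot q_{n+1}^{-1}\bigr)$, and summing over $r=1,\dots,q_n-1$ gives
\[
O\Bigl(\frac{q_n}{q_{n+1}}\ln q_n\Bigr)\le O(\ln q_n).
\]
This closes the gap; the crude uniform bound does not. The $j_0$-matching you flag is a secondary issue: if $j_0\ne\widetilde j_0$ then, because the $\widetilde\theta_j$ are exactly $1/q_n$-spaced and only one of them can lie within $1/(2q_n)$ of $\mathbb{Z}$, one necessarily has $\|\widetilde\theta_{\widetilde j_0}\|_{\mathbb{R}/\mathbb{Z}}\gtrsim 1/q_n$, so the ``wrong'' removed term contributes at most $O(\ln q_n)$ anyway.
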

From (\ref{aale3}), we have $3|n_j|<\frac{|n_{j+1}|}{3}$. Without loss of generality, we can assume $3|n_j|<y<\frac{|n_{j+1}|}{3}$ . We select $q_{n+1}> \frac{y}{8}\geq q_{n}$ and let $s$ be the largest positive integer such that $sq_n\leq \frac{y}{8}$. Then $(s+1)q_n>\frac{y}{8}$. We define intervals $I_1,I_2\subset\mathbb{Z}$ as
\begin{eqnarray*}
&&I_1=[-2sq_n+1,0],\ I_2=[y-2sq_n+1,y+2sq_n],\ \ \mathrm{for}\ n_j>0,\\
&&I_1=[0,2sq_n+1],\ I_2=[y-2sq_n+1,y+2sq_n],\ \ \mathrm{for}\ n_j\leq0.
\end{eqnarray*}

\begin{lemma}\label{aall4} Let $0<\beta(\alpha)<\infty$. Then
 \begin{enumerate}
\item[$\mathrm{(i)}$] for any $x\in\mathbb{R},0<|j|<q_{n+1}$, one has for $n>n(\alpha)$
\begin{equation}\label{aale10}
\max\{\ln|\sin x|,\ln|\sin(x+\pi j\alpha)|\}\geq -2\beta(\alpha) q_n;
\end{equation}
\item[$\mathrm{(ii)}$] for any $i+j\neq n_j$ and $|i+j|<n_{j+1}$ with $i,j\in I_1\cup I_2$, one has for $n>n(\alpha)$
\begin{equation}\label{aale11}
||2\theta+(i+j)\alpha||_{\mathbb{R}/\mathbb{Z}}\geq e^{-36\epsilon_0 sq_n }.
\end{equation}
\end{enumerate}
\end{lemma}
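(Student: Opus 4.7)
The plan is to reduce both estimates to standard diophantine inequalities and then invoke Lemma \ref{aall1} together with the $\epsilon_0$-resonance structure of $\theta$.

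For part (i), I argue by contradiction. Suppose both $|\sin x|$ and $|\sin(x+\pi j\alpha)|$ are smaller than $e^{-2\beta(\alpha)q_n}$. Since $|\sin y|\geq (2/\pi)\,\mathrm{dist}(y,\pi\mathbb{Z})$, this forces $x$ and $x+\pi j\alpha$ to each lie within $(\pi/2)e^{-2\beta(\alpha)q_n}$ of $\pi\mathbb{Z}$, so that $||j\alpha||_{\mathbb{R}/\mathbb{Z}}\leq Ce^{-2\beta(\alpha)q_n}$. However, for $0<|j|<q_{n+1}$ the continued-fraction bound gives $||j\alpha||_{\mathbb{R}/\mathbb{Z}}\geq ||q_n\alpha||_{\mathbb{R}/\mathbb{Z}}\geq 1/(2q_{n+1})$, and the $\limsup$ definition of $\beta(\alpha)$ yields $q_{n+1}\leq e^{(\beta(\alpha)+\epsilon)q_n}$ for all sufficiently large $n$. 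Since $\beta(\alpha)>0$, choosing $0<\epsilon<\beta(\alpha)$ makes the two bounds incompatible once $n\geq n(\alpha)$.

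For part (ii), I pivot with $n_j$ via the triangle inequality in $\mathbb{R}/\mathbb{Z}$,
\[
||2\theta+(i+j)\alpha||_{\mathbb{R}/\mathbb{Z}} \;\geq\; ||((i+j)+n_j)\alpha||_{\mathbb{R}/\mathbb{Z}}-||2\theta-n_j\alpha||_{\mathbb{R}/\mathbb{Z}},
\]
where the sign-dependent definitions of $I_1,I_2$ are arranged precisely so that the hypothesis $i+j\neq n_j$ rules out $(i+j)+n_j=0$. The rough bounds $|i|,|j|\leq y+2sq_n$, $|n_j|\leq y/3$, and $y<8(s+1)q_n$ yield $|(i+j)+n_j|\leq Csq_n$ for an absolute constant $C$, so Lemma \ref{aall1}(i) provides a lower bound of the form $e^{-C'\beta(\alpha)sq_n}$ on the first term. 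For the resonance term I use $||2\theta-n_j\alpha||_{\mathbb{R}/\mathbb{Z}}\leq e^{-\epsilon_0|n_j|}$ directly from the definition of resonance, and the hypothesis $\epsilon_0\geq 100X\beta(\alpha)$ with $X\geq 100$ makes $C'\beta(\alpha)sq_n$ a very small fraction of $\epsilon_0 sq_n$. Absorbing the subtracted term and consolidating constants then yields the claimed bound $e^{-36\epsilon_0 sq_n}$ with comfortable margin.

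The main technical obstacle is bookkeeping across the cases $(i,j)\in I_1\times I_1$, $I_1\times I_2$, $I_2\times I_2$ and the two sign cases $n_j>0$ and $n_j\leq 0$: one must verify that the hypothesis $i+j\neq n_j$ together with the explicit placement of $I_1,I_2$ genuinely prevents the cancellation $(i+j)+n_j=0$ in every case, and track the numerical constants so that they consolidate into the stated factor $36$ in the exponent. No new analytic input is required beyond Lemma \ref{aall1} and the definition of $\epsilon_0$-resonance.
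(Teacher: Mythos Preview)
Your argument for part (i) is correct and matches the paper's approach: both reduce to the continued-fraction lower bound $\|j\alpha\|_{\mathbb{R}/\mathbb{Z}}\geq 1/(2q_{n+1})$ combined with $q_{n+1}\leq e^{(1+o(1))\beta(\alpha)q_n}$.

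Part (ii), however, has a genuine gap. You pivot once via $n_j$ and bound the subtracted term by $\|2\theta-n_j\alpha\|_{\mathbb{R}/\mathbb{Z}}\leq e^{-\epsilon_0|n_j|}$. But the exponent here is $\epsilon_0|n_j|$, not $\epsilon_0 sq_n$, and there is no lower bound on $|n_j|$ in terms of $sq_n$: since $y$ ranges up to $|n_{j+1}|/3$ and $sq_n\sim y/8$, the ratio $sq_n/|n_j|$ can be arbitrarily large. In that regime your main term $e^{-C'\beta(\alpha)sq_n}$ is \emph{smaller} than the subtracted term $e^{-\epsilon_0|n_j|}$, and the triangle inequality yields nothing. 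Your sentence ``$C'\beta(\alpha)sq_n$ is a very small fraction of $\epsilon_0 sq_n$'' is true but irrelevant, since the subtracted bound involves $|n_j|$, not $sq_n$.

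The paper avoids this by a case split on the minimizer $k_0$ of $\|2\theta-k\alpha\|_{\mathbb{R}/\mathbb{Z}}$ over $|k|\leq|i+j|$. If $k_0\neq i+j$, one pivots via $k_0$ and may freely assume $\|2\theta-k_0\alpha\|$ is as small as needed. If $k_0=i+j$ and $-k_0$ is not a resonance, the non-resonance condition gives the bound directly. If $-k_0$ is a resonance, the hypothesis $|i+j|<|n_{j+1}|$ forces $|k_0|\leq|n_j|$; one may then assume $\|2\theta-n_j\alpha\|<e^{-36\epsilon_0 sq_n}$ (otherwise done, since $\|2\theta-n_{j'}\alpha\|\geq\|2\theta-n_j\alpha\|$ for $j'\leq j$), and the pivot via $n_j$ now involves $|k_0+n_j|\leq 2|n_j|$ rather than $Csq_n$, so the diophantine term dominates. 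Your single-pivot shortcut misses this structure.
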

\begin{proof}

 (i)  Firstly, we have for $n>n(\alpha)$
\begin{equation}\label{aale12}
\min\limits_{0<|j|<q_{n+1}}||j\alpha||_{\mathbb{R}/\mathbb{Z}}=||q_n\alpha||_{\mathbb{R}/\mathbb{Z}}\geq\frac{1}{2q_{n+1}}\geq e^{-\frac{11\beta(\alpha) q_n}{10}}.
\end{equation}
We may assume $|\sin x|<e^{-2\beta(\alpha) q_n}<\frac{1}{2}$. Then for all $j$ satisfying $0<|j|<q_{n+1}$, we get
\begin{eqnarray*}
|\sin(x+\pi j\alpha)|&=&|\sin x\cos\pi j\alpha+\cos x\sin\pi j\alpha|\\
&\geq& \frac{\sqrt{3}}{2}|\sin\pi j\alpha|-e^{-2\beta(\alpha)q_n}\\
&\geq& {\sqrt{3}}||j\alpha||_{\mathbb{R}/\mathbb{Z}}-e^{-2\beta(\alpha) q_n}.
\end{eqnarray*}
Thus recalling (\ref{aale12}), we have
\begin{equation*}
|\sin(x+\pi j\alpha)|\geq e^{-2\beta(\alpha) q_n}.
\end{equation*}
We complete the proof of (\ref{aale10}).

(ii) From the definitions of $s,q_n$ and $I_1,I_2$, one has for any $j\in I_1\cup I_2$
\begin{equation}\label{aale13}
|j|\leq y+2sq_n\leq 18sq_n.
\end{equation}
Let $k_0$ satisfy $||2\theta+k_0\alpha||_{\mathbb{R}/\mathbb{Z}}=\min\limits_{|k|\leq |i+j|}||2\theta+k\alpha||_{\mathbb{R}/\mathbb{Z}}$. Then we have the following cases.

{\bf Case} 1. $k_0\neq i+j$. In this case, we may assume $||2\theta+k_0\alpha||_{\mathbb{R}/\mathbb{Z}}<e^{-100\beta(\alpha) sq_n}$. Then for $n>n(\alpha)$, we have
\begin{eqnarray*}
||2\theta+(i+j)\alpha||_{\mathbb{R}/\mathbb{Z}}&\geq&||(i+j-k_0)\alpha||_{\mathbb{R}/\mathbb{Z}}-||2\theta+k_0\alpha||_{\mathbb{R}/\mathbb{Z}}\\
&\geq& e^{-\frac{10\beta(\alpha)}{9}|i+j-k_0|}-e^{-100\beta(\alpha) sq_n}\ \ \ (\mbox{by (\ref{aale2})})\\
&\geq& e^{-80\beta(\alpha) sq_n}-e^{-100\beta(\alpha) sq_n}\geq e^{-100\beta(\alpha) sq_n}\ \ \ (\mbox{by (\ref{aale13})}).\\
\end{eqnarray*}

{\bf Case} 2. $k_0=i+j$. If $-k_0$ is not an $\epsilon_0$-resonance of $\theta$,
then $$||2\theta+(i+j)\alpha||_{\mathbb{R}/\mathbb{Z}}\geq e^{-\epsilon_0|k_0|}\geq e^{-36\epsilon_0sq_n}.$$
If $-k_0$ is an $\epsilon_0$-resonance of $\theta$, then $|n_j|\geq |k_0|$ (otherwise we must have $-k_0=n_{j+1}$ which is impossible by the assumptions). Thus we can assume $$||2\theta-n_j\alpha||_{\mathbb{R}/\mathbb{Z}}<e^{-36\epsilon_0sq_n}.$$
Then for $n>n(\alpha)$
\begin{eqnarray*}
||2\theta+(i+j)\alpha||_{\mathbb{R}/\mathbb{Z}}&\geq&||(n_j+k_0)\alpha||_{\mathbb{R}/\mathbb{Z}}-||2\theta-n_j\alpha||_{\mathbb{R}/\mathbb{Z}}\\
&\geq& e^{-\frac{10\beta(\alpha)}{9}|n_j+k_0|}-e^{-36\epsilon_0sq_n}\ \ \ (\mbox{for $k_0+n_j\neq 0$ and (\ref{aale2})})\\
&\geq&e^{-36\epsilon_0sq_n}.
\end{eqnarray*}

By putting the two cases together, we prove (\ref{aale11}).
\end{proof}

\begin{lemma}\label{aall5}
Let the conditions of Theorem \ref{aalt} be satisfied. Then the sequence $\theta+j\alpha$ with ${j\in I_1\cup I_2}$ is $100\epsilon_0$-uniform if $y>y(\alpha)$ (or equivalently $n>n(\alpha)$).
\end{lemma}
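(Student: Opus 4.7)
The plan is to reduce the uniformity inequality to blockwise sine-product estimates. Writing $x=\cos 2\pi\eta$ with $\eta\in\mathbb{R}/\mathbb{Z}$ and using the factorisations
\begin{align*}
|x-\cos 2\pi\theta_j| &= 2|\sin\pi(\eta-\theta_j)|\,|\sin\pi(\eta+\theta_j)|,\\
|\cos 2\pi\theta_i-\cos 2\pi\theta_j| &= 2|\sin\pi(\theta_i-\theta_j)|\,|\sin\pi(\theta_i+\theta_j)|,
\end{align*}
the $2^k$ prefactors cancel in the ratio, and the uniformity estimate reduces to the simultaneous upper-bounding of the two numerator products $\prod|\sin\pi(\eta\pm\theta_j)|$ and lower-bounding of the diagonal $\prod|\sin\pi(\theta_{i_0}-\theta_j)|=\prod|\sin\pi(i_0-j)\alpha|$ and the anti-diagonal $\prod|\sin\pi(\theta_{i_0}+\theta_j)|=\prod|\sin\pi(2\theta+(i_0+j)\alpha)|$, all for $j\in(I_1\cup I_2)\setminus\{i_0\}$, where $\theta_j:=\theta+j\alpha$.

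For each of these four products I would partition $I_1\cup I_2$ (whose cardinality is $\approx 6sq_n$) into roughly $6s$ consecutive arithmetic blocks of length $q_n$ and apply Lemma \ref{aall3} on each block. The two numerator products then satisfy the upper bound $\leq 2^{-6s(q_n-1)}q_n^{6s\widetilde C}$ immediately. For the diagonal denominator one needs additionally a lower bound on the blockwise minimum $\min|\sin\pi k\alpha|$ as $k$ ranges over $q_n$ consecutive integers: by Lemma \ref{aall1}(i) each such minimum is $\geq e^{-O(\beta(\alpha))q_n}$, yielding an accumulated loss at worst $e^{-O(\beta(\alpha))sq_n}$, which is absorbed by the hypothesis $\epsilon_0\geq 100X\beta(\alpha)$.

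The crux is the anti-diagonal denominator. A blockwise application of Lemma \ref{aall3} reduces the lower bound to controlling the blockwise minima $\min_{j\in\mathrm{block}}|\sin\pi(2\theta+(i_0+j)\alpha)|\geq 2\|2\theta+(i_0+j_{\min})\alpha\|_{\mathbb{R}/\mathbb{Z}}$. For every block in which the unique solution $j$ of $i_0+j=n_\ell$ does not appear, Lemma \ref{aall4}(ii) supplies the lower bound $e^{-36\epsilon_0 sq_n}$ (the hypothesis $|i+j|<|n_{\ell+1}|$ there is automatic since $I_1\cup I_2\subset[-O(sq_n),y+O(sq_n)]$ and $2y<|n_{\ell+1}|$). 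For the single exceptional block where the resonance $n_\ell$ occurs as a sum $i_0+j$, I would invoke (\ref{nN}) of Lemma \ref{aall1} to obtain $\|2\theta-n_\ell\alpha\|\geq e^{-2.5|n_{\ell+1}|\beta(\alpha)}$; uniqueness of the exceptional block follows from $j$ being determined by $i_0$ and $n_\ell$.

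Combining all four estimates, the $2^{-6s(q_n-1)}$ factors cancel between numerator and diagonal denominator, the polynomial overhead $q_n^{O(s)}$ is absorbed into $e^{\epsilon_0 sq_n}$ once $n>n(\alpha)$, and since $k+1=|I_1\cup I_2|\approx 6sq_n$ the total ratio is bounded by $e^{100\epsilon_0 k}$, which is the desired $100\epsilon_0$-uniformity. The main obstacle is the bookkeeping of the anti-diagonal losses: one has to verify that the per-block bound $e^{-36\epsilon_0 sq_n}$ from Lemma \ref{aall4}(ii), once raised over $\sim 6s$ blocks, and the resonant-block contribution $e^{-2.5|n_{\ell+1}|\beta(\alpha)}$ both sit comfortably inside the budget $e^{100\epsilon_0\cdot 6sq_n}$. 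This balancing depends crucially on the spacing estimate (\ref{aale3}) between consecutive $\epsilon_0$-resonances and on the slack in the hypothesis $X\geq 100$ which guarantees $\beta(\alpha)$ is much smaller than $\epsilon_0$.
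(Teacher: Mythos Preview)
Your setup—the sine factorisation, the decomposition of $I_1\cup I_2$ into $\sim 6s$ blocks of length $q_n$, and the appeal to Lemma~\ref{aall3} on each block—is exactly the paper's. The gap is the very bookkeeping you flag at the end: it does not balance.

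For the anti-diagonal $\prod|\sin\pi(2\theta+(i_0+j)\alpha)|$ you bound the block minimum in \emph{every} block by $e^{-36\epsilon_0 sq_n}$ via Lemma~\ref{aall4}(ii). Summed over $\sim 6s$ blocks this gives a logarithmic loss of order $216\,\epsilon_0 s^2q_n$, whereas the budget is $100\epsilon_0 k\approx 600\,\epsilon_0 sq_n$. Since $s$ is the largest integer with $sq_n\le y/8<q_{n+1}$, it can be as large as $q_{n+1}/q_n$, which is unbounded in the Liouvillean regime; the inequality therefore fails as soon as $s\ge 3$. The same quadratic-in-$s$ blowup hits your diagonal estimate: the per-block minimum of $|\sin\pi(i_0-j)\alpha|$ is only $\gtrsim e^{-O(\beta)sq_n}$ (since $|i_0-j|$ can reach $\sim 36sq_n$, and Lemma~\ref{aall1}(i) gives $e^{-O(\beta)|i_0-j|}$), not $e^{-O(\beta)q_n}$ as you assert.

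The missing ingredient is Lemma~\ref{aall4}(i), which you never invoke. It says that among any two points $x$ and $x+\pi j\alpha$ with $0<|j|<q_{n+1}$, at most one can have $|\sin|<e^{-2\beta(\alpha)q_n}$. Applied to the $6s$ block minima (whose arguments differ by multiples of $\alpha$ lying in windows of length $<q_{n+1}$), this forces all but a \emph{bounded} number—the paper gets $6$—to be $\ge e^{-2\beta(\alpha)q_n}$. Only those $O(1)$ exceptional minima incur the heavy $e^{-36\epsilon_0 sq_n}$ (anti-diagonal) or $e^{-O(\beta)sq_n}$ (diagonal) loss; the remaining $6s-O(1)$ contribute only $-2\beta(\alpha)q_n$ each, so the total stays linear in $sq_n$ and fits inside $100\epsilon_0\cdot 6sq_n$.

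As a side remark, your separate treatment of a ``resonant block'' via (\ref{nN}) is unnecessary: the intervals $I_1,I_2$ are chosen (depending on the sign of $n_j$) precisely so that $i_0+j\ne n_j$ for all $i_0,j\in I_1\cup I_2$, hence Lemma~\ref{aall4}(ii) applies uniformly and the case $i_0+j=n_j$ never arises.
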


\begin{proof}
We note that for any $x\in[-1,1]$ and $i\in I_1\cup I_2$
\begin{eqnarray*}
\prod_{j\in I_1\cup I_2,j\neq i}\left|\frac{x-\cos2\pi\theta_j}{\cos2\pi\theta_i-\cos2\pi\theta_j}\right|=
e^{{\sum\limits_{j\in I_1\cup I_2,j\neq i}{\ln|x-\cos2\pi\theta_j|}}-{\sum\limits_{j\in I_1\cup I_2,j\neq i}\ln|\cos2\pi\theta_i-\cos2\pi\theta_j|}}.
\end{eqnarray*}

For $x\in[-1,1]$, we can find $a$ such that $x=\cos2\pi a$. Firstly, we  give the upper bound of the sum $\sum\limits_{j\in I_1\cup I_2,j\neq i}\ln|\cos2\pi a-\cos2\pi\theta_j|$. By the straightforward computations, one has
\begin{equation}\label{aale14}
\sum_{j\in I_1\cup I_2,j\neq i}\ln|\cos2\pi a-\cos2\pi\theta_j|=\Sigma_++\Sigma_-+(6sq_n-1)\ln2,
\end{equation}
where
\begin{eqnarray*}
&&\Sigma_+=\sum_{j\in I_1\cup I_2,j\neq i}\ln|\sin\pi( a+\theta_j)|,\\
&&\Sigma_-=\sum_{j\in I_1\cup I_2,j\neq i}\ln|\sin\pi( a-\theta_j)|.
\end{eqnarray*}
 We observe that the sum $\Sigma_+$ consists of $6s$  terms of the form $$\sum_{j=0,j\neq j_0(x)}^{q_n-1}\ln|\sin\pi(x+j\alpha)|,$$  plus $6s$ terms of the form $$\ln\min\limits_{j=0,\cdots,q_n-1}|\sin\pi(x+j\alpha)|\leq 0,$$
minus $\ln|\sin\pi( a+\theta_i)|$.
Thus from Lemma \ref{aall3}, one has
\begin{equation*}
\Sigma_+\leq 6\widetilde{C}s\ln q_n.
\end{equation*}
Similarly,
\begin{equation*}
\Sigma_-\leq 6\widetilde{C}s\ln q_n.
\end{equation*}
Thus
\begin{equation*}
(\ref{aale14})\leq 12\widetilde{C} s\ln q_n+6sq_n\ln2.
\end{equation*}

We then give the lower bound of the sum $\sum\limits_{j\in I_1\cup I_2,j\neq i}\ln|\cos2\pi\theta_i-\cos2\pi\theta_j|$. Similarly, we have
\begin{equation*}
\sum_{j\in I_1\cup I_2,j\neq i}\ln|\cos2\pi \theta_i-\cos2\pi\theta_j|=\Sigma_+^1+\Sigma_-^1+(6sq_n-1)\ln2,
\end{equation*}
where
\begin{eqnarray*}
&&\Sigma_+^1=\sum_{j\in I_1\cup I_2,j\neq i}\ln|\sin\pi( 2\theta+(i+j)\alpha)|,\\
&&\Sigma_-^1=\sum_{j\in I_1\cup I_2,j\neq i}\ln|\sin\pi( i-j)\alpha|.
\end{eqnarray*}
We note that the sum $\Sigma_+^1$ consists of $6s$ terms of the form $\sum\limits_{j=0,j\neq j_0(x)}^{q_n-1}\ln|\sin\pi(x+j\alpha)|$
 plus $6s$ terms of the form $\ln\min\limits_{j=0,\cdots,q_n-1}|\sin\pi(x+j\alpha)|$. From (i) of Lemma \ref{aall4} and $sq_n<q_{n+1}$, among the $6s$ minimal terms there are at most $6$ terms can be smaller than $-2\beta(\alpha) q_n$. Moreover, these 6  minimal terms have the lower bound $-36\epsilon_0 sq_n$ because of (ii) of Lemma \ref{aall4} (the conditions in (ii) of Lemma \ref{aall4} are satisfied by the definitions of $I_1,I_2$).   Hence applying Lemma \ref{aall3}, one has
\begin{equation*}
\Sigma_+^1\geq -6s(\widetilde{C}\ln q_n+(q_n-1)\ln2)-(6s-6)2\beta(\alpha) q_n-216\epsilon_0sq_n.
\end{equation*}
Similarly, the sum $\Sigma_-^1$ consists of $6s$ terms of the form $\sum\limits_{j=0,j\neq j_0(x)}^{q_n-1}\ln|\sin\pi(x+j\alpha)|$ plus $6s$ terms of the form $\ln\min\limits_{j=0,\cdots,q_n-1}|\sin\pi(x+j\alpha)|$. Among these $6s$ minimal terms there are at most $6$ many of them can be  smaller than $-2\beta(\alpha) q_n$. In addition, these 6 minimal terms have the lower bound
$-72\beta(\alpha) sq_n$ for
$$\min_{j\in I_1\cup I_2, j\neq i}\ln|\sin\pi(j-i)\alpha|\geq \ln||(j-i)\alpha||_{\mathbb{R}/\mathbb{Z}}\geq -72\beta(\alpha) sq_n.$$
Then
\begin{equation*}
\Sigma_-^1\geq -6s(\widetilde{C}\ln q_n+(q_n-1)\ln2)-(6s-6)2\beta(\alpha) q_n-432\beta(\alpha) sq_n.
\end{equation*}

By putting all previous estimates  together, we have for $n>n(\alpha)$
\begin{equation*}
\max_{x\in[-1,1]}\max_{i\in I_1\cup I_2}\prod_{j\in I_1\cup I_2,j\neq i}\left|\frac{x-\cos2\pi\theta_j}{\cos2\pi\theta_i-\cos2\pi\theta_j}\right|\leq e^{(6sq_n-1)100\epsilon_0}.
\end{equation*}
\end{proof}
\begin{lemma}[Lemma 4.2 of \cite{Han}]\label{aall6}
Let $\gamma_1<\gamma$. If $\theta_1,\cdots,\theta_{k+1}\in \mathcal{A}_{k,\widetilde{\mathcal{L}}-\gamma}$, then the sequence $\theta_1,\cdots,\theta_{k+1}$ is not $\gamma_1$-uniform for $k>k(\gamma,\gamma_1,\lambda)$.
\end{lemma}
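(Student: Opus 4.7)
The plan is to argue by contradiction via Lagrange interpolation, exploiting the fact that $Q_k$ is a polynomial of degree exactly $k$ and is therefore reconstructible from its values at any $k+1$ distinct nodes. Suppose for some large $k$ the sequence $\theta_1,\ldots,\theta_{k+1}$ lies in $\mathcal{A}_{k,\widetilde{\mathcal{L}}-\gamma}$ and is $\gamma_1$-uniform; $\gamma_1$-uniformity already forces the nodes $\cos 2\pi\theta_i$ to be pairwise distinct (otherwise the product in the definition would be infinite), so the Lagrange formula applies:
\[
Q_k(x) \;=\; \sum_{i=1}^{k+1} Q_k(\cos 2\pi\theta_i) \prod_{j\neq i}\frac{x-\cos 2\pi\theta_j}{\cos 2\pi\theta_i-\cos 2\pi\theta_j}, \qquad x\in\mathbb{R}.
\]

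The assumption $\theta_i\in\mathcal{A}_{k,\widetilde{\mathcal{L}}-\gamma}$ gives $|Q_k(\cos 2\pi\theta_i)|\leq e^{(k+1)(\widetilde{\mathcal{L}}-\gamma)}$ for every $i$, and $\gamma_1$-uniformity bounds each Lagrange basis polynomial by $e^{\gamma_1 k}$ on $[-1,1]$. Summing the $k+1$ terms,
\[
\sup_{x\in[-1,1]}|Q_k(x)| \;\leq\; (k+1)\,e^{(k+1)(\widetilde{\mathcal{L}}-\gamma)+\gamma_1 k}.
\]
Because $P_k(\theta)=Q_k(\cos 2\pi(\theta+(k-1)\alpha/2))$ and $\theta\mapsto\cos 2\pi(\theta+(k-1)\alpha/2)$ sweeps $[-1,1]$ as $\theta$ varies over $\mathbb{R}$, the left-hand side equals $\sup_{\theta\in\mathbb{R}}|P_k(\theta)|$.

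To close the argument I would match this against a lower bound $\sup_\theta|P_k(\theta)|\geq e^{k(\widetilde{\mathcal{L}}-\eta)}$, valid for every $\eta>0$ and every $k\geq k(\eta,\lambda,\alpha)$. The elementary half is $\sup_\theta\ln|P_k(\theta)|\geq\int_{\mathbb{R}/\mathbb{Z}}\ln|P_k(\theta)|\,\mathrm{d}\theta$, and the substantive half is the Thouless-type identity
\[
\lim_{k\to\infty}\tfrac{1}{k}\int_{\mathbb{R}/\mathbb{Z}}\ln|P_k(\theta)|\,\mathrm{d}\theta \;=\; \widetilde{\mathcal{L}},
\]
which follows from subharmonicity of $\tau\mapsto\int\ln|P_k(\theta+i\tau)|\,\mathrm{d}\theta$ together with its known linear asymptotics as $\tau\to\pm\infty$ (slope $\pm 2\pi k$ coming from the leading coefficient of $Q_k$) and the relation between $\|M_{\overline{\lambda},k}\|$ and the Lyapunov exponent $\widetilde{\mathcal{L}}=\lim_k\tfrac{1}{k}\int\ln\|M_{\overline{\lambda},k}\|\,\mathrm{d}\theta$. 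Choosing $\eta=(\gamma-\gamma_1)/2>0$ and comparing the two bounds yields
\[
k(\widetilde{\mathcal{L}}-\eta) \;\leq\; (k+1)(\widetilde{\mathcal{L}}-\gamma)+\gamma_1 k+\ln(k+1),
\]
which is absurd for all $k$ larger than some threshold $k(\gamma,\gamma_1,\lambda)$, producing the desired contradiction.

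The main obstacle is establishing the Thouless-type identity. Its upper half $\int\ln|P_k|\,\mathrm{d}\theta\leq k\widetilde{\mathcal{L}}+o(k)$ is essentially the polynomial estimate $|P_k|\leq C_\star(\epsilon)e^{(\widetilde{\mathcal{L}}+\epsilon)k}$ already cited in (\ref{aale7}) from \cite{JiH}; the lower half is a standard Herman/Jensen-type consequence of the subharmonicity of $\ln|P_k|$ across a complex strip, of the formula for $M_{\overline{\lambda},k}$ in terms of $P_{k-2},P_{k-1},P_k$, and of Lemma~\ref{aall2} (which controls the off-diagonal $c$-factors whose contribution must be subtracted). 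Once this identity is in hand, the remainder of the argument is the routine interpolation estimate carried out above.
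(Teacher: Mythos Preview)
The paper does not prove this lemma; it is quoted verbatim from \cite{Han} (Lemma~4.2 there), which in turn adapts Jitomirskaya's original argument for the almost Mathieu operator. Your approach---Lagrange interpolation to bound $\sup_{[-1,1]}|Q_k|$ from above, then a Lyapunov-exponent lower bound on the same quantity---is exactly that standard argument, and the interpolation half is carried out correctly.

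For the lower bound $\sup_\theta|P_k(\theta)|\geq e^{k(\widetilde{\mathcal{L}}-\eta)}$, your Herman/Jensen sketch is workable but slightly heavier than necessary. In \cite{JiH,Han} one uses the more elementary route: for a.e.\ $\theta$ Kingman gives $\tfrac{1}{k}\ln\|M_{\overline{\lambda},k}(\theta)\|\to\widetilde{\mathcal{L}}$, the four entries of $M_{\overline{\lambda},k}$ are $P_k,P_{k-1},P_{k-2}$ times bounded $c$-factors (Lemma~\ref{aall2}), and the three-term recurrence $P_{j+1}=(2\cos2\pi(\theta+j\alpha)-E)P_j-|c|^2 P_{j-1}$ lets one pass a bound on $P_{k-1}$ or $P_{k-2}$ up to $P_k$ at the cost of a constant. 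Either way the conclusion is the same, and once you have it your comparison $k(\widetilde{\mathcal{L}}-\eta)\leq(k+1)(\widetilde{\mathcal{L}}-\gamma)+\gamma_1 k+\ln(k+1)$ with $\eta=(\gamma-\gamma_1)/2$ gives the contradiction for large $k$. One small point: the lemma's threshold is written $k(\gamma,\gamma_1,\lambda)$ with no $\alpha$; the Herman leading-coefficient route (computing the top Laurent coefficient of $P_k$ explicitly) delivers this $\alpha$-uniformity directly, whereas the Kingman route a priori gives $k(\gamma,\gamma_1,\lambda,\alpha)$, which is still sufficient for how the lemma is used downstream.
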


\begin{lemma}\label{aall7}
Suppose $\mathcal{L}>10^4\epsilon_0$ and $y>y(\lambda,\alpha)$ (or equivalently $n>n(\lambda,\alpha)$). Then we have $\theta_j=\theta+j\alpha\in \mathcal{A}_{6sq_n-1,\widetilde{\mathcal{L}}-101\epsilon_0}$ for all $j\in I_1$.
\end{lemma}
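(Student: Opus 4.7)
The plan is to argue by contradiction via the standard Poisson-identity / block-resolvent argument, choosing the block around $0$ so that the hypothetical ``bad'' index $j_0\in I_1$ sits at the center of the denominator. Suppose that some $\theta_{j_0}=\theta+j_0\alpha$ with $j_0\in I_1$ fails to lie in $\mathcal{A}_{6sq_n-1,\widetilde{\mathcal{L}}-101\epsilon_0}$. Using the relation $P_k(\theta)=Q_k(\cos2\pi(\theta+\tfrac{k-1}{2}\alpha))$ with $k=6sq_n-1$, this is equivalent to
$$|P_{6sq_n-1}(\theta+x_1\alpha)| > e^{6sq_n(\widetilde{\mathcal{L}}-101\epsilon_0)},\qquad x_1:=j_0-3sq_n+1.$$
I then set $x_2:=j_0+3sq_n-1$, so $[x_1,x_2]$ has length $6sq_n-1$. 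Since $j_0\in I_1$ and $|j_0|\le 2sq_n$, one checks that $x_1<0<x_2$, and moreover the asymmetry of $I_1$ forces $\max(x_2,|x_1|)\le 5sq_n$ (with the side closer to $0$ bounded by roughly $3sq_n$).

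Next I apply the Poisson identity (\ref{aale6}) at $x=0$:
$$1=|u_0|\le |\overline{c}(\theta+(x_1-1)\alpha)|\,|G^E_{[x_1,x_2]}(x_1,0)|\,|u_{x_1-1}|+|c(\theta+x_2\alpha)|\,|G^E_{[x_1,x_2]}(0,x_2)|\,|u_{x_2+1}|,$$
and estimate each Green's function entry via the Cramer-rule formulas (\ref{aale4})--(\ref{aale5}): the $P_j$ numerators by (\ref{aale7}) and the $|c|$-products by Lemma \ref{aall2}, together with the assumed lower bound on the $P_{6sq_n-1}$ denominator. Regrouping the exponents using $\widetilde{\mathcal{L}}=\mathcal{L}+\mathcal{C}(\overline{\lambda})$ and the identity $x_2+|x_1|=6sq_n-2$, the $\mathcal{C}(\overline{\lambda})$ contributions telescope between numerator and denominator, and I am left with
$$|G^E_{[x_1,x_2]}(x_1,0)|\le C_\star(\epsilon)\,e^{\mathcal{L}(x_2-6sq_n)+606sq_n\epsilon_0+O(sq_n\epsilon)},$$
with the symmetric bound for $|G^E_{[x_1,x_2]}(0,x_2)|$ obtained by replacing $x_2$ with $|x_1|$. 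Because $\max(x_2,|x_1|)\le 5sq_n$, each exponent is bounded by $-sq_n\bigl(\mathcal{L}-606\epsilon_0-O(\epsilon)\bigr)$, which by the hypothesis $\mathcal{L}>10^4\epsilon_0$ is smaller than $-9000\,sq_n\,\epsilon_0$ once $\epsilon$ is taken, say, equal to $\epsilon_0$.

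Combining the two bounds with the polynomial growth $|u_{x_1-1}|,|u_{x_2+1}|\le 6sq_n$ coming from the solution hypothesis $|u_k|\le 1+|k|$, the Poisson identity yields
$$|u_0|\le C\, sq_n\, e^{-9000\,sq_n\,\epsilon_0},$$
which is strictly less than $1$ once $y>y(\lambda,\alpha)$, contradicting $u_0=1$. The only step I expect to require genuine care is the \emph{exponent bookkeeping}: in the Jacobi (as opposed to Schr\"odinger) setting two exponential scales $\widetilde{\mathcal{L}}$ and $\mathcal{C}(\overline{\lambda})$ coexist, and it is essential to verify that they cancel precisely using $\widetilde{\mathcal{L}}-\mathcal{C}(\overline{\lambda})=\mathcal{L}$, so that each of the two Green's function entries---individually, not merely their product---is driven to be exponentially small by the gap between $\mathcal{L}$ and $101\epsilon_0$. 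Once this cancellation is in place, the argument is standard and the asymmetric location of $j_0$ inside $I_1$ (which only strengthens one of the two decay estimates) causes no difficulty.
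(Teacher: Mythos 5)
Your argument is correct and follows the same approach as the paper's: argue by contradiction, center the block $[x_1,x_2]=[j_0-3sq_n+1,j_0+3sq_n-1]$ at the bad index $j_0$, apply the Poisson identity at $x=0$, and estimate the two Green's function entries via Cramer's rule, (\ref{aale7}), and Lemma \ref{aall2}, with the $\mathcal{C}(\overline{\lambda})$ contributions from the $|c|$-products cancelling against part of $\widetilde{\mathcal{L}}$ in the denominator so that each entry individually decays like $e^{-(\mathcal{L}-O(\epsilon_0))\cdot\min(|x_1|,x_2)}$. The bookkeeping and conclusion match the paper (the constant $606$ vs.\ the paper's slack of $1000$ is an inessential difference in rounding).
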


\begin{proof}
Let $k=6sq_n-1$ and assume there is some $j_0\in I_1$ such that $\theta_{j_0}\notin \mathcal{A}_{k,\widetilde{\mathcal{L}}-101\epsilon_0}$. Then we have
\begin{equation}\label{aale15}
|P_{k}(\theta+(j_0-3sq_n+1)\alpha)|\geq e^{(k+1)(\widetilde{\mathcal{L}}-101\epsilon_0)}.
\end{equation}
We define $[x_1,x_2]=[j_0-3sq_n+1,j_0+3sq_n-1]$. It follows from the definition of $I_1$ that $0\in[x_1,x_2]$ and $|x_i|\geq \frac{k}{6},i=1,2$. Thus from (\ref{aale4}), (\ref{aale7}), (\ref{aale8}) and (\ref{aale15}), one has for $n>n(\alpha)$
\begin{eqnarray*}
|G_{[x_1,x_2]}^E(x_1,0)|&\leq& \prod_{j=x_1}^{-1}|c(\theta+j\alpha)|e^{(k+x_1-1)(\widetilde{\mathcal{L}}+\beta(\alpha))-(k+1)(\widetilde{\mathcal{L}}-101\epsilon_0)}\\
&\leq&C_{\star}e^{(\mathcal{C}(\overline{\lambda})+\beta(\alpha))|x_1|+(k+x_1-1)(\widetilde{\mathcal{L}}+\beta(\alpha))-(k+1)(\widetilde{\mathcal{L}}-101\epsilon_0)}\\
&\leq& C_{\star}e^{-(\mathcal{L}-1000\epsilon_0)|x_1|}.
\end{eqnarray*}
Similarly,
\begin{equation*}
|G_{[x_1,x_2]}^E(0,x_2)|\leq C_{\star}e^{-(\mathcal{L}-1000\epsilon_0)|x_2|}.
\end{equation*}
Together with the Poisson's identity (\ref{aale6}), we have for $n>n(\lambda,\alpha)$
\begin{eqnarray*}
|u_0|&\leq& C_{\star}k e^{-\frac{1}{6}(\mathcal{L}-1000\epsilon_0)k}\\
&<& 1\ \ \ \ \mbox{(for $\mathcal{L}-1000\epsilon_0>0$)},
\end{eqnarray*}
which is contradicted to $u_0=1$. We prove this Lemma.
\end{proof}

We then give the proof of Theorem \ref{aalt}.

\textbf{Proof of Theorem \ref{aalt}}
\begin{proof}
Let $k=6sq_n-1$. From Lemma \ref{aall5}, Lemma \ref{aall6} and Lemma \ref{aall7}, we obtain that for $n>n(\lambda,\alpha)$ there is some $j_0\in I_2$ such that
$\theta_{j_0}\notin \mathcal{A}_{k,\widetilde{\mathcal{L}}-101\epsilon_0}$. As a result,
\begin{equation}\label{aale16}
|P_{k}(\theta+(j_0-3sq_n+1)\alpha)|\geq e^{(k+1)(\widetilde{\mathcal{L}}-101\epsilon_0)}.
\end{equation}
We define $[x_1,x_2]=[j_0-3sq_n+1,j_0+3sq_n-1]$. It follows from the definition of $I_2$ that
\begin{equation*}
|y-x_i|\geq |j_0-x_i|-|y-j_0|\geq sq_n-1.
\end{equation*}
 It is obvious that $y\in[x_1,x_2]$. Since (\ref{aale4}), (\ref{aale7}), (\ref{aale8}) and (\ref{aale16}), we have
\begin{eqnarray}
\nonumber|G_{[x_1,x_2]}^E(x_1,y)|&\leq& \prod_{j=x_1}^{y-1}|c(\theta+j\alpha)|e^{(k-|x_1-y|-1)(\widetilde{\mathcal{L}}+\beta(\alpha))-(k+1)(\widetilde{\mathcal{L}}-101\epsilon_0)}\\
\nonumber&\leq&C_\star e^{(\mathcal{C}(\overline{\lambda})+\beta(\alpha))|x_1-y|+(k-|x_1-y|-1)(\widetilde{\mathcal{L}}+\beta(\alpha))-(k+1)(\widetilde{\mathcal{L}}-101\epsilon_0)}\\
\label{aale17}&\leq& C_{\star}e^{-(\mathcal{L}-1000\epsilon_0)|x_1-y|}.
\end{eqnarray}
 Similarly,
 \begin{equation}\label{aale18}
 |G_{[x_1,x_2]}^E(y,x_2)|\leq C_{\star}e^{-(\mathcal{L}-1000\epsilon_0)|x_2-y|}.
 \end{equation}
Combining (\ref{aale17}) with (\ref{aale18}) and using the Poisson's identity (\ref{aale6}), we obtain for $n>n(\lambda,\alpha)$
\begin{eqnarray*}
|u_y|&\leq& C_{\star}sq_n e^{-\frac{1}{2}(\mathcal{L}-1000\epsilon_0)sq_n}\\
&\leq& e^{-\frac{1}{33}(\mathcal{L}-1000\epsilon_0)y} \ \ \mbox{(for $sq_n\geq \frac{y}{16}$)}\\
&\leq& e^{-\frac{\mathcal{L}}{100}y} \ \ \mbox{(for $\mathcal{L}\geq 10^4\epsilon_0$}).
\end{eqnarray*}
\end{proof}

\section{Almost reducibility for resonant phases}
In this section, we will prove the almost reducibility of the cocycle $(\alpha,\overline{A}_{\lambda,E})$ for the resonant phases, where $\lambda\in\mathrm{II}$ and $E\in\Sigma_{\lambda,\alpha}$.
\begin{lemma}[Theorem 3.3 of \cite{AJ2010}]\label{arl1}
Let $E\in\Sigma_{\lambda,\alpha}$. Then there exist some $\theta=\theta(E)\in\mathbb{R}$ and some solution $u$ of $H_{\overline{\lambda},\alpha,\theta}u=\frac{E}{\lambda_2}u$ with $u_0=1,|u_k|\leq1$.
\end{lemma}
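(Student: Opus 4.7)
The plan is to combine Aubry duality with a finite-volume Weyl-sequence argument followed by a re-centering step. First, the Aubry duality relation $\Sigma_{\lambda,\alpha}=\lambda_2\Sigma_{\overline{\lambda},\alpha}$ from Section 2.5 gives $E':=E/\lambda_2\in\Sigma_{\overline{\lambda},\alpha}$, and since the spectrum is $\theta$-independent, $E'$ lies in the spectrum of $H_{\overline{\lambda},\alpha,\theta}$ for every $\theta$. It therefore suffices to produce some $\theta\in\mathbb{R}$ together with a solution $u$ of $H_{\overline{\lambda},\alpha,\theta}u=E'u$ satisfying $u_0=1$ and $|u_k|\le 1$.

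Fix any $\theta_0\in\mathbb{R}$ and approximate $E'$ by finite-volume eigenvalues. For each $N\in\mathbb{N}$ choose an $\ell^2$-normalized eigenvector $v^N\in\ell^2([-N,N])$ of the truncated operator $H^{[-N,N]}_{\overline{\lambda},\alpha,\theta_0}$ whose eigenvalue $E_N$ is closest to $E'$; since $E'$ is in the spectrum, $E_N\to E'$. The normalization $\|v^N\|_{\ell^2}=1$ immediately yields $|v^N_k|\le 1$ for every $k$. Let $k_N\in[-N,N]$ be a point where $|v^N_k|$ attains its maximum and define the re-centered, rescaled sequence $\tilde v^N_k:=v^N_{k+k_N}/v^N_{k_N}$. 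By construction $\tilde v^N_0=1$, $|\tilde v^N_k|\le 1$ on its support, and $\tilde v^N$ satisfies $H_{\overline{\lambda},\alpha,\theta_0+k_N\alpha}\tilde v^N=E_N\tilde v^N$ at every interior point of the shifted domain $[-N-k_N,N-k_N]$.

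Finally, extract a subsequence along which $\theta_N:=\theta_0+k_N\alpha$ converges modulo $1$ to some $\theta_*\in\mathbb{R}/\mathbb{Z}$, $E_N\to E'$, and a Cantor diagonal argument based on the uniform bound $|\tilde v^N_k|\le 1$ yields pointwise convergence $\tilde v^N_k\to u_k$. The continuity of the coefficients of $H_{\overline{\lambda},\alpha,\theta}$ in $\theta$ forces $H_{\overline{\lambda},\alpha,\theta_*}u=E'u$, and the normalizations pass to the limit as $u_0=1$ and $|u_k|\le 1$. Taking $\theta=\theta_*$ then produces the required solution.

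The main technical obstacle is ensuring that the limit $u$ is defined on all of $\mathbb{Z}$: the re-centered window $[-N-k_N,N-k_N]$ degenerates to a half-line if the maximizer $k_N$ drifts to the boundary of $[-N,N]$. This is handled by replacing $k_N$ with an interior near-maximizer lying in $[-N/2,N/2]$, which is feasible because a unit-norm finite-volume eigenvector cannot concentrate too much $\ell^2$-mass uniformly in an $L$-neighborhood of the boundary. The resulting bounded multiplicative loss in the pointwise bound is reabsorbed by a final renormalization of the limit solution at its supremum. All remaining ingredients---continuity in $\theta$, diagonal extraction, and passage of the eigenequation to the limit---are standard.
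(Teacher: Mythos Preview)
Your overall strategy---produce approximate bounded solutions for nearby data and extract a limit by diagonal compactness---is sound and is essentially the ``periodic approximation'' route the paper's remark points to (the paper gives no proof of its own, citing \cite{AJ2010} and noting that either Berezanski\u{\i}'s theorem or periodic approximation works and extends to the Jacobi setting). The gap is in your resolution of the boundary obstacle in the last paragraph.

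Your fix requires an interior near-maximizer in $[-N/2,N/2]$ whose value is within a \emph{uniformly bounded} factor of the global maximum, so that the rescaled sequences stay uniformly bounded and the diagonal limit exists. But $\overline{\lambda}\in\mathrm{I}$ is the positive Lyapunov exponent regime, and Dirichlet eigenvectors of $H^{[-N,N]}_{\overline{\lambda},\alpha,\theta_0}$ are typically exponentially localized; nothing prevents the particular eigenvector $v^N$ you select (merely the one with eigenvalue nearest $E'$) from being localized within $O(1)$ of an endpoint. In that case essentially all the $\ell^2$-mass lies in a fixed-width boundary strip, so your premise on boundary mass fails outright; and even granting interior $\ell^2$-mass $\geq 1/2$, one only gets an interior value of order $N^{-1/2}$, so the multiplicative loss $\max_k|v^N_k|\big/\max_{|k|\le N/2}|v^N_k|$ is unbounded in $N$ and cannot be ``reabsorbed by a final renormalization,'' since without a uniform bound the diagonal limit need not even exist.

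The clean repair is exactly the periodic approximation the paper alludes to: replace Dirichlet boxes by rational frequencies $\alpha_n=p_n/q_n\to\alpha$. For $E_n\to E'$ with $E_n$ in the spectrum of the periodic operator $H_{\overline{\lambda},\alpha_n,\theta_n}$ (such $E_n,\theta_n$ exist by continuity of the spectrum in the frequency), Floquet theory supplies a Bloch solution defined and bounded on \emph{all} of $\mathbb{Z}$; re-center at a maximizer within a single period and run your compactness/diagonal extraction unchanged, now with $\alpha_n\to\alpha$, $\theta_n\to\theta_*$, $E_n\to E'$. The boundary issue simply never arises.
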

\begin{remark}
In Schr\"odinger operators case, this lemma was proved in \cite{AJ2010} by applying $\mathrm{Berezanski\breve{{\i}}}$'s theorem. An alternative proof is based on the periodic approximations. The argument can be easily extended  to Jacobi operators case.
\end{remark}
Throughout this section we fix $E,\theta=\theta(E)$ and $u$, which are all given by Lemma \ref{arl1}.
\begin{definition}
Suppose $f(x)=\sum\limits_{k\in\mathbb{Z}}f_{k}e^{2\pi ikx}$. We say $f$ has essential degree at most $l$ if $f_k=0$ for $k$ being outside an interval $[a,b]\subset\mathbb{Z}$ of length $l$ (i.e.,  $b-a=l$).
\end{definition}

\begin{lemma}[Theorem 6.1 of \cite{AJ2010} and (4.5) of \cite{LYJFG}]\label{arl2}
Suppose $1\leq r\leq\lfloor\frac{q_{s+1}}{q_s}\rfloor$. If $f$ has essential degree at most $l=rq_s-1$ and $x_0\in\mathbb{R}/\mathbb{Z}$, then
\begin{equation*}
||f||_0\leq C_{1}q_{s+1}^{C_1 r}\sup_{0\leq j\leq l}|f(x_0+j\alpha)|
\end{equation*}
and
\begin{equation}\label{are1}
||f||_0\leq C_{1}e^{C_1\beta(\alpha) l}\sup_{0\leq j\leq l}|f(x_0+j\alpha)|,
\end{equation}
where $C_1>0$ is some absolute constant and $\lfloor x\rfloor$ denotes the integer part of  $x\in\mathbb{R}$.
\end{lemma}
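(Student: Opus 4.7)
The plan is a Lagrange interpolation argument on the unit circle, combined with the quantitative three-distance estimate of Lemma \ref{aall3}. Write $f(x)=\sum_{k=a}^{a+l}\hat f_k e^{2\pi ikx}$ and set $z=e^{2\pi ix}$, $z_j=e^{2\pi i(x_0+j\alpha)}$. Then $p(z):=z^{-a}\sum_{k=a}^{a+l}\hat f_k z^{k}$ is a polynomial of degree at most $l$ with $|p(z)|=|f(x)|$ on $|z|=1$. Since $l+1=rq_s\leq q_{s+1}$, all differences $j-i$ with $0\leq i\neq j\leq l$ satisfy $0<|j-i|<q_{s+1}$, so $\|(j-i)\alpha\|_{\mathbb{R}/\mathbb{Z}}\geq\|q_s\alpha\|_{\mathbb{R}/\mathbb{Z}}>0$; in particular the nodes $z_0,\dots,z_l$ are pairwise distinct. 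Lagrange interpolation then yields
\begin{equation*}
\|f\|_0\leq(l+1)\cdot\sup_{0\leq j\leq l}|f(x_0+j\alpha)|\cdot\max_{0\leq j\leq l}\sup_{|z|=1}\prod_{i\neq j}\frac{|z-z_i|}{|z_j-z_i|},
\end{equation*}
so the task reduces to bounding the last product by $q_{s+1}^{Cr}$.

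Using $|e^{2\pi iu}-e^{2\pi iv}|=2|\sin\pi(u-v)|$, the numerator and denominator factor as $2^l\prod_{i\neq j}|\sin\pi(x-x_0-i\alpha)|$ and $2^l\prod_{i\neq j}|\sin\pi(j-i)\alpha|$ respectively. I would partition the relevant index set into at most $r+2$ blocks of $q_s$ consecutive integers (omitting $i=j$ for the numerator, and the origin $k=0$ for the denominator after the substitution $k=i-j$), and apply Lemma \ref{aall3} after a translation to each block. The upper half of that lemma bounds the numerator uniformly in $x$:
\begin{equation*}
\sum_{i\neq j}\ln|\sin\pi(x-x_0-i\alpha)|\leq -l\ln 2+Cr\ln q_{s+1},
\end{equation*}
since the minimum of each block is non-positive and so only decreases the sum. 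The lower half, combined with the continued-fraction inequality $\|k\alpha\|_{\mathbb{R}/\mathbb{Z}}\geq\|q_s\alpha\|_{\mathbb{R}/\mathbb{Z}}\geq 1/(2q_{s+1})$ valid for every $0<|k|<q_{s+1}$, controls each block minimum by $\ln|\sin\pi k_{\min}\alpha|\geq -\ln q_{s+1}-O(1)$, giving
\begin{equation*}
\sum_{i\neq j}\ln|\sin\pi(j-i)\alpha|\geq -l\ln 2-Cr\ln q_{s+1}.
\end{equation*}
Combining the two estimates produces $\sup_{|z|=1}\prod_{i\neq j}|z-z_i|/|z_j-z_i|\leq q_{s+1}^{2Cr}$, which together with $l+1\leq q_{s+1}$ yields the first stated inequality.

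The second inequality follows from the first via $\beta(\alpha)=\limsup_{s}(\ln q_{s+1})/q_s$: for $s$ larger than some $s_0(\alpha)$ one has $\ln q_{s+1}\leq 2\beta(\alpha)q_s$, hence $q_{s+1}^{C_1 r}\leq e^{2C_1\beta(\alpha)r q_s}=e^{2C_1\beta(\alpha)(l+1)}$; for the remaining finitely many $s$ the bound is trivially absorbed into the constant. The main technical point is the block decomposition above: one must verify that each window of $q_s$ consecutive integers contributes only one resonant factor $|\sin\pi k\alpha|\asymp 1/q_{s+1}$, so that the total cost is $r\ln q_{s+1}$ and not $l\ln q_{s+1}$. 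This is precisely the content of Lemma \ref{aall3}, and it is the hypothesis $r\leq\lfloor q_{s+1}/q_s\rfloor$ (equivalently $l<q_{s+1}$) that makes the inequality $\|k\alpha\|_{\mathbb{R}/\mathbb{Z}}\geq\|q_s\alpha\|_{\mathbb{R}/\mathbb{Z}}$ applicable to every relevant $k$.
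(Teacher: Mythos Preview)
The paper does not prove this lemma at all; it is quoted with citations to Avila--Jitomirskaya and Liu--Yuan. Your Lagrange interpolation argument, partitioning the index set into $r$ windows of length $q_s$ and invoking Lemma~\ref{aall3} on each, is exactly the approach of Theorem~6.1 in \cite{AJ2010}, and it is correct.

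Two small points worth tightening. First, in bounding the numerator $\sum_{i\neq j}\ln|\sin\pi(y'-i\alpha)|$ from above, the block containing $j$ needs a word: if $j$ is not the minimizing index of that block, you are removing a non-minimal term, so you should observe that replacing the excluded index $j$ by the true minimizer $i_{\min}$ only increases the sum, after which Lemma~\ref{aall3} applies verbatim. Second, your derivation of the exponential form \eqref{are1} from the polynomial bound uses $\ln q_{s+1}\leq 2\beta(\alpha)q_s$ for $s\geq s_0(\alpha)$ and absorbs the finitely many small $s$ into the constant; this makes the resulting constant $\alpha$-dependent rather than absolute. In practice the paper only invokes \eqref{are1} for $l$ larger than some $l(\lambda,\alpha)$, so this does no harm, but the claim that $C_1$ is absolute in the second inequality is not quite what your argument delivers.
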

In the following, we let $\lambda\in \mathrm{II}$  and
$$\epsilon_0=\frac{\mathcal{L}_{\overline{\lambda}}}{10^5}\geq100C_1\beta(\alpha),h=\frac{\mathcal{L}_{\overline{\lambda}}}{200\pi}.$$ Moreover, we let $\{n_j\}$ be the set of all $\epsilon_0$-resonances of $\theta$ and assume $\theta$ is $\epsilon_0$-resonant. Recalling Theorem \ref{aalt}, we have for any $k$ satisfying $3|n_j| <|k|<\frac{|n_{j+1}|}{3}$
\begin{equation}\label{are2}
|u_k|\leq C_{\star} e^{-2\pi h|k|}.
\end{equation}

Our main result of this section is:
\begin{theorem}\label{art}
Suppose $0<\beta(\alpha)<\infty,\lambda\in\mathrm{II} \ with\  \mathcal{L}_{\overline{\lambda}}\geq 10^4\epsilon_0$ and $E\in\Sigma_{\lambda,\alpha}$. Let $|n_j|>n(\lambda,\alpha)$. Then there is some $W\in C^{\omega}(\mathbb{R}/\mathbb{Z},\mathrm{PSL}(2,\mathbb{R}))$ having degree $m_j$ with $|m_j|\leq 9|n_j|$ such that
\begin{equation}\label{are24}
\sup_{x\in\mathbb{R}/\mathbb{Z}}||W^{-1}(x+\alpha)\overline{A}_{\lambda,E}(x)W(x)-R_{\pm\widetilde{\theta}}||\leq e^{-\frac{h}{30}|n_{j+1}|},
\end{equation}
where $\widetilde{\theta}=\theta-\frac{n_j}{2}\alpha$. Moreover,
\begin{equation}\label{are25}
||2\rho_{\lambda,\alpha}(E)-m_j\alpha\pm(2\theta-n_j\alpha)||_{\mathbb{R}/\mathbb{Z}}\leq e^{-\frac{h}{30}|n_{j+1}|}.
\end{equation}
\end{theorem}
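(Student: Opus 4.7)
The plan is to follow the scheme of Avila--Jitomirskaya (\cite{AJ2010}, Theorem 7.1) with modifications from \cite{Han, LYJFG} to handle the Jacobi coefficients and the Liouvillean frequency. The strategy is: truncate the Aubry-dual eigenfunction $u$ to a trigonometric polynomial, use (\ref{dualre}) to manufacture an analytic ``almost invariant section'' of the $A_{\lambda,E}$-cocycle, and build $W$ from this section and its complex conjugate.

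Step 1 (truncation). Choose $N$ with $3|n_j| < N < |n_{j+1}|/3$, for concreteness $N = \lfloor |n_{j+1}|/9\rfloor$. Using the almost localization bound (\ref{are2}), the trigonometric polynomial $u^{(1)}(x) = \sum_{|k|\le N} u_k e^{2\pi ikx}$ approximates the formal Fourier series of $u$ in the strip $|\operatorname{Im} x| \le h$ with error $O(e^{-\pi h N})$. Set
\[
U^{(1)}(x) = \begin{pmatrix} e^{2\pi i\theta}\,u^{(1)}(x) \\ u^{(1)}(x-\alpha) \end{pmatrix}.
\]
By (\ref{dualre}) applied to the untruncated series, $U^{(1)}$ is an almost invariant section:
\[
A_{\lambda,E}(x)\,U^{(1)}(x) = e^{2\pi i\theta}\,U^{(1)}(x+\alpha) + \mathcal{E}(x), \qquad \|\mathcal{E}\|_h \le C_\star\,e^{-\pi h N}.
\]

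Step 2 (conjugation). Because $u^{(1)}$ has essential degree $\le 2N$ and, by (\ref{are2}), concentrates near the two frequencies $0$ and $n_j$, we may write $u^{(1)} = u^{\sharp} + e^{2\pi i n_j x} u^{\flat}$ up to an exponentially small tail, with $u^{\sharp}, u^{\flat}$ of essential degree $\lesssim 3|n_j|$. Extracting the oscillatory factor $e^{2\pi i n_j x /2}$ from $U^{(1)}$ produces a vector $V(x)$ whose degree is of order $|n_j|$. Assemble the real matrix $B(x) = (\operatorname{Re} V(x),\, \operatorname{Im} V(x))$ and renormalize $W_0 = B/\sqrt{|\det B|}$, so that $W_0 \in \mathrm{PSL}(2,\mathbb{R})$. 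Then $W = Q_\lambda \cdot W_0$, with $Q_\lambda$ from Section 2.4, will be the conjugation for the renormalized cocycle $\overline{A}_{\lambda,E}$. The rotation that appears after conjugation is $\pm\widetilde{\theta} = \pm(\theta - n_j\alpha/2)$, since the phase $n_j\alpha/2$ is absorbed into the degree of $W$.

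Step 3 (non-degeneracy via Lemma~\ref{arl2}). The main obstacle is to control $\|\det B\|_0$ from below uniformly on $\mathbb{R}/\mathbb{Z}$. A direct computation at $x = 0$ shows that $\det B(0)$ picks up a dominant ``Wronskian'' term $\sim u_0 \cdot \overline{u_{n_j}}$ coming from pairing the peak at $0$ with the peak at $n_j$; by $u_0 = 1$ and the resonance condition $\|2\theta - n_j\alpha\|_{\mathbb{R}/\mathbb{Z}} \le e^{-\epsilon_0|n_j|}$ together with $u_0 = 1$ for the shifted solution, this is bounded below by a constant. Since $\det B$ is a trigonometric polynomial of essential degree $\lesssim 2N$, the Liouvillean sampling-to-sup estimate (\ref{are1}) lifts this point bound to the uniform estimate $\|\det B\|_0 \ge e^{-C_1\beta(\alpha) N}$, which swamps the truncation error $e^{-\pi h N}$ thanks to the standing assumption $\epsilon_0 \ge 100 C_1\beta(\alpha)$. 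This is precisely where the condition $\mathcal{L}_{\overline{\lambda}} \ge 10^4\epsilon_0 \gg \beta(\alpha)$ is consumed.

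Step 4 (degree, rotation number, and error bookkeeping). A direct count of the oscillation of $V$ gives $\deg W = m_j$ with $|m_j| \le 9|n_j|$. Plugging the estimate on $\det B$ back into Step 1 yields
\[
\|W_0^{-1}(x+\alpha) A_{\lambda,E}(x) W_0(x) - R_{\pm\widetilde{\theta}}\|_0 \le e^{-h|n_{j+1}|/30},
\]
and post-composing with the fixed analytic $Q_\lambda$ gives (\ref{are24}). The rotation-number statement (\ref{are25}) then follows from (\ref{pr2}) with $B = W$ and (\ref{rr}) applied to the conjugated cocycle. The delicate point throughout is keeping track of the three competing scales — the decay rate $2\pi h$ coming from almost localization, the truncation length $N \sim |n_{j+1}|$, and the Liouvillean loss $\beta(\alpha) N$ from Lemma~\ref{arl2} — which is why the hypothesis on $\mathcal{L}_{\overline{\lambda}}/\epsilon_0$ is chosen with a large absolute constant.
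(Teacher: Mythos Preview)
Your overall architecture matches the paper's, but Step~3 has a genuine gap. The claim that $\det B(0)$ is dominated by a ``Wronskian'' term $u_0\cdot\overline{u_{n_j}}$ bounded below by a constant is unjustified: almost localization supplies only \emph{upper} bounds on $|u_k|$, so nothing prevents $u_{n_j}$ from being tiny, and in any case $\det B(0)$ involves full sums $\sum u_k$, not a single product. Moreover, Lemma~\ref{arl2} is a sampling-to-sup \emph{upper} bound and cannot propagate a pointwise lower bound to a uniform one. The paper's route to $\inf_x|\det B(x)|\ge Z^{-5110}$ (Lemma~\ref{det}, with $Z^{-1}=\|2\theta-n_j\alpha\|$) is indirect and dynamical: assuming $|\det B(x_0)|$ is small, one iterates the almost-invariant section using the sub-polynomial bound $\|\overline{A}_m\|\le m^{5100}$ of Lemma~\ref{art1} up to time $\sim Z$, and exploits that $e^{4\pi i j\widetilde\theta}$ is close to $1$ for $j\lesssim Z^{1/6}$ but close to $-1$ at $j\sim Z/4$, forcing $U_\dag$ and $\overline{U_\dag}$ to be simultaneously nearly parallel and nearly anti-parallel, hence $\|U_\dag\|$ small somewhere---contradicting the separately established lower bound of Lemma~\ref{ustar}. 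Both Lemma~\ref{art1} and this phase-interference argument are essential and absent from your sketch.

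Two further issues. In Step~2 the order of operations is wrong: $A_{\lambda,E}$ has genuinely complex entries (its off-diagonal is $c_\lambda$, not $|c_\lambda|$), so $\Re V,\Im V$ do not satisfy useful cocycle relations under $A_{\lambda,E}$, and $Q_\lambda\cdot W_0$ is then complex, not in $\mathrm{PSL}(2,\mathbb{R})$. The paper first sets $U_\star=Q_\lambda U^{I_2}$, which is almost-invariant for the \emph{real} cocycle $\overline{A}_{\lambda,E}$, and only then splits into real and imaginary parts. In Step~4, the bound $|m_j|\le 9|n_j|$ is not a ``direct count'': the Fourier support of your $V$ has width $\sim|n_{j+1}|$, not $|n_j|$. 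The paper obtains the degree bound by approximating the column $S$ of $W$ by an explicit trigonometric polynomial of degree $\le 8|n_j|$ (a second truncation $\Theta_n$ at scale $\sim|n_j|$), proving a lower bound $\inf\|S\|\ge e^{-4h|n_j|}$ by an argument parallel to Lemma~\ref{ustar}, and then invoking Rouch\'e's theorem.
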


\begin{lemma}\label{arl3} We have
\begin{enumerate}
\item[$\mathrm{(i)}$] for $|n_j|>n(\alpha)$, there exists $l=rq_s-1<q_{s+1}$ such that $9|n_j| <l<\frac{|n_{j+1}|}{9}$;
\item[$\mathrm{(ii)}$] for any $m\in\mathbb{Z}$ satisfying $|m|>m(\lambda,\alpha)$, there is some $l=rq_s-1<q_{s+1}$ such that $l\in(9|n_j|,\frac{|n_{j+1}|}{9})$ and
    \begin{equation}\label{are3}
    \frac{\ln |m|}{h}\leq l \leq 1700 \frac{\ln |m|}{h}.
    \end{equation}
\end{enumerate}
\end{lemma}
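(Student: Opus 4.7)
Both parts are purely arithmetic: I want to place a term $rq_s-1$ of an arithmetic progression into a prescribed interval, using only the resonance separation (\ref{aale3}) as quantitative input. Since $X\ge 100$, that estimate gives $|n_{j+1}|>40X|n_j|\ge 4000|n_j|$, hence
\[
\frac{|n_{j+1}|}{9}-9|n_j|\ge \Bigl(\frac{40X}{9}-9\Bigr)|n_j|\ge 400|n_j|,
\]
so the target ``good'' interval $(9|n_j|,|n_{j+1}|/9)$ is both absolutely long and multiplicatively wide (ratio $>49$).

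For (i), I take $s$ to be the largest index with $q_s\le 9|n_j|$; by monotonicity of the $q_s$ this exists once $|n_j|>n(\alpha)$, and by maximality $q_{s+1}>9|n_j|$. The arithmetic progression $\{rq_s-1:r\ge 1\}$ has spacing $q_s\le 9|n_j|<400|n_j|$, strictly smaller than the length of $(9|n_j|,|n_{j+1}|/9)$, so some $r\ge 1$ places $l=rq_s-1$ inside that interval. To verify the admissibility condition $rq_s\le q_{s+1}$ (equivalently $l<q_{s+1}$), I split into two cases depending on whether $q_{s+1}\le|n_{j+1}|/9$ or not: in the first case $r=\lfloor q_{s+1}/q_s\rfloor$ works; in the second, any $r$ with $rq_s-1\in(9|n_j|,|n_{j+1}|/9)$ automatically satisfies $rq_s<|n_{j+1}|/9+1\le q_{s+1}$. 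A harmless sub-case uses $s+1$ in place of $s$ when $q_{s+1}$ barely exceeds $9|n_j|$.

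For (ii), set $N:=\ln|m|/h$. My plan is to choose $j=j(m)$ so that the good interval $(9|n_j|,|n_{j+1}|/9)$ has substantial overlap with $[N,1700N]$, and then apply the argument of (i) restricted to that overlap. I locate $N$ in the alternating pattern of ``bad'' intervals $[|n_j|/9,9|n_j|]$ and good intervals $(9|n_j|,|n_{j+1}|/9)$ cut out by the resonance sequence. If $N$ already lies in a good interval, I keep that $j$ and the intersection $[N,\min(1700N,|n_{j+1}|/9))$ is non-empty. If $N$ lies in a bad interval $[|n_j|/9,9|n_j|]$, I switch to the adjacent good interval, whose left endpoint is at most $9|n_j|\le 81N$ and whose right endpoint is at least $(40X/81)N\ge 49N$; the intersection with $[N,1700N]$ then has length at least a fixed multiple of $N$. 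Choosing $s$ maximal with $q_s$ much smaller than this length, the argument of (i) produces $l=rq_s-1<q_{s+1}$ inside $[N,1700N]\cap(9|n_j|,|n_{j+1}|/9)$. The generous factor $1700$ is exactly what is needed to absorb the worst-case multiplicative shift $81$ arising from the width of a bad interval.

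The main technical point is the case in (ii) when $N$ sits deep inside a bad interval, so that the relevant good interval begins as high as $81N$ and the useful window $[81N,1700N]$ is only a factor $\sim 20$ wide. Verifying that the continued-fraction denominator $q_s$ produced by the algorithm is both small enough to have a multiple in this window and at the same time admits such a multiple with $rq_s\le q_{s+1}$ is the only delicate computation; it reduces to the monotonicity of $\{q_s\}$ combined with the gap estimate (\ref{aale3}).
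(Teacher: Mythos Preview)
Your strategy is the same as the paper's---use the resonance gap \eqref{aale3} to make $(9|n_j|,|n_{j+1}|/9)$ wide, then drop a term $rq_s-1$ into it---but the paper anchors the continued-fraction index at the \emph{right} end of the target rather than the left. Concretely, for (i) the paper picks $s$ by $q_s<\tfrac{1}{20}|n_{j+1}|\le q_{s+1}$, so that $\lfloor q_{s+1}/q_s\rfloor q_s-1\ge q_{s+1}/2.5\ge |n_{j+1}|/50>9|n_j|$; then the \emph{minimal} $r$ with $rq_s-1>9|n_j|$ automatically gives $l\le q_s+9|n_j|<\tfrac{1}{9}|n_{j+1}|$, and $l<q_{s+1}$ is built in. For (ii) the paper's two cases are exactly your good/bad dichotomy with breakpoint $|n_j|/50$ rather than $|n_j|/9$; this is what produces the constant $1700=34\cdot 50$ (from $l<q_s+9|n_j|\le 25|n_j|+9|n_j|$ in the bad case).

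There is a genuine (if small) gap in your ``first case'' of (i). With $q_s\le 9|n_j|<q_{s+1}\le |n_{j+1}|/9$ you assert $r=\lfloor q_{s+1}/q_s\rfloor$ works, but $l=\lfloor q_{s+1}/q_s\rfloor q_s-1=q_{s+1}-q_{s-1}-1$ can fail to exceed $9|n_j|$: take $a_{s+1}=1$ with $q_s$ close to $9|n_j|$, so $q_{s+1}=q_s+q_{s-1}$ barely exceeds $9|n_j|$ while $q_{s-1}$ is comparable to $q_s$. Your sub-case (shift to $s+1$) is the right idea but is not quite ``harmless'': if additionally $q_{s+1}=9|n_j|+1$ and $a_{s+2}=1$, then $r'=1$ gives $l=9|n_j|$ (not strict) and $r'=2$ violates $2q_{s+1}\le q_{s+2}$. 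One must then pass to $s+2$, and in principle iterate. This all terminates and can be made to work, but the paper's right-anchored choice of $s$ sidesteps the whole case analysis.
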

\begin{remark}
Recalling (\ref{aale3}), then $9|n_j|<\frac{|n_{j+1}|}{9}$ makes sense.
\end{remark}
{\begin{proof}
See the Appendix A for a detailed proof.
\end{proof}}

In the following, we assume the conditions in Theorem \ref{art} are satisfied.

Due to Lemma \ref{arl3}, we define $I_1=\left[-\lfloor\frac{l}{2}\rfloor, l-\lfloor\frac{l}{2}\rfloor\right]$  with $l=rq_s-1<q_{s+1}$ and $l\in(9|n_j|,\frac{|n_{j+1}|}{9})$. In addition, we let
\begin{equation}\label{are4}
U^{I_1}(x)=\left(\begin{array}{c}e^{2\pi i\theta}\sum\limits_{k\in I_1}u_ke^{2\pi ki x}\\ \sum\limits_{k\in I_1}u_ke^{2\pi ki (x-\alpha)}\end{array}\right)
\end{equation}
and $U^{I_1}_\star(x)=Q_{\lambda}(x)\cdot U^{I_1}(x)$. Then one has for $A(x)=A_{\lambda,E}(x)$
\begin{equation*}
A{(x)}U^{I_1}(x)=e^{2\pi i\theta}U^{I_1}(x+\alpha)+G(x),
\end{equation*}
and  for $\overline{A}{(x)}=Q_\lambda(x+\alpha) A(x)Q_\lambda^{-1}(x)$
\begin{equation}\label{are5}
\overline{A}{(x)}U_\star^{I_1}(x)=e^{2\pi i\theta}U_\star^{I_1}(x+\alpha)+G_{\star}(x).
\end{equation}
Since (\ref{are2}), $||Q_\lambda||_{h}, ||Q_\lambda^{-1}||_{h}\leq C_\star$ and by the direct computations, we have
\begin{equation}\label{are6}
||G_{\star}||_{\frac{h}{3}}\leq C_\star e^{-3hl}.
\end{equation}

\begin{lemma}[Lemma A.3 and Lemma 2.1 of \cite{Han}]
For any $\delta>0$, there is some $C_\star(\delta)>0$ (depending only  on $\lambda,\alpha,\delta$) such that for $k\in\mathbb{Z}$
\begin{equation}\label{are7}
||\overline{A}_{k}||_{\frac{1}{2\pi}\mathcal{L}_{\overline{\lambda}}}\leq C_\star(\delta)e^{\delta|k|}.
\end{equation}
\end{lemma}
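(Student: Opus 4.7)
The plan is a standard large-deviation argument for analytic cocycles with vanishing Lyapunov exponent on the strip. Set $h_0=\mathcal{L}_{\overline{\lambda}}/(2\pi)$ and
\begin{equation*}
L_k(\epsilon):=\frac{1}{k}\int_{\mathbb{R}/\mathbb{Z}}\ln\|\overline{A}_k(x+i\epsilon)\|\,dx.
\end{equation*}
I would rely on three observations: first, $\overline{A}_k(z)\in \mathrm{SL}(2,\mathbb{C})$, so $\ln\|\overline{A}_k(z)\|\geq 0$; second, $\epsilon\mapsto L_k(\epsilon)$ is convex on $[-h_0,h_0]$, being the $x$-average of the subharmonic, periodic-in-$\Re z$ function $\ln\|\overline{A}_k(z)\|$; third, by the standard inf-formula for the Lyapunov exponent together with part (ii) of Theorem 1.1 of \cite{JiM1}, $L_k(\epsilon)\searrow \mathcal{L}(\alpha,\overline{A}^{\epsilon}_{\lambda,E})=0$ pointwise on $[-h_0,h_0]$.

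Next I would upgrade this to uniform smallness. Convex functions converging pointwise to a continuous limit converge uniformly on compact subsets of the interior; combined with pointwise convergence at the two endpoints $\pm h_0$, a convex interpolation argument gives uniform convergence on the whole closed interval: for every $\eta>0$ there is some $k_0=k_0(\eta,\lambda,\alpha)$ such that
\begin{equation*}
\sup_{|\epsilon|\leq h_0}L_k(\epsilon)\leq \eta \text{ for all } k\geq k_0.
\end{equation*}
To convert this $L^1$ estimate into a pointwise estimate, I would invoke the submean value inequality for the subharmonic function $z\mapsto \ln\|\overline{A}_k(z)\|$: for $z_0=x_0+i\epsilon_0$ with $|\epsilon_0|\leq h_0-r$ and a small $r>0$,
\begin{equation*}
\ln\|\overline{A}_k(z_0)\|\leq \frac{1}{\pi r^2}\iint_{|z-z_0|\leq r}\ln\|\overline{A}_k(z)\|\,dA(z)\leq \frac{C}{r}\sup_{|\epsilon-\epsilon_0|\leq r}kL_k(\epsilon),
\end{equation*}
where the final inequality uses $1$-periodicity in $\Re z$ and Fubini to bound each horizontal slice of the disc integral by a fixed multiple of $\int_0^1\ln\|\overline{A}_k(x+i\epsilon)\|\,dx=kL_k(\epsilon)$. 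Tuning $r$ and $\eta$ so that $C\eta/r\leq \delta$ yields $\|\overline{A}_k\|_{h_0-r}\leq e^{\delta k}$ for $k\geq k_0$, and the finitely many small $k$ are absorbed into a multiplicative constant depending on $\lambda,\alpha,\delta$.

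The delicate point is reaching the closed strip $|\Im z|\leq h_0$ demanded by the statement, rather than the slightly smaller $|\Im z|\leq h_0-r$ produced by the disc argument. I would handle the thin boundary layer via a Hadamard three-lines interpolation applied to $\frac{1}{k}\ln\|\overline{A}_k(z)\|$, using the $\delta$-bound on $|\Im z|=h_0-r$ together with an a priori (fixed, but possibly larger) uniform bound on $|\Im z|=h_0$ coming from the analyticity of the cocycle up to that line; the three-lines convexity then absorbs the boundary contribution into the same $\delta |k|$ exponent after taking $r$ small relative to $\delta$. A minor additional subtlety, since the conjugacy $Q_\lambda$ defining $\overline{A}$ is only asserted to be analytic on $|\Im x|\leq h_0/2$, is that the preceding argument should be first carried out for an analytic representative valid on the full closed strip of radius $h_0$ (for instance $A_{\lambda,E}$ itself, for which $\mathcal{L}(\alpha,A^{\epsilon}_{\lambda,E})=0$ on $[-h_0,h_0]$ also by part (ii) of Theorem 1.1 of \cite{JiM1}), and then transferred back to $\overline{A}_k=Q_\lambda(\cdot+k\alpha)A_{\lambda,E,k}Q_\lambda^{-1}(\cdot)$ on the sub-strip $|\Im x|\leq h_0/2$ by the uniform boundedness of $Q_\lambda^{\pm 1}$ there. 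This last rerouting is the main obstacle I expect, but it is a bookkeeping step rather than a conceptual one.
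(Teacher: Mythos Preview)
The paper does not supply a proof of this lemma; it is quoted from \cite{Han} (Lemma~A.3 and Lemma~2.1 there), so there is no in-paper argument to compare against. Your strategy---zero Lyapunov exponent on the strip, convexity of $\epsilon\mapsto L_k(\epsilon)$ to upgrade pointwise convergence $L_k\to 0$ to uniform convergence on $[-h_0,h_0]$, then the submean-value inequality to pass from $L^1$ to $L^\infty$---is the standard one and is essentially what the cited source does.

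Two steps in your sketch do not close, however. First, the Hadamard three-lines interpolation you invoke to reach the closed edge $|\Im z|=h_0$ cannot produce a $\delta$-bound there: with a $\delta$-bound on $|\Im z|=h_0-r$ and a fixed a~priori bound $M$ on $|\Im z|=h_0$, three-lines evaluated at $h_0$ simply returns $M$, independently of $r$. The submean-value step genuinely needs a disc that protrudes past the edge, hence analyticity on a strictly wider strip; and even then the slices in $[h_0,h_0+r]$ carry the positive Lyapunov exponent just outside, which after dividing by $r$ contributes a fixed constant to $\tfrac1k\ln\|\cdot\|$ that does not vanish with $\delta$. So the argument as written only yields the bound on $|\Im z|\le h_0-r$ for each $r>0$. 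Second, your own rerouting through $A_{\lambda,E}$ and back via $Q_\lambda$ lands, by your admission, on the sub-strip $|\Im x|\le h_0/2$; that is a strictly weaker conclusion than the stated $\|\overline{A}_k\|_{h_0}$, not a bookkeeping step. Note also that $A_{\lambda,E}\notin\mathrm{SL}(2,\mathbb{C})$, so the nonnegativity $\ln\|A_{\lambda,E,k}\|\ge 0$ used in the submean step fails and must be replaced by a uniform-in-$k$ lower bound coming from the telescoping determinant $|\det A_{\lambda,E,k}(x)|=|c(x-\alpha)|/|c(x+(k-1)\alpha)|$.

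None of this damages the paper's applications: every use of the lemma in Sections~4 and~6 takes place on strips of width at most $h/3=\mathcal{L}_{\overline\lambda}/(600\pi)\ll h_0/2$, where your argument does deliver the estimate. But as a proof of the lemma \emph{as stated}, with the full $\tfrac{1}{2\pi}\mathcal{L}_{\overline\lambda}$-strip, the sketch is incomplete.
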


\begin{lemma}
We have for $l>l(\lambda,\alpha)$
\begin{equation}\label{are8}
\inf_{x\in \Delta_{\frac{h}{3}}}||U_\star^{I_1}(x)||\geq e^{-2C_1\beta(\alpha) l}.
\end{equation}
\end{lemma}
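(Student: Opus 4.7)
The approach is to bound $||U^{I_1}_\star||$ from below by first locating a single ``good'' point where it is sizable, then transporting this bound to any $z_*\in\Delta_{h/3}$ via the approximate cocycle equation (\ref{are5}). Since $||Q_\lambda||_h,||Q_\lambda^{-1}||_h\leq C_\star$ it is equivalent to bound $||U^{I_1}||$ from below.

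\textbf{Step 1 (find a good point via Fourier).} Consider the first component $f(x):=e^{2\pi i\theta}\sum_{k\in I_1}u_k e^{2\pi ikx}$ of $U^{I_1}(x)$. Its zeroth Fourier coefficient is $e^{2\pi i\theta}u_0=e^{2\pi i\theta}$, of modulus $1$. For any real $y$ with $|y|\leq h/3$, a direct computation gives
$$\int_0^1 f(x+iy)\,\mathrm{d}x=e^{2\pi i\theta}u_0=e^{2\pi i\theta},$$
and hence $\sup_{x\in\mathbb{R}/\mathbb{Z}}|f(x+iy)|\geq 1$. The horizontal translate $\tilde f(x):=f(x+iy)$ remains a trigonometric polynomial of essential degree at most $l$ (its Fourier coefficients being $u_k e^{-2\pi ky}$ supported in $I_1$), so Lemma~\ref{arl2} applied to $\tilde f$ gives, for every $x_0\in\mathbb{R}/\mathbb{Z}$, some $j\in\{0,1,\ldots,l\}$ with
$$|f(x_0+iy+j\alpha)|=|\tilde f(x_0+j\alpha)|\geq \frac{1}{C_1}\,e^{-C_1\beta(\alpha) l}.$$

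\textbf{Step 2 (transport via iterated cocycle).} Fix $z_*\in\Delta_{h/3}$ and write $z_*=x_0+iy$ with $x_0\in\mathbb{R}/\mathbb{Z}$, $|y|\leq h/3$. Step 1 furnishes $j_*\in\{0,\ldots,l\}$ with $|f(z_*+j_*\alpha)|\geq C_1^{-1}e^{-C_1\beta(\alpha)l}$, hence (since $f$ is one component of $U^{I_1}$ and $Q_\lambda^{-1}$ is bounded) $||U^{I_1}_\star(z_*+j_*\alpha)||\geq C_\star^{-1}e^{-C_1\beta(\alpha)l}$. Iterating (\ref{are5}) $j_*$ times gives the telescoping identity
$$e^{2\pi i j_*\theta}U^{I_1}_\star(z_*+j_*\alpha)=\overline{A}_{j_*}(z_*)\,U^{I_1}_\star(z_*)-\sum_{k=0}^{j_*-1}e^{2\pi ik\theta}\overline{A}_{j_*-1-k}(z_*+(k+1)\alpha)\,G_\star(z_*+k\alpha).$$
Using (\ref{are7}) with a small $\delta=\beta(\alpha)/2$ gives $||\overline{A}_{j_*}||_{h/3}\leq C_\star e^{\beta(\alpha) l/2}$, and using (\ref{are6}) together with $h\gg\beta(\alpha)$ (which follows from $\mathcal{L}_{\overline\lambda}\geq 10^4\epsilon_0\geq 10^6 C_1\beta(\alpha)$) the error sum has norm $\leq C_\star l\,e^{-2hl}$, which is negligible next to $e^{-C_1\beta(\alpha)l}$ for $l>l(\lambda,\alpha)$. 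Rearranging,
$$||U^{I_1}_\star(z_*)||\geq\frac{||U^{I_1}_\star(z_*+j_*\alpha)||-C_\star l\,e^{-2hl}}{||\overline{A}_{j_*}(z_*)||}\geq C_\star^{-1}\,e^{-(C_1+1/2)\beta(\alpha)l}\geq e^{-2C_1\beta(\alpha)l}$$
provided $l>l(\lambda,\alpha)$.

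\textbf{Main obstacle.} The delicate part is Step 1: we must convert the global information $u_0=1$ (a statement about the mean value of $f$) into a genuinely pointwise lower bound at some site of the finite orbit $\{z_*+j\alpha\}_{0\leq j\leq l}$. Lemma~\ref{arl2} performs exactly this inversion, but it charges a cost $e^{C_1\beta(\alpha)l}$ that ultimately dictates the Liouvillean exponent $2C_1\beta(\alpha)l$ in the conclusion. The secondary subtlety is that $z_*$ is complex, which is handled by applying Lemma~\ref{arl2} to the horizontal translate $\tilde f(x)=f(x+iy)$ rather than $f$ itself; the integrability of $\overline{A}$ on the strip $\Delta_{\mathcal{L}_{\overline\lambda}/(2\pi)}\supset\Delta_{h/3}$ (guaranteed by (\ref{are7})) lets us carry out the cocycle iteration at the complex point $z_*$ without loss.
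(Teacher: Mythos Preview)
Your proof is correct and uses essentially the same ingredients as the paper's proof: the zeroth Fourier coefficient $u_0=1$ on horizontal lines, Lemma~\ref{arl2}, and the approximate cocycle iteration (\ref{are5}) together with the bounds (\ref{are6})--(\ref{are7}). The only difference is cosmetic: the paper argues by contradiction (assume $||U_\star^{I_1}(x_0)||<e^{-2C_1\beta(\alpha)l}$, propagate smallness forward via (\ref{are5}) to all $x_0+j\alpha$ with $0\leq j\leq l$, then apply Lemma~\ref{arl2} to force $\sup_{x}||U^{I_1}(x+it)||<1$, contradicting the integral), whereas you run the same three steps in the direct/contrapositive order.
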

\begin{proof}Suppose there is some $x_0\in\Delta_{\frac{h}{3}}$ with $\Im x_0=t$ such that $||U_\star^{I_1}(x_0)||<e^{-2C_1\beta(\alpha) l}$. Then by iterating (\ref{are5}), one has for $k\in\mathbb{N}$
\begin{eqnarray*}
e^{2\pi i k\theta }U_{\star}^{I_1}(x_0+k\alpha)=&-&\sum_{j=1}^{k}e^{2\pi i(j-1)\theta}\overline{A}_{k-j}(x_0+j\alpha)G_{\star}(x_0+(j-1)\alpha)\\
&+&\overline{A}_{k}(x_0)U_{\star}^{I_1}(x_0).
\end{eqnarray*}
Thus using (\ref{are6}) and (\ref{are7}), we get $\sup\limits_{0\leq j\leq l}||U_{\star}^{I_1}(x_0+j\alpha)||\leq C_{\star}e^{-\frac{3}{2}C_1\beta(\alpha)l}$. Consequently, $\sup\limits_{0\leq j\leq l}||U^{I_1}(x_0+j\alpha)||\leq C_{\star}e^{-\frac{3}{2}C_1\beta(\alpha)l}$. By (\ref{are1}) of Lemma \ref{arl2}, we have for $l>l(\lambda,\alpha)$ $$\sup_{x\in\mathbb{R}/\mathbb{Z}}||U^{I_1}(x+i t)||\leq e^{-\frac{1}{3}C_1\beta(\alpha)l},$$
which is  contradicted to  $||\int_{\mathbb{R}/\mathbb{Z}}U^{I_1}(x+it)\mathrm{d}x||\geq1$ (for $u_0=1$).
\end{proof}

\begin{lemma}\label{art1}
For any $m\in\mathbb{Z}$ satisfying $m>m(\lambda,\alpha)$, we have
\begin{equation}\label{are9}
||\overline{A}_m||_{{\beta(\alpha)}}\leq m^{5100}.
\end{equation}
\end{lemma}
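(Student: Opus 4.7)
The plan is to convert the almost-invariant-section structure (\ref{are5})--(\ref{are8}) into an approximate diagonalization of $\overline{A}_{\lambda,E}$ on the strip $\Delta_{\beta(\alpha)}$ and then iterate. A careful tuning of the truncation scale $l$ via Lemma \ref{arl3}(ii) converts the polynomial factors coming from the conjugation matrix into a polynomial-in-$m$ bound much sharper than $m^{5100}$.

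First, for $m > m(\lambda,\alpha)$ I would pick $l = rq_s - 1$ via Lemma \ref{arl3}(ii) with $\ln m / h \le l \le 1700\ln m / h$ and $l \in (9|n_j|,\,|n_{j+1}|/9)$. This choice turns (\ref{are6}) into $\|G_\star\|_{h/3} \le C_\star m^{-3}$ and (\ref{are8}) into $\inf_{\Delta_{h/3}}\|U_\star^{I_1}\| \ge e^{-2C_1\beta(\alpha) l}$. The standing relations $h = \mathcal{L}_{\overline\lambda}/(200\pi)$, $\epsilon_0 = \mathcal{L}_{\overline\lambda}/10^5$ and $\epsilon_0 \ge 100 C_1\beta(\alpha)$ force $C_1\beta(\alpha)/h$ to be an absolute constant bounded by $2\pi \cdot 10^{-5}$, so that $e^{\pm 2C_1\beta(\alpha) l} \le m^{\pm\eta_0}$ for a tiny absolute constant $\eta_0$.

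Next, I would introduce the holomorphic partner $U^*(x) := \overline{U_\star^{I_1}(\bar x)}$; by reality of $\overline{A}_{\lambda,E}$ this satisfies the conjugate version of (\ref{are5}) with eigenvalue $e^{-2\pi i\theta}$ and an identical error estimate. Assembling $M(x) := (U_\star^{I_1}(x),\,U^*(x))$ and setting $B(x) := M(x)/\sqrt{\det M(x)}$ produces an $\mathrm{SL}(2,\mathbb{C})$-valued analytic conjugation on $\Delta_{\beta(\alpha)}$ with $\|B\|_{\beta(\alpha)},\,\|B^{-1}\|_{\beta(\alpha)} \le C_\star m^{\eta_0}$. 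A direct calculation then produces
$$A_\star(x) := B^{-1}(x+\alpha)\overline{A}_{\lambda,E}(x) B(x) = D + F(x),\qquad D = \mathrm{diag}(e^{2\pi i\theta},\, e^{-2\pi i\theta}),$$
with $\|F\|_{\beta(\alpha)} \le C_\star m^{-3 + 2\eta_0}$. Since $D$ is unitary and $m\|F\|_{\beta(\alpha)} \to 0$, iteration gives $\|A_\star^{(m)}\|_{\beta(\alpha)} \le (1+\|F\|_{\beta(\alpha)})^m \le e$, and unwinding the conjugation yields
$$\|\overline{A}_m\|_{\beta(\alpha)} \le \|B\|_{\beta(\alpha)}^2 \,\|A_\star^{(m)}\|_{\beta(\alpha)} \le C_\star^2\, e\cdot m^{2\eta_0},$$
comfortably below $m^{5100}$ for $m$ large.

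The main technical obstacle lies in securing a uniform lower bound $|\det M(x)| \ge e^{-4C_1\beta(\alpha) l}$ on $\Delta_{\beta(\alpha)}$: on the real axis, $\det M = 2i\,\Im\bigl(U_{\star,1}^{I_1}\,\overline{U_{\star,2}^{I_1}}\bigr)$ is not immediately controlled by the pointwise norm lower bound (\ref{are8}). One must extract a quantitative non-degeneracy of the complex section $U_\star^{I_1}$, combining (\ref{are8}) with the fact that $e^{2\pi i\theta}$ is bounded away from $\pm 1$ at scales between $|n_j|$ and $|n_{j+1}|$ in the $\epsilon_0$-resonance pattern of $\theta$; subharmonicity of $\log|\det M|$ then propagates the real-axis lower bound to the complex strip, the cost of shrinking from $h/3$ to $\beta(\alpha)$ being absorbed in $\eta_0$.
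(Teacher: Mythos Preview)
Your proposal has a genuine gap at exactly the point you flag: the lower bound on $|\det M(x)|$ is not established, and the fixes you sketch do not work. First, subharmonicity of $\log|\det M|$ (which holds because $\det M$ is holomorphic) only propagates \emph{upper} bounds from the boundary of a strip to the interior; it gives no lower bound whatsoever, and in fact cannot rule out zeros of $\det M$ in $\Delta_{\beta(\alpha)}$. Second, the $\epsilon_0$-resonance structure controls $\|2\theta - n_j\alpha\|_{\mathbb{R}/\mathbb{Z}}$, not $\|2\theta\|_{\mathbb{R}/\mathbb{Z}}$, so there is no a priori separation of $e^{2\pi i\theta}$ from $\pm 1$ at the scales you need. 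Third, and most importantly, the paper does eventually prove a quantitative non-degeneracy of this type (Lemma~\ref{det}), but that proof \emph{uses} (\ref{are9}) as an input (see the line invoking (\ref{are9}) to reach (\ref{are17})). Invoking it here would be circular.

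The paper's proof avoids the determinant issue entirely by a different construction. Instead of pairing $U_\star^{I_1}$ with its holomorphic conjugate, it invokes the corona-type Lemma~\ref{arll}, which from the bounds $e^{-2C_1\beta(\alpha)l}\le\|U_\star^{I_1}\|\le e^{3\pi\beta(\alpha)l}$ alone produces an $\mathrm{SL}(2,\mathbb{C})$-valued $B$ with first column $U_\star^{I_1}$ and $\|B^{\pm1}\|_{\beta(\alpha)}\le e^{5C_1\beta(\alpha)l}$; no information about a second direction is needed. The price is that the conjugated cocycle is only upper-triangular up to $O(e^{-2hl})$, with an uncontrolled upper-right entry $b$ of size $e^{11C_1\beta(\alpha)l}$. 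A diagonal rescaling $\mathrm{diag}(e^{-hl/4},e^{hl/4})$ then trades size between $b$ and the already-small lower-left entries, making the whole perturbation $O(e^{-hl/4})$; iterating $e^{hl/4}$ steps and using $l\le 1700\,h^{-1}\ln m$ from Lemma~\ref{arl3}(ii) gives $\|\overline{A}_m\|_{\beta(\alpha)}\le e^{3hl}\le m^{5100}$. This is coarser than the $m^{2\eta_0}$ you aimed for, but it closes without the missing determinant estimate.
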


\begin{proof} Let us recall a useful lemma first.

\begin{lemma}[\cite{AA2008},\cite{Trent},\cite{Uch}]\label{arll}
Given $\eta>0$, let $U:\mathbb{R}/\mathbb{Z}\rightarrow \mathbb{C}^{2}$ be analytic on $\Delta_{\eta}$ and satisfy $\delta_1\leq||U(x)||\leq \delta_2^{-1} \ \mathrm{for}\ \forall x\in\Delta_{\eta}$. Then there exists some $B(x):\mathbb{C}/\mathbb{Z}\rightarrow \mathrm{SL}(2,\mathbb{C})$ which is  analytic on $\Delta_{\eta}$ and has first column $U(x)$ such that  $||B||_{\eta}\leq C_2\delta_1^{-2}\delta_2^{-1}(1-\ln(\delta_1\delta_2))$, where $C_2>0$ is some absolute constant.
\end{lemma}
 Since $|u_k|\leq 1$ and (\ref{are8}), we have $ e^{-2C_1\beta(\alpha) l} \leq||U^{I_1}_\star||_{\beta(\alpha)}\leq e^{3\pi\beta(\alpha)l}$ for $l>l(\lambda,\alpha)$. Supposing now $B(x)$ is as in Lemma \ref{arll}  with $U(x)=U^{I_1}_\star(x)$ and $\eta=\beta(\alpha)$, then $||B||_{\beta(\alpha)},||B^{-1}||_{\beta(\alpha)}\leq e^{5C_1\beta(\alpha) l}$. From (\ref{are5}), we have
\begin{equation}\label{are10}
B^{-1}(x+\alpha)\overline{A}(x)B(x)=\left[\begin{array}{cc}e^{2\pi i\theta}&0\\
0&e^{-2\pi i\theta}\end{array}\right]+\left[\begin{array}{cc}\beta_1{(x)}&b(x)\\
\beta_2{(x)}&\beta_3{(x)}\end{array}\right].
\end{equation}
From (\ref{are6}) and (\ref{are10}), we have $||\beta_1||_{\beta(\alpha)},||\beta_2||_{\beta(\alpha)}\leq e^{-2hl}$ and $||b||_{\beta(\alpha)}\leq  e^{11C_1\beta(\alpha) l}$. Thus by taking determinant on (\ref{are10}) and noting $\overline{A},B\in \mathrm{SL}(2,\mathbb{C})$, one has $||\beta_3||_{\beta(\alpha)}\leq e^{-hl}$. Let
$B_1(x)=\left[\begin{array}{cc}e^{-\frac{hl}{4}}&0\\
0&e^{\frac{hl}{4}}\end{array}\right]B^{-1}(x)$. Then by (\ref{are10}), we have
\begin{equation}\label{are11}
B_1(x+\alpha)\overline{A}(x)B_1^{-1}(x)=\left[\begin{array}{cc}e^{2\pi i\theta}&0\\
0&e^{-2\pi i\theta}\end{array}\right]+H(x),
\end{equation}
where $||H||_{\beta(\alpha)} \leq e^{-\frac{hl}{4}}, ||B_1||_{\beta(\alpha)},||B_1^{-1}||_{\beta(\alpha)}\leq e^{hl}$. Thus by iterating (\ref{are11}) at most $e^{\frac{hl}{4}}$ steps, one has for $l>l(\lambda,\alpha)$
$$\sup\limits_{0\leq s\leq e^{\frac{hl}{4}}}||\overline{A}_s||_{_{\beta(\alpha)}}\leq e^{3hl}.$$
Recalling (\ref{are3}), we have $||\overline{A}_m||_{\beta(\alpha)}\leq m^{5100}$.
\end{proof}

In the following, we fix $n=|n_j|,N=|n_{j+1}|$. We let $I_2=\left[-\lfloor\frac{N}{9}\rfloor,\lfloor\frac{N}{9}\rfloor\right]$ and define $U^{I_2},U_\star^{I_2}$ with $I_1$ being replaced by $I_2$ as previous.
\begin{lemma}\label{ustar}
We have for $n>n(\lambda,\alpha)$
\begin{equation}\label{are12}
\inf_{x\in \Delta_{\frac{h}{3}}}||U_\star^{I_2}(x)||\geq e^{-63C_1\beta(\alpha) n}.
\end{equation}
\end{lemma}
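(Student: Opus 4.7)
The plan is to mimic the proof of (\ref{are8}) by contradiction, now for $U^{I_2}_{\star}$, and to reduce matters back to $U^{I_1}_{\star}$ by using the almost-localization decay of $u_{k}$ to control $U^{I_2}-U^{I_1}$. I would suppose for contradiction that $||U^{I_2}_{\star}(x_{0})||<e^{-63C_{1}\beta(\alpha)n}$ for some $x_{0}\in\Delta_{h/3}$. The new ingredient is the tail comparison: for each $k\in I_2\setminus I_1$ one has $\lfloor l/2\rfloor<|k|\le\lfloor N/9\rfloor$, and since $l>9n$ and $N>40Xn$ these $k$ satisfy $3n<|k|<N/3$, so (\ref{are2}) gives $|u_{k}|\le C_{\star}e^{-2\pi h|k|}$. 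Summing the resulting geometric series on the strip $\Delta_{h/3}$ and applying the uniformly bounded $Q_{\lambda}$ produces
\begin{equation*}
||U^{I_2}_{\star}-U^{I_1}_{\star}||_{h/3}\le C_{\star}e^{-2\pi hl/3}.
\end{equation*}
Since $h\gg C_{1}\beta(\alpha)$ (by $\mathcal{L}_{\overline{\lambda}}\ge 10^{4}\epsilon_{0}$) and $l>9n$, this tail is far smaller than $e^{-63C_{1}\beta(\alpha)n}$, so the triangle inequality forces $||U^{I_1}_{\star}(x_{0})||\le 2e^{-63C_{1}\beta(\alpha)n}$.

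From this point I would follow the proof of (\ref{are8}) almost verbatim. Iterating the cocycle equation (\ref{are5}) at $x_{0}$ for $0\le k\le l$, and using (\ref{are7}) with a very small $\delta$ together with the error bound (\ref{are6}) so that the $e^{-3hl}$ contribution gets absorbed against $e^{-63C_{1}\beta(\alpha)n}$, gives
\begin{equation*}
\sup_{0\le k\le l}||U^{I_1}_{\star}(x_{0}+k\alpha)||\le C_{\star}(\delta)e^{\delta l-63C_{1}\beta(\alpha)n}.
\end{equation*}
Dividing by $Q_{\lambda}$ and applying Lemma \ref{arl2} componentwise to $U^{I_1}(\cdot+i\Im x_{0})$ (whose essential degree is exactly $l=rq_{s}-1<q_{s+1}$) then yields
\begin{equation*}
\sup_{\mathbb{R}/\mathbb{Z}}||U^{I_1}(x+i\Im x_{0})||\le C_{\star}(\delta)C_{1}e^{(C_{1}\beta(\alpha)+\delta)l-63C_{1}\beta(\alpha)n},
\end{equation*}
which contradicts $||\int_{\mathbb{R}/\mathbb{Z}}U^{I_1}(x+i\Im x_{0})\,\mathrm{d}x||\ge|u_{0}|=1$ as soon as $(C_{1}\beta(\alpha)+\delta)l<63C_{1}\beta(\alpha)n$, i.e.\ essentially $l<63n$ once $\delta$ is taken small enough.

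The main obstacle is therefore not the iteration itself but this numerical constraint on the $l\in(9n,N/9)$ furnished by Lemma \ref{arl3}(i): one needs to know that the window $(9n,N/9)$ contains an admissible $l=rq_{s}-1$ of size $O(n)$, well below $63n$. For $n$ large this is a statement purely about the continued fraction of $\alpha$ at scale $n$, and should be arranged by picking the appropriate index $s$ with $q_{s}$ comparable to $9n$ (or the first small multiple of the preceding $q_{s}$ exceeding $9n$), using $N\ge40Xn\gg n$ to absorb the admissibility constraint $r\le\lfloor q_{s+1}/q_{s}\rfloor$. This is the content of Lemma \ref{arl3}(i), whose quantitative continued-fraction bookkeeping is deferred to Appendix A; once such an $l$ is fixed, the rest is a direct transcription of the proof of (\ref{are8}).
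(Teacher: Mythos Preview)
Your proposal is correct and is essentially the paper's approach, with one cosmetic difference. Both arguments hinge on the same two ingredients you identify: (a) choosing an admissible $l=rq_s-1$ of size $O(n)$ (the paper takes $q_s<22n\le q_{s+1}$ and obtains $9n<l<31n$, comfortably inside your constraint $l<63n$), and (b) the tail comparison $\|U_\star^{I_2}-U_\star^{J}\|_{h/3}\le e^{-hl}$ coming from almost localization, where $J$ is the length-$l$ interval.

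The difference is that the paper does not redo the contradiction argument of (\ref{are8}) inline. Once the interval $J$ with $9n<l<31n$ is fixed, (\ref{are8}) applies verbatim to $U_\star^{J}$ and already gives $\inf_{\Delta_{h/3}}\|U_\star^{J}\|\ge e^{-2C_1\beta(\alpha)l}\ge e^{-62C_1\beta(\alpha)n}$; a single triangle inequality against the tail $e^{-hl}$ then yields the claim. Your route---assume a bad $x_0$, transfer smallness to $U_\star^{J}(x_0)$, iterate, apply Lemma~\ref{arl2}, and contradict $u_0=1$---reproduces the internals of (\ref{are8}) and arrives at the same place, but the black-box application is shorter and makes the role of the constant $63=2\cdot31+1$ transparent.
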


\begin{proof}
We select $q_s<22n\leq q_{s+1}$. Following the proof of Lemma \ref{arl3}, we can find $l=rq_s-1<q_{s+1}$ such that $9n<l<31n$. Define $J=\left[-\lfloor\frac{l}{2}\rfloor,l-\lfloor\frac{l}{2}\rfloor\right]$ and $U^{J},U_\star^{J}$ with $I_1$ being replaced by $J$ as previous.
From almost localization result and $||Q_\lambda||_h\leq C_\star$, we have $||U_\star^{I_2}-U_{\star}^{J}||_{\frac{h}{3}}\leq e^{-hl}$ for $n>n(\lambda,\alpha)$. Then by (\ref{are8}), one has
\begin{eqnarray*}
\inf_{x\in \Delta_{\frac{h}{3}}}||U_\star^{I_2}(x)||\geq e^{-2C_1\beta(\alpha) l}- e^{-hl}\geq e^{-63C_1\beta(\alpha) n}.
\end{eqnarray*}
\end{proof}
Let $$U_\dag(x)=e^{\pi n_j ix} U_\star^{I_2}(x)$$
and  $B(x)=\left(U_\dag(x), \overline{U_\dag(x)}\right)$, where $\overline{U_\dag}$ denotes the complex conjugate of $U_\dag$. Similarly to (\ref{are5}), we have for $n>n(\lambda,\alpha)$
\begin{equation}\label{are13}
\overline{A}(x)U_\dag(x)=e^{2\pi i\widetilde{\theta}}U_{\dag}(x+\alpha)+G_{\dag}(x),\ ||G_\dag||_{\frac{h}{3}}\leq e^{-\frac{h N}{10}}.
\end{equation}
Define $Z^{-1}=||2\theta-n_j\alpha||_{\mathbb{R}/\mathbb{Z}}$. Then by (\ref{nN}), we have
\begin{equation}\label{are14}
e^{\epsilon_0 n}\leq Z\leq e^{3\beta(\alpha) N}.
\end{equation}

\begin{lemma}\label{det}
We have for $n>n(\lambda,\alpha)$
\begin{equation}\label{are15}
\inf_{x\in\mathbb{R}/\mathbb{Z}}|\det(B(x))|\geq Z^{-5110}.
\end{equation}
\end{lemma}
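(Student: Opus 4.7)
Plan: The determinant $\det B(x)=U_\dag^{(1)}(x)\overline{U_\dag^{(2)}(x)}-U_\dag^{(2)}(x)\overline{U_\dag^{(1)}(x)}$ is, for real $x$, purely imaginary and equal to $2i\,\mathrm{Im}(U_\dag^{(1)}(x)\overline{U_\dag^{(2)}(x)})$. My first step is to show that $\det B$ is \emph{nearly invariant} under $x\mapsto x+\alpha$. Since $\overline{A}(x)\in\mathrm{SL}(2,\mathbb{R})$ for real $x$, complex-conjugating (\ref{are13}) gives $\overline{A}(x)\overline{U_\dag(x)}=e^{-2\pi i\tilde\theta}\overline{U_\dag(x+\alpha)}+\overline{G_\dag(x)}$. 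Packaging both identities yields $\overline{A}(x)B(x)=B(x+\alpha)\,\mathrm{diag}(e^{-2\pi i\tilde\theta},e^{2\pi i\tilde\theta})+E(x)$ with $\|E\|_0\le C_\star e^{-hN/10}$. Taking determinants and using $\det\overline{A}=1$ and $\|B\|_0\le C_\star N$, I obtain $|\det B(x+\alpha)-\det B(x)|\le C_\star N e^{-hN/10}$, which is far smaller than $Z^{-5110}$ since (\ref{are14}) forces $Z\le e^{3\beta(\alpha)N}$ and $h\gg\beta(\alpha)$; iterating along $\{j\alpha\}$ and invoking equidistribution then shows that $\inf|\det B|$ and $\sup|\det B|$ differ by a negligible amount.

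Second, I observe that the phase factors $e^{\pm\pi in_jx}$ cancel inside $\det B$, so $\det B$ is a genuine $1$-periodic analytic function on $\Delta_{h/3}$. After truncating the Fourier expansions of $Q_\lambda,Q_\lambda^{-1}$ at frequency $O(N)$, at the cost of an exponentially small error, $\det B$ differs from a trigonometric polynomial of essential degree $L\lesssim N$ by a negligible term. By Lemma \ref{arl3}(ii), I choose $l=rq_s-1<q_{s+1}$ with $L\le l=O(N)$. Then Lemma \ref{arl2} gives, contrapositively, that for any $x_0$ some $j\in[0,l]$ satisfies $|\det B(x_0+j\alpha)|\ge\|\det B\|_0/(C_1 e^{C_1\beta(\alpha)l})$. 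Combined with Step 1, the inequality $\inf|\det B|\ge Z^{-5110}$ reduces to showing $\|\det B\|_0\ge Z^{-c_1}$ for some moderate constant $c_1$.

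Third, I will establish this $L^\infty$-lower bound by computing the zeroth Fourier coefficient $[\det B]$ explicitly. Writing $f(x)=\sum_{k\in I_2}u_ke^{2\pi ikx}$ and separating the $Q_\lambda$-contribution (which is bounded by $C_\star$), the leading part of $[\det B]$ is a sum of the form $2i\sum_{k\in I_2}|u_k|^2\sin 2\pi(\theta+k\alpha)$. The almost localization bound (\ref{are2}) makes all terms with $3n<|k|<N/3$ exponentially small, so only the $k=0$ contribution $|u_0|^2\sin 2\pi\theta$ and the ``mirror'' contribution at $k=n_j$ are significant. Using the Aubry-dual identity (\ref{dualre}) together with the resonance relation $\|2\theta-n_j\alpha\|=Z^{-1}$, the $k=n_j$ term differs from $-|u_{n_j}|^2\sin 2\pi\theta$ by $O(Z^{-1})$, so the two large terms cannot cancel beyond a residue of order $Z^{-c_1}$; tracking the $Q_\lambda$-corrections yields $|[\det B]|\ge Z^{-c_1}$, and hence $\|\det B\|_0\ge Z^{-c_1}$.

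The main obstacle is this third step: isolating the mirror resonant mass at $k\approx n_j$ in the Fourier expansion of $u$, and then verifying that the near-symmetry coming from the Aubry duality leaves a residue of exactly the right size rather than a complete cancellation. The explicit exponent $5110$ in the statement is tuned to absorb the polynomial-in-$Z$ losses accumulated from $C_1 e^{C_1\beta(\alpha)l}$ in Step 2, from the $Q_\lambda,Q_\lambda^{-1}$ truncations, and from the compounded matrix norm estimate $\|\overline{A}_m\|\le m^{5100}$ of Lemma \ref{art1} that enters when propagating approximate identities across many iterates; careful bookkeeping of these contributions, rather than any new conceptual input, completes the proof.
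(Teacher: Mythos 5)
Your Steps 1 and 2 are sound: the near-invariance $\det B(x+\alpha)=\det B(x)+O(e^{-hN/10})$ does follow by taking determinants in the near-eigenequation (the paper exploits exactly this fact as equation (\ref{are27}), though it does so \emph{after} Lemma \ref{det}, using the lemma's conclusion as an input), and the reduction via Fourier truncation and Lemma \ref{arl2} to a lower bound on $\|\det B\|_0$, or equivalently on $|[\det B]|$, is legitimate. The problem is Step 3, and it is a genuine gap, not a bookkeeping obstacle.

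You compute $[\det B]$ (modulo the $Q_\lambda$ factors) as $2i\sum_{k\in I_2}|u_k|^2\sin 2\pi(\theta+k\alpha)$ and assert that only $k=0$ and $k\approx n_j$ (really $k=-n_j$, the mode that pairs with $k=0$ under $2\theta\approx n_j\alpha$) contribute significantly. That is not supported by what is available. Almost localization (\ref{are2}) gives $|u_k|\leq C_\star e^{-2\pi h|k|}$ only for $3|n_j|<|k|<|n_{j+1}|/3$; for $0<|k|\leq 3|n_j|$ there is no smallness whatsoever, only $|u_k|\leq 1$. So the ``main range'' of the sum is the whole window $|k|\leq 3|n_j|$, which can contain on the order of $n$ unconstrained terms of size up to $1$. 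There is no a priori reason this sum cannot be smaller than $Z^{-5110}$; the signs $\sin 2\pi(\theta+k\alpha)$ oscillate and nothing in Aubry duality forbids massive cancellation across that window. You acknowledge the cancellation issue for the $k=0$ vs.\ $k\approx n_j$ pair, but the larger problem is that you have not isolated only two terms in the first place. The exponent $5110$ does not arise from accumulated polynomial losses to be absorbed; it arises because the actual argument produces a bound of precisely that strength and no better.

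The paper's proof avoids averaging entirely. It argues by contradiction: if $|\det B(x_0)|$ were small at some $x_0$, the lower bound (\ref{are12}) on $\|U_\dag\|$ would force $U_\dag(x_0)\approx \mu_0\overline{U_\dag(x_0)}$ for some $|\mu_0|\leq 1$. One then propagates this near-proportionality along the orbit using (\ref{are13}) and the $\overline{A}_m$ bound of Lemma \ref{art1}: since $\|2\widetilde\theta\|=Z^{-1}$ is tiny, the rotation factors $e^{\pm 2\pi ij\widetilde\theta}$ stay near $1$ for $j\lesssim Z^{1/6}$ (giving (\ref{are18})), but flip sign near $j\approx Z/4$ (giving (\ref{are19})). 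A Fourier-truncation step combined with Lemma \ref{arl2} promotes (\ref{are18}) to a sup-over-$x$ estimate (\ref{aree}); applying it at $x_0+\lfloor Z/4\rfloor\alpha$ and combining with (\ref{are19}) forces $\|U_\dag\|$ to be small there, contradicting (\ref{are12}). The key input is the dynamical interplay between the slow rotation angle $\widetilde\theta$ and the lower bound on $\|U_\dag\|$; nothing about the Fourier coefficients $u_k$ in the inner window $|k|\leq 3|n_j|$ needs to be understood. To repair your argument you would need to import precisely this mechanism, at which point you would be re-proving the lemma the paper's way rather than via $[\det B]$.
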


\begin{proof}
Note first  that $|\det(B(x))|=||U_\dag(x)||\min\limits_{\mu\in\mathbb{C}}||U_\dag(x)-\mu \overline{U_\dag(x)}||$, where the minimizing $\mu$ satisfies  $||\mu U_\dag(x)||\leq ||\overline{U_\dag(x)}||$ (i.e. $|\mu|\leq1$).
 Assume (\ref{are15}) is not true and $n>n(\lambda,\alpha)$. Then by (\ref{are12}) and  (\ref{are14}), there are some $\mu_0\in\mathbb{C}$ with $|\mu_0|\leq 1$ and some $x_0\in\mathbb{R}/\mathbb{Z}$ such that
\begin{equation}\label{are16}
||U_\dag(x_0)-\mu_0 \overline{U_\dag(x_0)}||\leq Z^{-5109}.
\end{equation}
By (\ref{are13}), we have for $m\in\mathbb{N}$
\begin{eqnarray*}
&&||e^{2\pi im\widetilde{\theta}}U_\dag(x_0+m\alpha)-\mu_0e^{-2\pi im\widetilde{\theta}}\overline{U_\dag(x_0+m\alpha)}||\\
&\leq&||\sum_{j=0}^{m-1}\overline{A}_{m-j}(x_0+j\alpha)G_\dag(x_0+j\alpha)-\mu_0\sum_{j=0}^{m-1}\overline{A}_{m-j}(x_0+j\alpha)\overline{G_\dag(x_0+j\alpha)}||\\
&&+||\overline{A}_{m}(x_0)(U_\dag(x_0)-\mu_0 \overline{U_\dag(x_0)})||.
\end{eqnarray*}
Then from (\ref{are9}) and (\ref{are16}), we have
\begin{equation}\label{are17}
\sup_{0\leq j\leq Z}||e^{2\pi ij\widetilde{\theta}}U_\dag(x_0+j\alpha)-\mu_0e^{-2\pi ij\widetilde{\theta}}\overline{U_\dag(x_0+j\alpha)}||\leq Z^{-8}.
\end{equation}
Recalling the definition of $\widetilde{\theta}$, we get for $0\leq j\leq Z^{\frac{1}{6}}$
\begin{equation*}
||e^{4\pi ij\widetilde{\theta}}-1||_{\mathbb{R}/\mathbb{Z}}\leq 10j||2\widetilde{\theta}||_{\mathbb{R}/\mathbb{Z}}\leq 10Z^{-\frac{5}{6}}.
\end{equation*}
Then from (\ref{are17}), one has $||U_\dag||_0\leq C_\star n$.  By using  the trigonometrical inequality, we obtain for $n>n(\lambda,\alpha)$
\begin{equation}\label{are18}
\sup_{0\leq j\leq Z^{\frac{1}{6}}}||U_\dag(x_0+j\alpha)-\mu_0\overline{U_\dag(x_0+j\alpha)}||\leq  Z^{-0.83}.
\end{equation}
Let $j=\lfloor\frac{Z}{4}\rfloor$ and note $||\frac{x-\lfloor x\rfloor}{\lfloor x\rfloor}||_{\mathbb{R}/\mathbb{Z}}<||x^{-1}||_{\mathbb{R}/\mathbb{Z}}$ ($x\gg 1$). Then from (\ref{are17}) and the trigonometrical inequality, we have for $n>n(\lambda,\alpha)$
\begin{equation}\label{are19}
||U_\dag(x_0+\lfloor \frac{Z}{4}\rfloor\alpha)+\mu_0\overline{U_\dag(x_0+\lfloor \frac{Z}{4}\rfloor\alpha)}||\leq Z^{-\frac{11}{12}}.
\end{equation}
For any large $K>0$ and any analytic function $f(x)=\sum\limits_{k\in\mathbb{Z}}f_ke^{2\pi kix}$, we define  $(\Gamma_Kf)(x)=\sum\limits_{|k|\leq K}f_ke^{2\pi kix}$. In addition, if $U(x)=\left(\begin{array}{c}f_1(x)\\ f_{2}(x)\end{array}\right)$, we let
$$(\Gamma_KU)(x)=\left(\begin{array}{c}(\Gamma_Kf_1)(x)\\ (\Gamma_Kf_{2})(x)\end{array}\right).$$
In the following, we take
\begin{equation}\label{k}
K\sim \frac{\ln Z}{24C_1\beta(\alpha)}-\frac{n}{4}
\end{equation}
and write $\Theta=\Gamma_{2K}\left(e^{-\pi n_j ix}\cdot U_\dag^K \right)$, where $U_\dag^K(x)=Q_{\lambda}(x)e^{\pi n_jix}(\Gamma_KU^{I_2})(x)$.
From (\ref{are14}) and (\ref{k}), we have $K\in (3n,\frac{1}{3}N)$ and for $n>n(\lambda,\alpha)$
\begin{eqnarray}\label{are20}
||U_\dag-U_\dag^K||_0\leq e^{-3hK}\ll Z^{-1}.
\end{eqnarray}
Since $Q_{\lambda}(x)$ is analytic on $\Delta_{\frac{1}{4\pi}\mathcal{L}_{\overline{\lambda}}}$, we get for $n>n(\lambda,\alpha)$
\begin{eqnarray}
\nonumber||\Theta-e^{-\pi n_j ix}U_\dag^K||_0&\leq& \sum_{|k|>2K,|j|\leq K}||\widehat{Q}(k-j)\widehat{U^{I_2}}(j)||\\
\nonumber&\leq&C_\star\sum_{|k|>2K,|j|\leq K}e^{-\mathcal{L}_{\overline{\lambda}}(|k|-|j|)} \\
\label{are21}&\leq &e^{-3hK}\ll Z^{-1}.
\end{eqnarray}
Thus combining (\ref{are20}) with (\ref{are21}), one has
\begin{equation}\label{are22}
||e^{\pi n_j ix}\Theta-U_\dag||_0\leq 2e^{-3hK}\ll Z^{-1}.
\end{equation}
Recalling (\ref{are18}), we have for $n>n(\lambda,\alpha)$
\begin{equation}\label{are23}
\sup_{0\leq j\leq Z^{\frac{1}{6}}}||e^{2\pi in_j(x_0+j\alpha)}\Theta(x_0+j\alpha)-\mu_0\overline{\Theta(x_0+j\alpha)}||\leq  Z^{-0.82}.
\end{equation}
Note that each coordinate of the left hand side of (\ref{are23}) is some polynomial having essential degree at most $4K+n$. Then by Lemma \ref{arl2}, we obatin
\begin{equation}\label{aree}
\sup_{x\in\mathbb{R}/\mathbb{Z}}||e^{2\pi i n_jx}\Theta(x)-\mu_0\overline{\Theta(x)}||\leq C_\star e^{C_1(4K+n)\beta(\alpha)}Z^{-0.82}.
\end{equation}
Recalling (\ref{k}) and (\ref{are22}), one has for $n>n(\lambda,\alpha)$
\begin{eqnarray*}
\sup_{x\in\mathbb{R}/\mathbb{Z}}||U_\dag(x)-\mu_0\overline{U_\dag(x)}||\leq 2Z^{-1}+Z^{-0.65}.
\end{eqnarray*}
Hence from (\ref{are19}), we have for $n>n(\lambda,\alpha)$
\begin{eqnarray*}
||U_\star^{I_2}(x_0+\lfloor \frac{Z}{4}\rfloor\alpha)||&=&||U_\dag(x_0+\lfloor \frac{Z}{4}\rfloor\alpha)||\\
&\leq &Z^{-0.64}\leq e^{-64C_1\beta(\alpha) n},
\end{eqnarray*}
which is contradicted to (\ref{are12}).
\end{proof}

We can prove our main theorem of this section.

\textbf{Proof of Theorem \ref{art}}
\begin{proof}
By taking $S=\Re U_\dag, T=\Im U_{\dag}$ on $\mathbb{R}/\mathbb{Z}$, then $B=[S,\pm T]\left[\begin{array}{cc}1&1\\
\pm i&\mp i\end{array}\right]$.  We let $W_1$ be the matrix with columns $S,\pm T$ such that $\det(W_1)>0$. Then by (\ref{are13}), we have
\begin{equation}\label{are26}
\overline{A}W_1(x)=W_1(x+\alpha)\cdot R_{\pm\widetilde{\theta}}+O(e^{-\frac{h}{10}N}).
\end{equation}
Noting $\det(W_1)>0$, we let $W=\frac{W_1}{\sqrt{\det(W_1)}}=\frac{W_1}{\sqrt{\frac{|\det(B)|}{2}}}$. Then $W\in C^{\omega}(\mathbb{R}/\mathbb{Z},\mathrm{PSL}(2,\mathbb{R}))$.

We first show that (\ref{are24}) and (\ref{are25}) are true. Actually, from (\ref{are13}), one has
\begin{equation*}
B(x+\alpha)=\left[\begin{array}{cc}e^{-2\pi i\widetilde{\theta}}&0\\
0&e^{2\pi i\widetilde{\theta}}\end{array}\right]\overline{A}(x)B(x)+O(e^{-\frac{h}{10}N}).
\end{equation*}
Then by taking determinant, we get
\begin{equation}\label{are27}
\det(B(x+\alpha))=\det(B(x))+O(e^{-\frac{h}{10}N}).
\end{equation}
Recalling (\ref{are15}),  $C_1\gg1$ and (\ref{are27}), we have for $n>n(\lambda,\alpha)$
\begin{equation}\label{are28}
\left|1-\frac{\sqrt{|\det(B(x+\alpha))|}}{\sqrt{|\det(B(x))|}}\right|\leq \sqrt{e^{-\frac{h}{10}N}\cdot Z^{5110}}\leq e^{-\frac{h}{25}N}.
\end{equation}
It is easy to see $||W||_0,||W^{-1}||_{0}\leq Z^{3000}$ for $n>n(\lambda,\alpha)$. Then from (\ref{are26}) and (\ref{are28}), one has
\begin{eqnarray}
\nonumber&&\sup_{x\in\mathbb{R}/\mathbb{Z}}||W^{-1}(x+\alpha)\overline{A}(x)W(x)-R_{\pm\widetilde{\theta}}||\\
\nonumber&&\leq\left|1-\frac{\sqrt{|\det(B(x+\alpha))|}}{\sqrt{|\det(B(x))|}}\right|+e^{-\frac{h}{20}N}\\
\label{are29}&& \leq e^{-\frac{h}{29}N}.
\end{eqnarray}
Let $m_j=\deg(W)$. Then by (\ref{rr}) and (\ref{are29}), we prove (\ref{are24}) and (\ref{are25}).

In the following, we will prove $|m_j|\leq 9|n_j|$. Note that the degree of $W$ is equal to that of its every column\begin{footnote} {We say $V:\mathbb{R}/2\mathbb{Z}\rightarrow \mathbb{R}^2$ has degree $k$ and denote by $\deg(V)=k$ if $V$ is homotopic to $\left(\begin{array}{c}\cos k\pi x\\ \sin k\pi x\end{array}\right)$. }\end{footnote}. Then we  only consider one of its columns. From  $u_0=1$, one has
\begin{equation*}
||\int_{\mathbb{R}/\mathbb{Z}}e^{-n_j\pi i x}Q_{\lambda}^{-1}(x)S(x)+ie^{-n_j\pi i x}Q_{\lambda}^{-1}(x)T(x)\mathrm{d}x ||=\sqrt{2}.
\end{equation*}
Without loss of generality, we assume
\begin{equation}\label{are30}
||\int_{\mathbb{R}/\mathbb{Z}}e^{-n_j\pi i x}Q_{\lambda}^{-1}(x)S(x)\mathrm{d}x ||\geq \frac{\sqrt{2}}{2}.
\end{equation}
Recalling (\ref{are13}), we have
\begin{equation*}
\overline{A}(x)S(x)=S(x+\alpha)\cos2\pi \widetilde{\theta}\pm T(x+\alpha)\sin2\pi \widetilde{\theta}+O(e^{-\frac{hN}{10}}).
\end{equation*}
Thus from $||2\widetilde{\theta}||_{\mathbb{R}/\mathbb{Z}}=Z^{-1}$, we have for $x\in\mathbb{R}/\mathbb{Z} $
\begin{equation}\label{are31}
\overline{A}(x)S(x)=S(x+\alpha)+O(Z^{-\frac{9}{10}}).
\end{equation}
We claim that for $n>n(\lambda,\alpha)$
\begin{equation}\label{are32}
\inf\limits_{x\in{\mathbb{R}}/\mathbb{Z}}||S(x)||\geq e^{-4hn}.
\end{equation}
Assuming (\ref{are32}) is not true, then there is some $x_0\in\mathbb{R}/\mathbb{Z}$ such that $||S(x_0)||<e^{-4hn}$. Thus by iterating (\ref{are31}) and using (\ref{are9}), we have $\sup\limits_{0\leq j\leq e^{\frac{\epsilon_0n}{11000}}}||S(x_0+j\alpha)||\leq e^{-\frac{2\epsilon_0n}{5}}$. Recalling (\ref{are22}) and by taking $K=4n$,  one has for $\Theta_n=\Gamma_{8n}\left(e^{-\pi n_j ix}\cdot U_\dag^{4n} \right)$
\begin{equation*}
||e^{n_j\pi ix}\Theta_n-U_\dag||_0\leq e^{-10hn}.
\end{equation*}
Then
\begin{equation*}
\sup_{0\leq j\leq e^{\frac{\epsilon_0n}{11000}}}||\Re \Theta_n(x_0+j\alpha)||_0\leq e^{-\frac{\epsilon_0n}{10}}.
\end{equation*}
Note that each coordinate of $\Re \Theta_n$ is a polynomial having essential degree at most $16n$. Similarly to the proof of (\ref{aree}), we have $||S||_{0}\leq e^{-\frac{\epsilon_0n}{100}}$ which is contradicted to (\ref{are30}). Moreover, we have
$$
\sup\limits_{x\in\mathbb{R}/\mathbb{Z}}||S(x)-\Re(e^{n_j\pi ix}\Theta_n(x))||\leq e^{-10hn}.
$$
Combining
$$\det(W_1(x))=\det(W_1(0))+\sum_{0<|k|\leq N}\widehat{\det(W_1)}_k e^{2k \pi ix}+\sum_{|k|> N}\widehat{\det(W_1)}_k e^{2k \pi ix} $$
 with (\ref{are27}) and noting $\det(W_1(x))\in C^{\omega}(\Delta_{h},\mathbb{R})$, we have
 $$\det(W_1(x))=\det(W_1(0))+O(e^{-\frac{hN}{20}}).$$
Thus by the trigonometrical inequality, we obtain
$$\sup_{x\in\mathbb{R}/\mathbb{Z}}||\frac{S(x)}{\sqrt{\det(W_1(x))}}-\frac{\Re(e^{n_j\pi ix}\Theta_n(x))}{\sqrt{\det(W_1(0))}}||\leq e^{-5hn}\leq \inf\limits_{x\in{\mathbb{R}}/\mathbb{Z}}||\frac{S(x)}{\sqrt{\det(W_1(x))}}||.$$
Noting $\deg(W)=\deg\left(\frac{S}{\sqrt{\det(W_1)}}\right)$, we have  $|m_j|\leq 9|n_j|$  by using $\mathrm{Rouch\acute{e}}$'s theorem.
\end{proof}

\section{Reducibility for non-resonant {phases}}
In this section, we will prove that the cocycle $(\alpha,\overline{A}_{\lambda,E})$ is reducible for {non-resonant phases}. Our main result of this section is:

\begin{theorem} \label{rnt1}
Let $0<\beta(\alpha)<\infty,\lambda\in\mathrm{II}$ and $E\in\Sigma_{\lambda,\alpha}$. Supposing there exists non-zero solution $u$ of $H_{\overline{\lambda},\alpha,\theta}u=\frac{E}{\lambda_2}u$ with $|u_k|\leq C_\star e^{-2\pi\eta|k|}$ and $0<\eta\leq \frac{\mathcal{L}_{\overline{\lambda}}}{2\pi}$, then
\begin{enumerate}
 \item[$\mathrm{(i)}$] if $2\theta\notin \alpha\mathbb{Z}+\mathbb{Z}$, there is $B:\mathbb{R}/\mathbb{Z}\rightarrow \mathrm{SL}(2,\mathbb{R})$ being analytic on $\Delta_{\eta}$ such that
 \begin{equation}\label{rne1}
 B^{-1}(x+\alpha)\overline{A}_{\lambda,E}(x)B(x)=R_{\pm\theta}
 \end{equation}
 and
  \begin{equation}\label{rne2}
\rho_{\lambda,\alpha}(E)=\pm\theta+\frac{m}{2}\alpha\ \mod\ \mathbb{Z};
 \end{equation}
  \item[$\mathrm{(ii)}$] if $2\theta\in\alpha\mathbb{Z}+\mathbb{Z}$ and $\eta>8\beta(\alpha)$, there is $B:\mathbb{R}/\mathbb{Z}\rightarrow \mathrm{PSL}(2,\mathbb{R})$ being analytic on $\Delta_{\frac{\eta}{4}}$ such that
 \begin{equation}\label{rne3}
 B^{-1}(x+\alpha)\overline{A}_{\lambda,E}(x)B(x)=\left[\begin{array}{cc}\pm1&a\\
0&\pm1\end{array}\right]
 \end{equation}
      and
\begin{equation}\label{rne4}
2\rho_{\lambda,\alpha}(E)={m}\alpha\ \mod\ \mathbb{Z},
 \end{equation}
where $m=\deg(B)$.
\end{enumerate}
\end{theorem}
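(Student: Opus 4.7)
The plan is to use Aubry duality to turn the exponentially decaying solution $u$ of $H_{\overline{\lambda},\alpha,\theta}u=(E/\lambda_2)u$ into an analytic eigenvector of the cocycle $(\alpha,\overline{A}_{\lambda,E})$. Concretely, following~(\ref{dualre}) set $U(x)=\bigl(e^{2\pi i\theta}u(x),u(x-\alpha)\bigr)^{T}$ with $u(x)=\sum_{k}u_{k}e^{2\pi ikx}$; the hypothesis $|u_{k}|\le C_{\star}e^{-2\pi\eta|k|}$ makes $U$ analytic on $\Delta_{\eta}$, and one has $A_{\lambda,E}(x)U(x)=e^{2\pi i\theta}U(x+\alpha)$. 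Setting $U_{\star}(x)=Q_{\lambda}(x)U(x)$ and using $\overline{A}_{\lambda,E}=Q_{\lambda}(\cdot+\alpha)A_{\lambda,E}Q_{\lambda}^{-1}$, one obtains $\overline{A}_{\lambda,E}(x)U_{\star}(x)=e^{2\pi i\theta}U_{\star}(x+\alpha)$; since $\overline{A}_{\lambda,E}$ is real, the complex conjugate $\overline{U_{\star}}(x):=\overline{U_{\star}(\bar x)}$ is automatically a second solution with eigenvalue $e^{-2\pi i\theta}$.

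For part~(i), I would form $B_{0}(x)=[\,U_{\star}(x),\,\overline{U_{\star}}(x)\,]$, which satisfies $\overline{A}_{\lambda,E}(x)B_{0}(x)=B_{0}(x+\alpha)\,\mathrm{diag}\bigl(e^{2\pi i\theta},e^{-2\pi i\theta}\bigr)$. The crucial step is to prove $\det B_{0}(x)\neq 0$ on $\Delta_{\eta}$: if $U_{\star}(x_{0})=\mu\,\overline{U_{\star}}(x_{0})$ for some $\mu\in\mathbb{C}$, iterating the cocycle yields $U_{\star}(x_{0}+k\alpha)=\mu\,e^{-4\pi ik\theta}\,\overline{U_{\star}}(x_{0}+k\alpha)$ for every $k\in\mathbb{Z}$; expanding both sides as Fourier series in $x$ and using equidistribution of $\{k\alpha\}$ in $\mathbb{R}/\mathbb{Z}$, the non-resonance assumption $2\theta\notin\alpha\mathbb{Z}+\mathbb{Z}$ forces the Fourier modes to cancel in incompatible ways, yielding $U_{\star}\equiv 0$, a contradiction. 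Passing to the real basis $(\Re U_{\star},\pm\Im U_{\star})$ and normalising by $\sqrt{|\det B_{0}|}$ produces $B\in C^{\omega}(\mathbb{R}/\mathbb{Z},\mathrm{SL}(2,\mathbb{R}))$ analytic on $\Delta_{\eta}$ with $B^{-1}(x+\alpha)\overline{A}_{\lambda,E}(x)B(x)=R_{\pm\theta}$, proving~(\ref{rne1}). The identity~(\ref{pr2}) then gives $2\rho_{\lambda,\alpha}(E)=\pm 2\theta+\deg(B)\alpha\pmod{\mathbb{Z}}$, which is~(\ref{rne2}).

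For part~(ii), by hypothesis $2\theta=m\alpha+\ell$ for some $m\in\mathbb{Z},\ell\in\mathbb{Z}$, so after replacing $U_{\star}(x)$ by $e^{-i\pi mx}U_{\star}(x)$ the eigenvalue becomes $\pm 1$ and $\overline{U_{\star}}$ now solves the same cocycle equation. Generically $U_{\star}$ and $\overline{U_{\star}}$ become linearly dependent, so one must manufacture a second column by a generalised-eigenvector procedure: keep $U_{\star}$ as the first column and construct $V(x)$ on a smaller strip so that $\overline{A}_{\lambda,E}(x)V(x)=\pm V(x+\alpha)+a(x)U_{\star}(x+\alpha)$, then normalise. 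The hypothesis $\eta>8\beta(\alpha)$ allows one to control the denominators in this construction by truncating the Fourier expansion of $U_{\star}$ at a scale comparable to $\eta/\beta(\alpha)$ (exactly as in the cutoff argument of Section~4), producing $B\in C^{\omega}(\mathbb{R}/\mathbb{Z},\mathrm{PSL}(2,\mathbb{R}))$ analytic on $\Delta_{\eta/4}$ and conjugating $\overline{A}_{\lambda,E}$ to the parabolic matrix in~(\ref{rne3}). Since a parabolic matrix has trivial rotation number, identity~(\ref{pr2}) yields $2\rho_{\lambda,\alpha}(E)=\deg(B)\alpha\pmod{\mathbb{Z}}$, which is~(\ref{rne4}).

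The main obstacle is the quantitative control of the ``denominators'' in both constructions: in~(i), the strip-wide non-vanishing of $\det B_{0}$, which rests entirely on $2\theta\notin\alpha\mathbb{Z}+\mathbb{Z}$ being used to propagate information from $\mathbb{R}/\mathbb{Z}$ into $\Delta_{\eta}$ via the cocycle equation; and in~(ii), the loss of only a factor~$4$ in the analyticity radius when the two formal solutions collide, which is what the hypothesis $\eta>8\beta(\alpha)$ is designed to guarantee and which mirrors the Liouvillean-loss estimates already established in Section~4.
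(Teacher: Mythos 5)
Your part (i) is essentially the paper's argument. The paper first observes that $\det\bigl(U_{\star},\overline{U_{\star}}\bigr)$ is constant (cocycle invariance plus minimality of $x\mapsto x+\alpha$), splits into the two cases $\det\ne0$ and $\det\equiv 0$, and rules out the degenerate case by showing it forces $2\theta\in\alpha\mathbb{Z}+\mathbb{Z}$; you instead derive that the scalar proportionality function $\mu$ with $\mu(x+\alpha)=e^{-4\pi i\theta}\mu(x)$ must vanish identically. These are the same Fourier computation read in two equivalent ways (no equidistribution is needed --- just matching Fourier modes), and your conclusion is sound once one notes $\det\bigl(U_{\star},\overline{U_{\star}}\bigr)$ is constant on each horizontal line and hence on the whole strip, and that $U_{\star}$ cannot vanish at any point of $\Delta_{\eta}$.

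Part (ii) has a genuine gap. You write as if $2\theta\in\alpha\mathbb{Z}+\mathbb{Z}$ forces $U_{\star}$ and $\overline{U_{\star}}$ to be linearly dependent, and you only treat that branch. This is false: when $2\theta\in\alpha\mathbb{Z}+\mathbb{Z}$ the non-degenerate case $\det\bigl(U_{\star},\overline{U_{\star}}\bigr)\neq 0$ may still occur, and it does not lead directly to a parabolic normal form. The paper handles it separately: one first reduces to $R_{\pm\theta}$ exactly as in part (i), and then, because $2\theta=k\alpha\bmod\mathbb{Z}$, conjugates further by $B_{\star}(x)=B(x)R_{\pm\frac{kx}{2}}$ to reach $\left[\begin{smallmatrix}\pm1&0\\0&\pm1\end{smallmatrix}\right]$, whose degree shifts by $\pm k$. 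You must include this branch. In the degenerate branch you also pass over several structural steps that are not cosmetic. (a) One needs the factorization $U_{\star}(x)=\psi(x)V(x)$ with $|\psi|\equiv1$ on $\mathbb{R}$ and $V:\mathbb{R}/2\mathbb{Z}\to\mathbb{R}^{2}$ real (Lemma 5.4 of \cite{LYJFG}) to extract the real direction --- your ``generalised eigenvector'' $V$ is not constructed but produced by this decomposition. (b) One completes $V$ to a matrix in $C^{\omega}(\mathbb{R}/2\mathbb{Z},\mathrm{SL}(2,\mathbb{R}))$, obtaining an upper-triangular reduced cocycle whose diagonal entry $d(x)=\frac{\psi(x+\alpha)}{\psi(x)}e^{2\pi i\theta}$ is seen to equal $\pm1$ because it is real of modulus one. (c) The homological equation is solved using $\eta>8\beta(\alpha)$ to beat the small divisors $1-e^{\pi ik\alpha}$, as you anticipated; the loss of radius from $\eta$ to $\eta/4$ comes from the passage to period $2$ together with this estimate. (d) Crucially, everything so far lives on $\mathbb{R}/2\mathbb{Z}$, and one needs a final step (Theorem 5.1 of \cite{LYJFG}) to bring the conjugacy back to $C^{\omega}(\mathbb{R}/\mathbb{Z},\mathrm{PSL}(2,\mathbb{R}))$; your sketch does not address this period-halving at all, and without it the resulting $B$ does not live in the claimed class, so the degree count and the rotation-number identity \eqref{pr2} cannot be applied.
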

\begin{proof}
Define $u(x)=\sum\limits_{k\in \mathbb{Z}}u_ke^{2\pi k ix}$, $U(x)=\left(\begin{array}{c}e^{2\pi i\theta}u(x)\\ u(x-\alpha)\end{array}\right)$ and $U_\star(x)=Q_{\lambda}(x)U(x)$. Then we have
\begin{eqnarray}
\label{rne5}&&\overline{A}_{\lambda,E}(x)U_\star(x)=e^{2\pi i\theta}U_\star(x+\alpha).
\end{eqnarray}
Obviously, $U_\star$ is analytic on $\Delta_\eta$, and we denote by $\overline{U_\star(x)}$ the complex conjugate of $U_\star(x)$ for $x\in\mathbb{R}/\mathbb{Z}$. We also let $\overline{U_\star(x)}$ be the analytic extension of $\overline{U_\star(x)}$ to $x\in\Delta_\eta$. Let $B_1(x)=\left(U_\star(x),\overline{U_\star(x)}\right)$. Then  $\det(B_1(x))$ must be constant because of (\ref{rne5}) and the minimality of $x\mapsto x+\alpha$. Thus we have the following two cases.

{\bf Case 1}. $\det(B_1(x))\neq 0$. In this case, we have $\det(B_1(x))=\pm it $ for some $t>0$. We define $B(x)=\frac{1}{\sqrt{2t}}B_1(x)\cdot\left[\begin{array}{cc}1&\pm i\\
1&\mp i\end{array}\right]$.
Then by (\ref{rne5}), one has
\begin{equation}\label{rne6}
B^{-1}(x+\alpha)\overline{A}_{\lambda,E}(x)B(x)=R_{\pm \theta}
\end{equation}
and
\begin{equation}\label{rne7}
\rho_{\lambda,\alpha}(E)=\pm \theta+\frac{m}{2}\alpha\ \mod{\mathbb{Z}},
\end{equation}
where $m=\deg(B)$.
\begin{lemma}[Lemma 5.4 of \cite{LYJFG}]\label{rnl1}
If $\det(B_1(x))\equiv 0$, then $U_\star(x)=\psi(x)V(x)$, where $\psi(x)$ is real analytic on $\Delta_\eta$ with $|\psi(x)|=1$ for all $x\in\mathbb{R}$ and $V(x)$ is analytic on $\Delta_\eta$ with $V(x+1)=\pm V(x)$.
\end{lemma}
\begin{lemma}[Lemma 5.1 of \cite{LYJFG}]\label{rnl2}
If $0<\eta'\leq\eta$ and $\inf\limits_{|\Im x|<\eta'}||Y(x)||\geq \delta>0$, then there is $T(x):\mathbb{R}/2\mathbb{Z}\rightarrow \mathrm{SL}(2,\mathbb{R})$ being analytic on $\Delta_{\eta'}$  such that it has the first column $Y(x)$.
\end{lemma}
\begin{lemma}[Theorem 5.1 of \cite{LYJFG}]\label{rnl3}
Let $0<\eta'\leq\eta$. If $T(x):\mathbb{R}/2\mathbb{Z}\rightarrow \mathrm{SL}(2,\mathbb{R})$ is analytic on $\Delta_{\eta'}$ and $T^{-1}(x+\alpha)\overline{A}_{\lambda,E}(x)T(x)$ is a constant matrix, then there is some $T_1(x):\mathbb{R}/\mathbb{Z}\rightarrow \mathrm{PSL}(2,\mathbb{R})$ being analytic on $\Delta_{\eta'}$ such that $T_1^{-1}(x+\alpha)\overline{A}_{\lambda,E}(x)T_1(x)$ is a constant matrix.
\end{lemma}

{\bf Case 2}. $\det(B_1(x))\equiv 0$. Since (\ref{rne5}) and the minimality of $x\mapsto x+\alpha$, we have $U_\star(x)\neq 0$ for all $x\in\Delta_\eta$. Then by applying Lemma \ref{rnl1}, we have $U_\star(x)=\psi(x)V(x)$ with $\psi(x),V(x)$ being as in Lemma \ref{rnl1}. Obviously, $V(x)\neq 0$ for all $x\in\Delta_{\eta}$. Then  there is some $\delta>0$ such that $\inf\limits_{|\Im x|<\frac{\eta}{2}}||V(x)||\geq \delta$. Let $B_2(x)$ be given by Lemma \ref{rnl2} with $\eta'=\frac{\eta}{2},Y(x)=V(x)$. Then by (\ref{rne5}), we have
\begin{equation*}
B_2^{-1}(x+\alpha)\overline{A}_{\lambda,E}(x)B_2(x)=\left[\begin{array}{cc}d(x)&a(x)\\
0&d^{-1}(x)\end{array}\right],
\end{equation*}
where
\begin{equation}\label{rne8}
d(x)=\frac{\psi(x+\alpha)}{\psi(x)}e^{2\pi i\theta}.
\end{equation}
Note that $|d(x)|=1$ and $d(x)$ is real for $x\in\mathbb{R}$. Then $d(x)=\pm 1$ and
\begin{equation}\label{rne9}
B_2^{-1}(x+\alpha)\overline{A}_{\lambda,E}(x)B_2(x)=\left[\begin{array}{cc}\pm1&a(x)\\
0&\pm 1\end{array}\right].
\end{equation}
Then we  will reduce the right hand side of (\ref{rne9}) to a constant matrix by solving some homological equation. This needs to overcome the difficulty of the small divisors. Let $\eta>8\beta(\alpha)$ and $\widehat{\phi}_k=\mp \frac{\widehat{a}_k}{1-e^{\pi ik\alpha}}\ (k\neq 0)$, where $a(x)=\sum\limits_{k\in \mathbb{Z}}\widehat{a}_ke^{\pi k ix}$. Then on $\Delta_{\frac{\eta}{4}}$, one has
\begin{equation}\label{rne10}
\pm\phi(x+\alpha)\mp\phi(x)=a(x)-\int_{\mathbb{R}/2\mathbb{Z}}a(x)\mathrm{d}x,
\end{equation}
where $\phi(x)=\sum\limits_{k\in \mathbb{Z}}\widehat{\phi}_ke^{\pi k ix}$. By defining $B_3(x)=B_2(x)\left[\begin{array}{cc}1&\phi(x)\\
0&1\end{array}\right]$, it follows from {(\ref{rne9}) }and (\ref{rne10}) that
\begin{equation*}
B_3^{-1}(x+\alpha)\overline{A}_{\lambda,E}(x)B_3(x)=\left[\begin{array}{cc}\pm1&a_1\\
0&\pm 1\end{array}\right],
\end{equation*}
where $a_1=\int_{\mathbb{R}/2\mathbb{Z}}a(x)\mathrm{d}x$. Then by using Lemma {\ref{rnl3}}, there is some $B_4(x):\mathbb{R}/\mathbb{Z}\rightarrow \mathrm{PSL}(2,\mathbb{R})$ being analytic on $\Delta_{\frac{\eta}{4}}$ such that $B_4^{-1}(x+\alpha)\overline{A}(x)B_4(x)=D$, where $D$ is a constant matrix. We can reduce $D$ to $\left[\begin{array}{cc}\pm1&a_2\\
0&\pm1\end{array}\right]$, or to $\left[\begin{array}{cc}v&0\\
0&v^{-1}\end{array}\right]$ with $v\neq \pm1 $ ($v\in\mathbb{R}$), or to $R_{\pm\theta'}$ with $\theta'\in\mathbb{R}$, by some invertible matrix $J$.
From $E\in\Sigma_{\lambda,\alpha}$, then $\overline{A}_{\lambda,E}(x)$ can not be uniformly hyperbolic. Thus  $J^{-1}DJ\neq \left[\begin{array}{cc}v&0\\
0&v^{-1}\end{array}\right]$. If $J^{-1}DJ=R_{\pm\theta'}$, then $2\theta'=m'\alpha\ \mod\mathbb{Z}$. Thus by defining $J(x)=JR_{\pm\frac{m'x}{2}}$, we have $$J^{-1}(x+\alpha)DJ(x)=\left[\begin{array}{cc}\pm1&0\\
0&\pm1\end{array}\right].$$ We have proved that there is some $B(x):\mathbb{R}/\mathbb{Z}\rightarrow \mathrm{PSL}(2,\mathbb{R})$ being analytic on $\Delta_{\frac{\eta}{4}}$ such that $B^{-1}(x+\alpha)\overline{A}_{\lambda,E}(x)B(x)=\left[\begin{array}{cc}\pm1&a\\
0&\pm1\end{array}\right]$, where $a\in\mathbb{R}$ is a constant.

If $2\theta\notin \alpha\mathbb{Z}+\mathbb{Z}$, then we can not be in {\bf Case 2}. In fact, from (\ref{rne8}) and using the Fourier series, we have $\psi(x)=e^{-\pi i k x}$ for some $k\in\mathbb{Z}$ and $e^{2\pi i\theta}=\pm e^{-\pi i k \alpha}$, which is impossible since $2\theta\notin \alpha\mathbb{Z}+\mathbb{Z}$. Thus we must be in {\bf Case 1}. Then  (\ref{rne1}) and (\ref{rne2}) follow.

Suppose  $2\theta=k\alpha\ \mod\mathbb{Z}$. If we are in {\bf Case 1}, we take $B_\star(x)=B(x)R_{\pm\frac{kx}{2}}$ with $B(x)$ being given by {\bf Case 1}. Then from (\ref{rne6}), we have $B_\star^{-1}(x+\alpha)\overline{A}_{\lambda,E}(x)B_\star(x)=\left[\begin{array}{cc}\pm1&0\\
0&\pm1\end{array}\right]$.  Thus (\ref{rne4}) follows. If we are in {\bf Case 2}, the result follows immediately.
\end{proof}

\section{Proof of the main theorem }
In this section, we will prove that the lengths of the spectral gaps decay exponentially. The proofs are similar to that of \cite{LS}. For reader's convenience, we include the details below. {From now on,
we focus on a  specific  gap $G_m=(E_m^-,E_m^+)$ or $G_m=\{E_m^-\}$ with $m\in\mathbb{Z}\setminus\{0\}$}.

\subsection{Quantitative reducibility at the boundary of a spectral gap}
We  let
\begin{equation*}
\eta=\frac{\mathcal{L}_{\overline{\lambda}}}{4000\pi}=\frac{h}{20}
\end{equation*}
and assume $C'>0$ is a large absolute constant which is larger than any absolute constant $C>0$ appearing in the following.

\begin{lemma}\label{rnt2}
Suppose $0<\beta(\alpha)<\infty$, $\lambda\in\mathrm{II} \ with\  \mathcal{L}_{\overline{\lambda}}>4000\pi C'\beta(\alpha)$ and $E\in\Sigma_{\lambda,\alpha}$. If $2\rho_{\lambda,\alpha}(E)\in\alpha\mathbb{Z}+\mathbb{Z}$ and  $\theta=\theta(E)$ is given by Lemma \ref{arl1},  then $2\theta\in\alpha\mathbb{Z}+\mathbb{Z}$. Moreover,
\begin{equation}\label{rnei}
|u_k|\leq e^{-2\pi \eta|k|},\ for \ |k|\geq 3|\widetilde{n}|,
\end{equation}
where $u=\{u_k\}$ is given by Lemma \ref{arl1} and  $2\theta=\widetilde{n}\alpha\ \mod\mathbb{Z}$.
\end{lemma}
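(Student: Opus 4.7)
The plan is to establish the two conclusions successively: first that $\theta$ must be $\epsilon_0$-non-resonant (so that $2\theta\in\alpha\mathbb{Z}+\mathbb{Z}$), and then the exponential decay of $u$.

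\textbf{Showing $\theta$ is $\epsilon_0$-non-resonant.} I will argue by contradiction. Suppose $\theta$ has infinitely many $\epsilon_0$-resonances $\{n_j\}$. For each sufficiently large $|n_j|$, Theorem~\ref{art} produces an integer $m_j$ with $|m_j|\le 9|n_j|$ satisfying
\[
\|2\rho_{\lambda,\alpha}(E)-m_j\alpha\pm(2\theta-n_j\alpha)\|_{\mathbb{R}/\mathbb{Z}}\le e^{-h|n_{j+1}|/30}.
\]
Inserting $2\rho_{\lambda,\alpha}(E)\equiv m\alpha\mod\mathbb{Z}$ and using $\|2\theta-n_j\alpha\|_{\mathbb{R}/\mathbb{Z}}\le e^{-\epsilon_0|n_j|}$ together with (\ref{aale3}) (so that $h|n_{j+1}|/30\gg\epsilon_0|n_j|$) yields
\[
\|(m-m_j)\alpha\|_{\mathbb{R}/\mathbb{Z}}\le 2e^{-\epsilon_0|n_j|}.
\]
Since $m$ is fixed and $|m-m_j|\le 10|n_j|$ for $|n_j|$ large, if $m_j\ne m$ then Lemma~\ref{aall1}(i) gives the opposing bound $\|(m-m_j)\alpha\|_{\mathbb{R}/\mathbb{Z}}\ge C(\alpha)e^{-11\beta(\alpha)|n_j|}$, which conflicts with the previous inequality because $\epsilon_0\gg\beta(\alpha)$ under the hypothesis on $\mathcal{L}_{\overline{\lambda}}$. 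Hence $m_j=m$ eventually, and reinserting yields $\|2\theta-n_j\alpha\|_{\mathbb{R}/\mathbb{Z}}\le e^{-h|n_{j+1}|/30}$, contradicting the lower bound (\ref{nN}), $\|2\theta-n_j\alpha\|_{\mathbb{R}/\mathbb{Z}}\ge e^{-2.5\beta(\alpha)|n_{j+1}|}$, once $h/30>2.5\beta(\alpha)$; this is guaranteed by $\mathcal{L}_{\overline{\lambda}}>4000\pi C'\beta(\alpha)$ with $C'$ sufficiently large (recall $h=\mathcal{L}_{\overline{\lambda}}/(200\pi)$).

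\textbf{Deducing $2\theta\in\alpha\mathbb{Z}+\mathbb{Z}$.} Now $\theta$ has only finitely many $\epsilon_0$-resonances. Theorem~\ref{aalt} then yields $|u_k|\le C_\star e^{-\mathcal{L}_{\overline{\lambda}}|k|/100}$ for $|k|$ beyond the last resonance, so $u$ decays exponentially at a rate larger than $2\pi h$. Theorem~\ref{rnt1} thus applies with $\eta=h$. If one had $2\theta\notin\alpha\mathbb{Z}+\mathbb{Z}$, part~(i) of that theorem would give $\rho_{\lambda,\alpha}(E)\equiv\pm\theta+\tfrac{m'}{2}\alpha\mod\mathbb{Z}$ for some $m'\in\mathbb{Z}$, and combining with $2\rho_{\lambda,\alpha}(E)\equiv m\alpha\mod\mathbb{Z}$ would force $\pm 2\theta\in\alpha\mathbb{Z}+\mathbb{Z}$, a contradiction. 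So $2\theta\equiv\widetilde{n}\alpha\mod\mathbb{Z}$ for some $\widetilde{n}\in\mathbb{Z}$.

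\textbf{The decay bound.} With $2\theta\equiv\widetilde{n}\alpha\mod\mathbb{Z}$, the minimum $\min_{|k|\le|n|}\|2\theta-k\alpha\|_{\mathbb{R}/\mathbb{Z}}$ vanishes for every $|n|\ge|\widetilde{n}|$ (achieved uniquely at $k=\widetilde{n}$, since $\alpha$ is irrational), so no integer $n$ with $|n|>|\widetilde{n}|$ can be an $\epsilon_0$-resonance. Thus $\widetilde{n}$ is the largest $\epsilon_0$-resonance. Applying Theorem~\ref{aalt} with $C_0=3$ gives $|u_k|\le C_\star e^{-\mathcal{L}_{\overline{\lambda}}|k|/100}$ for $|k|\ge 3|\widetilde{n}|$, and since $2\pi\eta=\mathcal{L}_{\overline{\lambda}}/2000$ is a factor of $20$ smaller than $\mathcal{L}_{\overline{\lambda}}/100$, the constant $C_\star$ is absorbed into the exponent to give $|u_k|\le e^{-2\pi\eta|k|}$.

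\textbf{Main obstacle.} The delicate point is the quantitative comparison in the first step: three distinct exponential scales — $\epsilon_0|n_j|$, $\beta(\alpha)|n_j|$, and $h|n_{j+1}|$ — are all proportional to $\mathcal{L}_{\overline{\lambda}}$ with different constants, and the hypothesis $\mathcal{L}_{\overline{\lambda}}>4000\pi C'\beta(\alpha)$ must be tight enough to give simultaneously $\epsilon_0>11\beta(\alpha)$ (to kill the $m_j\ne m$ alternative) and $h/30>2.5\beta(\alpha)$ (to contradict (\ref{nN})). A secondary subtlety is absorbing $C_\star$ in the last step, which is mild given the factor-of-$20$ gap in exponents together with the convention that the bound is required only for $|n_j|,|\widetilde{n}|>n(\lambda,\alpha)$.
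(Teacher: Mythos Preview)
Your proof is correct and follows essentially the same three-step architecture as the paper: (1) show $\theta$ is $\epsilon_0$-non-resonant by contradiction using Theorem~\ref{art}, (2) invoke Theorem~\ref{rnt1} to force $2\theta\in\alpha\mathbb{Z}+\mathbb{Z}$, (3) read off the decay from Theorem~\ref{aalt}. The only noticeable difference is in the execution of step~(1): the paper argues that the integers $m_j$ produced by Theorem~\ref{art} are $\tfrac{\epsilon_0}{10}$-resonances of $\rho_{\lambda,\alpha}(E)$, and then observes that $2\rho_{\lambda,\alpha}(E)\in\alpha\mathbb{Z}+\mathbb{Z}$ forbids infinitely many such resonances, reaching a contradiction via the squeeze between (\ref{rne11}) and (\ref{rne12}). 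You instead fix the specific $m$ with $2\rho_{\lambda,\alpha}(E)\equiv m\alpha$, use the Diophantine bound (\ref{aale1}) to force $m_j=m$ eventually, and then contradict (\ref{nN}) directly. Both routes use the same ingredients (Theorem~\ref{art}, Lemma~\ref{aall1}, and the resonance lower bound (\ref{nN})); yours is marginally more direct, while the paper's formulation makes the role of the rotation number's own resonance structure more explicit.
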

\begin{proof}
 We first claim that $\theta$ is $\epsilon_0$-non-resonant with $\epsilon_0=100C_1\beta(\alpha)$. Denote by $\{n_j\}$  the set of all $\epsilon_0$-resonances of $\theta$. In fact, if $\theta$ is $\epsilon_0$-resonant, then the set $\{n_j\}$ is infinite. Recalling Theorem \ref{art}, there exists some $m_j\in\mathbb{Z}$ such that $|m_j|\leq 9|n_j|$ and $||2\rho_{\lambda,\alpha}(E)-{m_j\alpha}\pm(2\theta-n_j\alpha)||_{\mathbb{R}/\mathbb{Z}}<e^{-\frac{h}{30}|n_{j+1}|}$. Thus from (\ref{aale3}), one has
\begin{equation}\label{rne11}
||2\rho_{\lambda,\alpha}(E)-m_j\alpha||_{\mathbb{R}/\mathbb{Z}}\geq ||2\theta-n_j\alpha||_{\mathbb{R}/\mathbb{Z}}-e^{-\frac{h}{30}|n_{j+1}|}>0
\end{equation}
and
\begin{eqnarray}
\label{rne12}||2\rho_{\lambda,\alpha}(E)-m_j\alpha||_{\mathbb{R}/\mathbb{Z}}
&\leq& ||2\theta-n_j\alpha||_{\mathbb{R}/\mathbb{Z}}+e^{-\frac{h}{30}|n_{j+1}|}\\
\label{rne13}&\leq& e^{-\frac{1}{10}\epsilon_0|m_j|}.
\end{eqnarray}
Combining (ii) of Lemma \ref{aall1} with (\ref{rne13}), we know $m_j$ is an $\frac{\epsilon_0}{10}$-resonance of $\rho_{\lambda,\alpha}(E)$. If the set of all $\frac{\epsilon_0}{10}$-resonances of $\rho_{\lambda,\alpha}(E)$ is finite, then  $\inf\limits_{j\in\mathbb{N}}||2\rho_{\lambda,\alpha}(E)-m_j\alpha||_{\mathbb{R}/\mathbb{Z}}>0$  by (\ref{rne11}). This is contradicted to
(\ref{rne12}). Hence $\rho_{\lambda,\alpha}(E)$ is $\frac{\epsilon_0}{10}$-resonant, which is impossible for $2\rho_{\lambda,\alpha}(E)\in\alpha\mathbb{Z}+\mathbb{Z}$. We finish the proof of the claim.

From the claim above, the equation $H_{\overline{\lambda},\alpha,\theta}u=\frac{E}{\lambda_2}u$ admits a non-zero solution $u$ with
$|u_k|\leq C_\star e^{-2\pi \eta|k|}$.  From Theorem \ref{rnt1}, we have $2\theta\in\alpha\mathbb{Z}+\mathbb{Z}$.  In addition, (\ref{rnei})  follows from Theorem \ref{aalt} (since for some $j>0$, $|n_j|=|\widetilde{n}|$ and $|n_{j+1}|=\infty$ ).
\end{proof}

In the following, we always assume the conditions in Lemma \ref{rnt2} are satisfied  so that
\begin{equation*}%\label{Bloe2}
n=|\widetilde{n}|<\infty.
\end{equation*}

Our main theorem in this subsection is:
\begin{theorem}\label{p1}
Suppose $0<\beta(\alpha)<\infty$, $\lambda\in\mathrm{II} \ with\  \mathcal{L}_{\overline{\lambda}}>4000\pi C'\beta(\alpha)$. Let $E\in\Sigma_{\lambda,\alpha}$  be a boundary of the spectral gap $G_m$ {with $m\in\mathbb{Z}\setminus\{0\}$}.  Then there exists some $B(x)\in C^{\omega}(\mathbb{R}/\mathbb{Z},{\rm PSL}(2,\mathbb{R}))$ being analytic on $\Delta_{20\beta(\alpha)}$ such that
\begin{equation}\label{Red}
B^{-1}(x+\alpha)\overline{A}_{\lambda,E}(x)B(x) =\left[\begin{array}{cc}\pm1&a_m\\
0&\pm1\end{array}\right],
\end{equation}
where
\begin{equation}\label{p1e1}
|a_m|\leq C_{\star}e^{-\frac{\eta}{2} n}
\end{equation}
and
\begin{equation}\label{p1e2}
||B||_{20\beta(\alpha)}\leq  C_{\star}e^{C\beta(\alpha)n}.
\end{equation}
Moreover,
\begin{equation}\label{p1en3}
|m|\leq  Cn,
\end{equation}
where $C>0$ is some absolute constant.
\end{theorem}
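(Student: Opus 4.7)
Since $E$ lies on the boundary of the spectral gap $G_m$ with $m\neq 0$, the gap labelling gives $2\rho_{\lambda,\alpha}(E)=m\alpha\mod\mathbb{Z}$, so $2\rho_{\lambda,\alpha}(E)\in\alpha\mathbb{Z}+\mathbb{Z}$. The plan is to invoke Lemma \ref{rnt2} to locate a generalized eigenvalue $\theta=\theta(E)$ with $2\theta=\widetilde{n}\alpha\mod\mathbb{Z}$ and $|\widetilde{n}|=n<\infty$, together with a solution $u$ of $H_{\overline{\lambda},\alpha,\theta}u=\tfrac{E}{\lambda_2}u$ satisfying $|u_k|\leq 1$ for all $k$ and $|u_k|\leq e^{-2\pi\eta|k|}$ for $|k|\geq 3n$. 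Because $\mathcal{L}_{\overline{\lambda}}>4000\pi C'\beta(\alpha)$, the decay exponent satisfies $\eta>C'\beta(\alpha)\gg 8\beta(\alpha)$, so we may apply Theorem \ref{rnt1}(ii) to obtain a conjugation $B\in C^{\omega}(\mathbb{R}/\mathbb{Z},\text{PSL}(2,\mathbb{R}))$ analytic on $\Delta_{\eta/4}\supset\Delta_{20\beta(\alpha)}$ bringing $(\alpha,\overline{A}_{\lambda,E})$ to the parabolic normal form in \eqref{Red}.

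Next I would make the construction quantitative by working with the explicit objects from the proof of Theorem \ref{rnt1}(ii). Setting $u(x)=\sum_{k\in\mathbb{Z}}u_ke^{2\pi kix}$, $U(x)=(e^{2\pi i\theta}u(x),u(x-\alpha))^T$ and $U_\star(x)=Q_\lambda(x)U(x)$, I split the Fourier sum into $|k|<3n$ (bounded by $1$) and $|k|\geq 3n$ (exponentially decaying) to get
\[
\|U_\star\|_{20\beta(\alpha)}\leq C_{\star}\Bigl(n e^{120\pi\beta(\alpha)n}+\sum_{|k|\geq 3n}e^{-2\pi(\eta-20\beta(\alpha))|k|}\Bigr)\leq C_{\star}e^{C\beta(\alpha)n},
\]
which, combined with an analogous lower bound $\inf_{\Delta_{20\beta(\alpha)}}\|U_\star\|\geq e^{-C\beta(\alpha)n}$ obtained by the iteration--plus--Lemma \ref{arl2} trick used for \eqref{are8}, will give \eqref{p1e2}. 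In the nondegenerate branch $\det B_1\not\equiv 0$ of the proof of Theorem \ref{rnt1}, the matrix $B_{\star}(x)=B(x)R_{\pm\widetilde{n}x/2}$ conjugates $R_{\pm\theta}$ to $\pm I$ (since $2\theta=\widetilde{n}\alpha$), so $a_m=0$; in the degenerate branch $\det B_1\equiv 0$ the factorization $U_\star=\psi V$ and the homological equation \eqref{rne10} produce the constant matrix on the right-hand side of \eqref{Red}.

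For the smallness bound \eqref{p1e1}, the key point is to quantify the residual average constant $a_m$ that appears after solving \eqref{rne10}. The off-diagonal function $a(x)$ in \eqref{rne9} is built from the error of the intertwining relation beyond the truncation scale of $u$, so its Fourier coefficients inherit the decay $|\widehat a_k|\lesssim e^{-2\pi\eta|k|}$ up to tails of size $\sim e^{-\pi\eta n}$ coming from the $|k|<3n$ block; the small divisors $|1-e^{\pi ik\alpha}|^{-1}\leq C e^{C\beta(\alpha)|k|}$ only cost $e^{C\beta(\alpha)n}\ll e^{\eta n/2}$, so $|a_m|\leq C_{\star}e^{-\eta n/2}$ after averaging. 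I expect this to be the most delicate step, because one has to separate the non-decaying core of $u$ (Fourier modes $|k|<3n$) from the exponentially small tail and then show that only the tail contributes to $a_m$ after the normal-form reduction.

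Finally, for the degree bound \eqref{p1en3}, I would mimic the Rouché-type argument at the end of the proof of Theorem \ref{art}. Namely, the columns of $B$ (or of $W_1=[\,\Re U_\dag,\pm\Im U_\dag\,]$ in the notation of that proof, with $U_\dag(x)=e^{\pi\widetilde{n}ix}U_\star(x)$) are dominated on $\mathbb{R}/\mathbb{Z}$ by a trigonometric polynomial supported in Fourier modes of absolute value $\lesssim n$; since the tail correction is exponentially small compared to the uniform lower bound on the column, Rouché's theorem forces the winding number to be controlled by the degree of the approximating polynomial, yielding $|m|=|\deg B|\leq Cn$. The compatibility $2\rho_{\lambda,\alpha}(E)=m\alpha\mod\mathbb{Z}$ from Theorem \ref{rnt1}(ii) then identifies this $m$ with the gap index, closing the argument.
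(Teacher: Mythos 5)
Your outline for the existence of the parabolic normal form, the upper bound \eqref{p1e2}, and the degree bound \eqref{p1en3} matches the paper's strategy: pass to $U_\dag(x)=e^{i\pi\widetilde n x}U_\star(x)$, choose a real branch $V_\dag=\Re U_\dag$ or $\Im U_\dag$ with an $e^{-C\beta(\alpha)n}$ lower bound obtained by an iteration plus polynomial-interpolation argument, build $B_1=[V_\dag,\ TV_\dag/\|V_\dag\|^2]$, and solve a homological equation. Those steps are sound.

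However, your argument for the smallness bound \eqref{p1e1} on $a_m$ has a genuine gap. You claim that the off-diagonal function $\nu$ (your $a(x)$) is ``built from the error of the intertwining relation beyond the truncation scale of $u$'' so that its Fourier coefficients decay like $e^{-2\pi\eta|k|}$, and that the average is therefore $O(e^{-\eta n/2})$. That is not correct. The relation $\overline A(x)V_\dag(x)=\pm V_\dag(x+\alpha)$ is \emph{exact}, not a truncation, and the off-diagonal entry $\nu$ comes entirely from the second column $TV_\dag/\|V_\dag\|^2$ of $B_1$, which satisfies no useful intertwining identity. Nothing in the construction makes the zero Fourier mode $[\nu]=a_m$ small: the homological equation removes all $k\neq 0$ modes and leaves the $k=0$ mode untouched, and the small-divisor bound you cite plays no role at $k=0$. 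Your heuristic would in fact ``prove'' $a_m$ small for every energy, not just at a gap boundary, which is clearly too strong.

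The paper closes this gap with a different idea, which you are missing: a growth-rate obstruction. After reducing to the constant parabolic cocycle $P=\left[\begin{smallmatrix}\pm1 & a_m\\0 & \pm1\end{smallmatrix}\right]$, iterating gives $B^{-1}(x+l\alpha)\overline A_l(x)B(x)=\left[\begin{smallmatrix}\pm1 & la_m\\0 & \pm1\end{smallmatrix}\right]$, so $l\,|a_m|\leq \|B^{-1}\|_{0}\|\overline A_l\|_{0}\|B\|_{0}$. One then needs a subexponential bound on the transfer matrices at this scale — this is exactly Lemma \ref{te}, $\sup_{0\le k\le e^{\eta n}}\|\overline A_k\|_\eta\le C_\star e^{C\beta(\alpha)n}$, proved via the $U_\star^{I_2}$-based near-diagonalization. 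Taking $l=l_0=\lfloor e^{3\eta n/4}\rfloor$ and combining with \eqref{p1e2} forces $|a_m|\le C_\star e^{C\beta(\alpha)n}/l_0\le C_\star e^{-\eta n/2}$. Without this mechanism — or some equivalent dynamical input — you cannot obtain \eqref{p1e1}, so you should replace the Fourier-decay heuristic by the iteration-plus-growth-bound argument (and you will need to prove or quote Lemma \ref{te}).
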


We define $U_\star(x)=Q_{\lambda}(x)U(x)$ with $U(x)=\left(\begin{array}{cc}e^{2\pi i\theta}\sum\limits_{k\in\mathbb{Z}}u_ke^{2\pi kix}\\
\sum\limits_{k\in\mathbb{Z}}u_ke^{2\pi ki(x-\alpha)}\end{array}\right)$, where $\theta=\theta(E)$ and $\{u_k\}$ are  given by Lemma \ref{rnt2}. Let
\begin{equation}\label{new1}
U_\dag(x)=e^{i\pi \widetilde{n}x}U_\star(x).
\end{equation}
%Since Lemma \ref{Blol2}, $\mathcal{U}(x)$ must have an analytic extension  on the strip $\Delta_{\eta}$.
\begin{lemma}
Let $U_\dag(x)$ be given by (\ref{new1}). Then  $U_\dag(x)$ is  well defined on  $\mathbb{R}/2\mathbb{Z}$ and
is analytical on $ \Delta_{40\beta(\alpha)}$. Moreover,
%\begin{equation}\label{Blo1}
%\inf_{|\Im{x}|<c\eta_2}||\mathcal{U}(x)||\geq c_*e^{-C\beta(\alpha)n} ,
%\end{equation}
%and
\begin{equation}\label{Blo2}
||U_\dag||_{40\beta(\alpha)}\leq C_{\star}e^{ C\beta(\alpha) n}.
\end{equation}
\end{lemma}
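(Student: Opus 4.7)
The plan is to verify the three claims (2-periodicity, analyticity on $\Delta_{40\beta(\alpha)}$, and the norm bound) in sequence, each reducing to a short calculation once the right ingredients are assembled.

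First I would address well-definedness on $\mathbb{R}/2\mathbb{Z}$. The factor $Q_\lambda(x)$ and the two coordinates $e^{2\pi i\theta}\sum_k u_k e^{2\pi kix}$, $\sum_k u_k e^{2\pi ki(x-\alpha)}$ are all $1$-periodic, so $U_\star$ descends to $\mathbb{R}/\mathbb{Z}\subset\mathbb{R}/2\mathbb{Z}$. Since $\widetilde{n}\in\mathbb{Z}$, the prefactor $e^{i\pi\widetilde{n}x}$ satisfies $e^{i\pi\widetilde{n}(x+2)}=e^{i\pi\widetilde{n}x}$, and is in general not $1$-periodic (when $\widetilde{n}$ is odd). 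Hence $U_\dag$ is well defined on $\mathbb{R}/2\mathbb{Z}$ but not on $\mathbb{R}/\mathbb{Z}$.

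Next I would establish analyticity and the norm bound simultaneously. The map $Q_\lambda$ is analytic on $|\Im z|\leq \mathcal{L}_{\overline\lambda}/(4\pi)$ with $\|Q_\lambda\|_{\mathcal{L}_{\overline\lambda}/(4\pi)}\leq C_\star$, and since $\mathcal{L}_{\overline\lambda}>4000\pi C'\beta(\alpha)$ we have $40\beta(\alpha)\ll \mathcal{L}_{\overline\lambda}/(4\pi)$, so $\|Q_\lambda\|_{40\beta(\alpha)}\leq C_\star$. It then suffices to analyze $U(x)$. I would split the two Fourier series according to whether $|k|<3n$ or $|k|\geq 3n$: on the first range Lemma \ref{arl1} gives $|u_k|\leq 1$, while on the second range Lemma \ref{rnt2} gives $|u_k|\leq e^{-2\pi\eta|k|}$ with $\eta=\mathcal{L}_{\overline\lambda}/(4000\pi)\geq C'\beta(\alpha)$. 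For $z\in\Delta_{40\beta(\alpha)}$ one has $|e^{2\pi kiz}|\leq e^{80\pi\beta(\alpha)|k|}$, so the low-frequency part contributes
\begin{equation*}
\sum_{|k|<3n}|u_k|e^{80\pi\beta(\alpha)|k|}\leq C_\star e^{240\pi\beta(\alpha)n},
\end{equation*}
and the high-frequency part is a convergent geometric tail of size $O(1)$ (since $2\pi\eta\gg 80\pi\beta(\alpha)$ for $C'$ large). Hence $\|U\|_{40\beta(\alpha)}\leq C_\star e^{C\beta(\alpha)n}$, which also proves analyticity in $\Delta_{40\beta(\alpha)}$.

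Finally, the prefactor contributes $|e^{i\pi\widetilde{n}z}|\leq e^{\pi|\widetilde{n}|\cdot 40\beta(\alpha)}=e^{40\pi\beta(\alpha)n}$ on $\Delta_{40\beta(\alpha)}$. Multiplying the three estimates yields
\begin{equation*}
\|U_\dag\|_{40\beta(\alpha)}\leq e^{40\pi\beta(\alpha)n}\cdot\|Q_\lambda\|_{40\beta(\alpha)}\cdot\|U\|_{40\beta(\alpha)}\leq C_\star e^{C\beta(\alpha)n},
\end{equation*}
as required. The only point requiring care is the interplay between the sizes: one must verify $40\beta(\alpha)<\eta$ so that the tail $|k|\geq 3n$ converges absolutely even after multiplication by $e^{80\pi\beta(\alpha)|k|}$; this is exactly where the standing assumption $\mathcal{L}_{\overline\lambda}>4000\pi C'\beta(\alpha)$ (with $C'$ large enough) is used. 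There is no deeper obstruction: the lemma is essentially a bookkeeping consequence of the almost-localization decay provided by Theorem \ref{aalt} together with the analyticity of $Q_\lambda$.
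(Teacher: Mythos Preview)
Your proof is correct and follows exactly the approach the paper indicates: the paper's one-line proof (``This follows from \eqref{rnei} and the fact that $|u_k|\leq1$'') is precisely your split of the Fourier series into $|k|<3n$ (bounded via $|u_k|\leq1$) and $|k|\geq 3n$ (bounded via the exponential decay from Lemma~\ref{rnt2}). You have simply spelled out the details that the paper leaves implicit, including the $2$-periodicity, the analyticity of $Q_\lambda$ on the relevant strip, and the contribution of the prefactor $e^{i\pi\widetilde{n}x}$.
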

\begin{proof}
This follows from
 \eqref{rnei} and  the fact that  $|u_k|\leq1$.
\end{proof}

\begin{remark}
Actually, $U_\dag(x)$  is analytic
 on $ \Delta_{\eta}$. However, $40\beta(\alpha)$ is enough  for our goal.
\end{remark}

For simplicity, we write $\overline{A}(x)=\overline{A}_{\lambda,E}(x)$ in the following.

By Aubry duality and (\ref{new1}), we have
\begin{equation}\label{se1}
\overline{A}(x)U_\dag(x)=\pm U_\dag(x+\alpha).\\
\end{equation}
For $x\in\mathbb{R}/\mathbb{Z}$, we split   $U_\dag(x)$ into
$$U_\dag(x)=\Re{U_\dag}(x)+i\Im{U_\dag}(x)\in \mathbb{R}^2+i\mathbb{R}^2.$$
It follows from (\ref{se1}) that for $x\in\mathbb{R}/\mathbb{Z}$
\begin{eqnarray}
\label{se2}&&\overline{A}(x)\Re{U_\dag}(x)=\pm{\Re{U_\dag}(x+\alpha)};\\
\label{se3}&&\overline{A}(x)\Im{U_\dag}(x)=\pm{\Im{U_\dag}(x+\alpha)}.
\end{eqnarray}
Note that $\Re{U_\dag}(x) $, $\Im{U_\dag}(x)$  are  well defined on $\mathbb{R}/2\mathbb{Z}$ and  can be
analytically extended to $\Delta_{40 \beta(\alpha)}$.

\begin{lemma}\label{use}
We can choose $ V_\dag=\Re{U_\dag}$  or  $ V_\dag=\Im{U_\dag}$  such that
 $ V_\dag$ is real analytic on $\Delta_{40 \beta(\alpha)}$ and
 %\begin{equation}\label{se5}
%\inf_{x\in\mathbb{R}/\mathbb{Z}}||\mathcal{V}(x)||\geq c_*e^{-C\beta(\alpha)n},
 %\end{equation}
  \begin{equation}\label{de}
\inf_{|\Im{x}|\leq 40\beta(\alpha)}|| V_\dag(x)||\geq c_{\star}e^{-C\beta(\alpha) n}.
 \end{equation}
 %and
 %\begin{equation}\label{de1}
%\inf_{|\Im{x}|\leq \eta}||\mathcal{V}(x)||\geq c_*e^{-C\eta n}.
% \end{equation}
 \end{lemma}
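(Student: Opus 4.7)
The plan is to choose $V_\dag\in\{\Re U_\dag,\Im U_\dag\}$ to be whichever of the two carries at least half the $L^2$-mass of $U_\dag$ on $\mathbb{R}/2\mathbb{Z}$. The normalization $u_0=1$ together with the uniform invertibility of $Q_\lambda$ on the strip $|\Im x|\le \mathcal{L}_{\overline{\lambda}}/(4\pi)$ yields $\|U_\dag\|_{L^2(\mathbb{R}/2\mathbb{Z})}\ge c_\star$, and hence $\|V_\dag\|_{L^2(\mathbb{R}/2\mathbb{Z})}\ge c_\star$. The identities (\ref{se2}) and (\ref{se3}) initially hold only on $\mathbb{R}/\mathbb{Z}$, but extend by uniqueness of analytic continuation to the complex cocycle identity $\overline{A}_{\lambda,E}(z)V_\dag(z)=\pm V_\dag(z+\alpha)$ on the whole strip $\Delta_{40\beta(\alpha)}$.

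Then (\ref{de}) is established by contradiction: suppose $\|V_\dag(z_0)\|=\delta$ for some $z_0=x_0+is\in\Delta_{40\beta(\alpha)}$. Iterating the extended cocycle identity and invoking (\ref{are7}) with $\delta'=\beta(\alpha)$ on $\Delta_{40\beta(\alpha)}\subset\Delta_{\mathcal{L}_{\overline{\lambda}}/(2\pi)}$ (which is valid since $\mathcal{L}_{\overline{\lambda}}>4000\pi C'\beta(\alpha)$) yields $\|V_\dag(z_0+k\alpha)\|\le C_\star e^{\beta(\alpha)|k|}\delta$ for $|k|\le M$, where I take $M\asymp n$. Truncating the Fourier expansion of $U_\dag$ at mode $M$ produces $\tilde U_\dag$ with $\|U_\dag-\tilde U_\dag\|_{40\beta(\alpha)}\le e^{-cn}$, using $|u_k|\le e^{-2\pi\eta|k|}$ for $|k|\ge 3n$ together with $\eta\gg\beta(\alpha)$. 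Applying an $\mathbb{R}/2\mathbb{Z}$-adaptation of Lemma \ref{arl2} coordinate-wise to $\tilde V_\dag(\cdot+is)$, a trigonometric polynomial of essential degree $\asymp M$, and using the sample-point bound above, I obtain
$$\sup_{x\in\mathbb{R}/2\mathbb{Z}}\|V_\dag(x+is)\|\le C_\star e^{C\beta(\alpha)n}(\delta+e^{-cn}).$$

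On the other hand, writing $V_\dag(\cdot+is)$ as a Fourier series on $\mathbb{R}/2\mathbb{Z}$ and applying Parseval,
$$\|V_\dag(\cdot+is)\|_{L^2(\mathbb{R}/2\mathbb{Z})}^2\ge e^{-2\pi M|s|}\bigl(\|V_\dag\|_{L^2(\mathbb{R}/2\mathbb{Z})}^2-e^{-cn}\bigr)\ge c_\star e^{-C\beta(\alpha)n},$$
since $|s|\le 40\beta(\alpha)$ and the Fourier tail beyond mode $M$ is $O(e^{-cn})$. Because the uniform norm dominates the normalized $L^2$-norm, these two bounds together force $\delta\ge c_\star e^{-C''\beta(\alpha)n}$. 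As $z_0\in\Delta_{40\beta(\alpha)}$ was arbitrary, (\ref{de}) follows.

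The main obstacle will be the careful adaptation of Lemma \ref{arl2} from $\mathbb{R}/\mathbb{Z}$ to $\mathbb{R}/2\mathbb{Z}$ and from the real axis to complex height $s$, together with a transparent bookkeeping of the three independent factors of the form $e^{C\beta(\alpha)M}$ arising from (i) the cocycle bound (\ref{are7}), (ii) the Lagrange-interpolation inequality, and (iii) the Parseval transfer from height $0$ to height $s$, so that their product yields an exponent of the correct order $O(\beta(\alpha)n)$ rather than something larger.
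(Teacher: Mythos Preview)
Your argument is correct and follows essentially the same architecture as the paper: assume $V_\dag$ is small at a point, iterate the cocycle identity to propagate smallness along an orbit of length $\asymp n$, interpolate via Lemma~\ref{arl2} to obtain a uniform upper bound at the given imaginary height, and contradict a lower bound coming from $u_0=1$. The only substantive difference is the choice of lower-bound witness: the paper selects $V_\dag$ so that $\bigl\|\int_{\mathbb{R}/2\mathbb{Z}} e^{-i\pi\tilde n x}Q_\lambda^{-1}(x)V_\dag(x)\,\mathrm{d}x\bigr\|\ge\sqrt{2}$, a single Fourier coefficient which is \emph{contour-invariant} by Cauchy's theorem, so no transfer from height $0$ to height $s$ is required; you instead track the full $L^2$-mass and transfer it via Parseval, incurring an extra (harmless) factor $e^{-C\beta(\alpha)n}$. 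The paper also sidesteps your $\mathbb{R}/2\mathbb{Z}$-adaptation of Lemma~\ref{arl2} by passing to a $1$-periodic truncation (the $\Theta$-trick from the proof of Lemma~\ref{det}); your direct adaptation is legitimate since $\beta(\alpha/2)\le\beta(\alpha)$, but the paper's route is slightly cleaner bookkeeping.
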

\begin{proof}
Since $u_0=1$, we have
$$||\int_{\mathbb{R}/2\mathbb{Z}}\left(e^{-\widetilde{n}\pi ix}Q_{\lambda}^{-1}\Re{U_\dag}(x)+ie^{-\widetilde{n}\pi ix}Q_{\lambda}^{-1}(x)\Im{U_\dag}(x)\right)\mathrm{d}x||=2\sqrt{2}.$$
Thus we can choose  $ V_\dag=\Re{U_\dag}$  or  $V_\dag=\Im{U_\dag}$  such that
\begin{equation}\label{lower}
||\int_{\mathbb{R}/2\mathbb{Z}}e^{-\widetilde{n}\pi ix}Q_{\lambda}^{-1}(x)V_\dag(x)\mathrm{d}x||\geq\sqrt{2}.
\end{equation}
%By \eqref{ale}, $\mathcal{V}$  is real analytic on $\Delta_{\eta}$.
%In the following, we prove (\ref{de}) for $i=1$ only and the other cases are similar.
%In order to avoid repetition, we only give the proof of  (\ref{de}).
Suppose  (\ref{de})  is not true. Then there must be some $x_0\in\Delta_{40\beta(\alpha)}$ with $\Im{x_0}=t$ such that
\begin{equation}
||V_\dag(x_0)||\leq  c_{\star}e^{-C\beta(\alpha)n}.
\end{equation}
Following the arguments used in the  proof of Lemma \ref{det}, one has
\begin{equation*}
\sup_{x\in\mathbb{R}}||V_\dag(x+it)|| \leq C_{\star}e^{-C\beta(\alpha) n}.
\end{equation*}
Thus we obtain
\begin{equation*}
||\int_{\mathbb{R}/2\mathbb{Z}}e^{-\widetilde{n}\pi i(x+it)}Q_{\lambda}^{-1}(x+it)V_\dag(x+it)\mathrm{d}x||\leq C_{\star}e^{-C\beta(\alpha) n},
\end{equation*}
which is contradicted to  (\ref{lower}).

\end{proof}

One more lemma is necessary before the proof of Theorem \ref{p1}.

{\begin{lemma}\label{te}
Suppose
$\mathcal{L}_{\overline{\lambda}}>4000\pi C'\beta(\alpha)$. Then we have
\begin{equation}\label{liu3}
\sup_{0\leq k\leq  e^{\eta n}}||\overline{A}_k||_{\eta}\leq C_{\star}e^{C\beta(\alpha)n}.
\end{equation}
\end{lemma}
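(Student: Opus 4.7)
The plan is to mimic the triangularization-rescaling-iteration strategy used to prove the bound $\|\overline{A}_m\|_{\beta(\alpha)}\le m^{5100}$ in Lemma \ref{art1}, now exploiting the \emph{exact} invariance relation $\overline{A}(x)V_\dag(x)=\pm V_\dag(x+\alpha)$ from (\ref{se2})--(\ref{se3}) rather than the approximate one used there.

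First, I would apply Lemma \ref{arll} to $V_\dag$ on the strip $\Delta_{40\beta(\alpha)}$: the lower bound (\ref{de}) together with the upper bound (\ref{Blo2}) yields a matrix $B\in C^\omega(\Delta_{40\beta(\alpha)},\mathrm{SL}(2,\mathbb C))$ whose first column is $V_\dag$ and which satisfies $\|B\|,\|B^{-1}\|\le C_\star e^{C\beta(\alpha)n}$. Since $V_\dag$ is real-valued on $\mathbb R/2\mathbb Z$, one can arrange $B$ to take values in $\mathrm{SL}(2,\mathbb R)$ there. The invariance equation forces the first column of $B^{-1}(x+\alpha)\overline{A}(x)B(x)$ to be $(\sigma,0)^T$ with $\sigma=\pm1$, so by $\det=1$
\[
B^{-1}(x+\alpha)\,\overline{A}(x)\,B(x)=\begin{pmatrix}\sigma & a(x)\\ 0 & \sigma\end{pmatrix},\qquad \|a\|_{40\beta(\alpha)}\le C_\star e^{C\beta(\alpha)n}.
\]

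Next, I would introduce a diagonal rescaling $D=\operatorname{diag}(e^{-\kappa},e^{\kappa})$ with $\kappa\asymp(\eta+\beta(\alpha))n$ and set $B_1(x)=DB^{-1}(x)$. Then
\[
B_1(x+\alpha)\,\overline{A}(x)\,B_1^{-1}(x)=\begin{pmatrix}\sigma & e^{-2\kappa}a(x)\\ 0 & \sigma\end{pmatrix},
\]
whose $k$-step product is upper triangular with $\sigma^k$ on the diagonal and off-diagonal entry $\sigma^{k-1}e^{-2\kappa}\sum_{j=0}^{k-1}a(x+j\alpha)$; the choice of $\kappa$ is arranged so that the Birkhoff sum is bounded by $1$ uniformly in $k\le e^{\eta n}$. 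Conjugating back,
\[
\|\overline{A}_k\|_\eta\le \|B_1^{-1}\|_\eta\cdot 2\cdot \|B_1\|_\eta\le C_\star e^{C\beta(\alpha)n},
\]
after absorbing $e^{2\kappa}$ into the constant $C$; this absorption is permissible under the hypothesis $\mathcal{L}_{\overline\lambda}>4000\pi C'\beta(\alpha)$, which controls the ratio $\kappa/(\beta(\alpha)n)$ in terms of $\mathcal{L}_{\overline\lambda}/\beta(\alpha)$.

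The main obstacle is the precise balancing in the third step: the rescaling by $\kappa$ shrinks the off-diagonal but inflates both $\|B_1\|$ and $\|B_1^{-1}\|$ by $e^\kappa$, so one must verify that the total inflation $e^{2\kappa}\|B\|^2$ fits inside the target exponent $C\beta(\alpha)n$. This is precisely where the exactness of the triangular form is essential: unlike in the proof of Lemma \ref{art1}, there is no $(2,1)$-error term that has to be simultaneously controlled against the rescaling, so the only quantity to balance is the Birkhoff sum of $e^{-2\kappa}a$ over the iteration range $e^{\eta n}$. A secondary technical subtlety is that the quantitative lower bound (\ref{de}) is stated on $\Delta_{40\beta(\alpha)}$ rather than $\Delta_\eta$; this is addressed by re-running the argument of Lemma \ref{use} on the larger strip, where $V_\dag$ is analytic by the remark following (\ref{Blo2}).
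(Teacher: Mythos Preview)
Your rescaling step does not do what you claim. With the exact triangularization
\[
B^{-1}(x+\alpha)\overline{A}(x)B(x)=\begin{pmatrix}\sigma&a(x)\\0&\sigma\end{pmatrix},
\]
one has directly
\(
\overline{A}_k(x)=B(x+k\alpha)\begin{pmatrix}\sigma^{k}&\sigma^{k-1}\sum_{j=0}^{k-1}a(x+j\alpha)\\0&\sigma^{k}\end{pmatrix}B^{-1}(x),
\)
so \(\|\overline{A}_k\|\le \|B\|\,\|B^{-1}\|\,(1+k\|a\|)\). Inserting the diagonal \(D=\operatorname{diag}(e^{-\kappa},e^{\kappa})\) replaces this by
\(
\|\overline{A}_k\|\le e^{2\kappa}\|B\|\,\|B^{-1}\|\,(1+e^{-2\kappa}k\|a\|)=e^{2\kappa}\|B\|\,\|B^{-1}\|+\|B\|\,\|B^{-1}\|\,k\|a\|,
\)
which is never smaller than the unrescaled bound: the factor \(e^{-2\kappa}\) on the Birkhoff sum is exactly cancelled by the factor \(e^{2\kappa}\) coming from \(\|B_1\|\,\|B_1^{-1}\|\). (In Lemma~\ref{art1} the rescaling was useful because it traded a large \((1,2)\)-entry against a \emph{small} \((2,1)\)-entry; here the \((2,1)\)-entry is already zero, so there is nothing to trade.) For \(k\le e^{\eta n}\) you therefore obtain at best \(\|\overline{A}_k\|\le C_\star e^{\eta n+C\beta(\alpha)n}\), not \(C_\star e^{C\beta(\alpha)n}\). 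Your absorption argument is also the wrong way round: the hypothesis \(\mathcal{L}_{\overline\lambda}>4000\pi C'\beta(\alpha)\) bounds \(\eta/\beta(\alpha)\) from \emph{below}, so \(\kappa/(\beta(\alpha)n)\asymp\eta/\beta(\alpha)\) can be arbitrarily large and \(e^{2\kappa}\) cannot be written as \(e^{C\beta(\alpha)n}\) with an absolute \(C\).

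The paper avoids this by \emph{not} passing to \(V_\dag\): it conjugates by a matrix built from the (truncated) complex Bloch vector \(U_\star^{I_2}\) on the wide strip \(\Delta_{h/3}\), so the diagonal of the conjugated cocycle is \(\operatorname{diag}(e^{2\pi i\theta},e^{-2\pi i\theta})\) rather than \(\pm I\). This is the key point: since \(2\theta=\widetilde{n}\alpha\not\equiv0\), the homological equation for the \((1,2)\)-entry \(b\) has small divisors \(|1-e^{-2\pi i(2\theta-k\alpha)}|\ge c(\alpha)e^{-C\beta(\alpha)n}\) for \emph{all} \(|k|<n\), including \(k=0\). Solving for the low modes and using analyticity on \(\Delta_{h/3}\) to bound the high modes leaves a perturbation of size \(e^{-hn/20}\), which then permits \(e^{hn/20}=e^{\eta n}\) iterations with \(\|T_1\|\)-controlled growth. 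With your diagonal \(\pm1\), the zero mode \([a]\) cannot be removed, and bounding \([a]\) a priori is precisely the content of Theorem~\ref{p1}, whose proof invokes Lemma~\ref{te}; so any attempt to salvage your route via the homological equation would be circular.
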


\begin{proof}
 Recalling Lemma \ref{ustar} (with $N$ being replaced by $n$), we have $c_\star e^{-C\beta(\alpha) n}\leq ||U_\star^{I_2}(x)||\leq C_\star e^{C\beta(\alpha)n}$ for all $x\in\Delta_{\frac{h}{3}}$. Then by Lemma \ref{arll}, there is some $T(x):\mathbb{R}/\mathbb{Z}\rightarrow \mathrm{SL}(2,\mathbb{R})$ being analytic on $\Delta_{\frac{h}{3}}$ with $||T||_{\frac{h}{3}},||T^{-1}||_{\frac{h}{3}}\leq C_\star e^{C\beta(\alpha) n}$ such that
\begin{equation*}
T^{-1}(x+\alpha)\overline{A}(x)T(x)=\left[\begin{array}{cc}e^{2\pi i\theta}&0\\
0&e^{-2\pi i\theta}\end{array}\right]+\left[\begin{array}{cc}\beta_1{(x)}&b(x)\\
\beta_2{(x)}&\beta_3{(x)}\end{array}\right],
\end{equation*}
where $||\beta_1||_{\frac{h}{3}},||\beta_2||_{\frac{h}{3}},||\beta_3||_{\frac{h}{3}}\leq C_\star e^{-\frac{h}{10}n}$ and $||b||_{\frac{h}{3}}\leq C_\star e^{C\beta(\alpha) n}$.

Consider now $W(x)=\left[\begin{array}{cc}1&\phi(x)\\ 0&1\end{array}\right]$ with $\phi(x)=\sum\limits_{|k|< n}\widehat{\phi}_ke^{2\pi k ix}$, where $$\widehat{\phi}_k=-\widehat{b}_k\frac{e^{-2\pi i\theta}}{1-e^{-2\pi i(2\theta-k\alpha)}}$$ and $\widehat{b}_k$ is the Fourier coefficient of
$b(x)$. Since $||2\theta-k\alpha||\geq c(\alpha) e^{-C\beta(\alpha) n}$ when $|k|<n$, one has $||W||_{\frac{h}{3}},||W^{-1}||_{\frac{h}{3}}\leq C_\star e^{C\beta(\alpha) n}$. By taking $T_1(x)=T(x)W(x)$, we have
\begin{equation}\label{long}
T_1^{-1}(x+\alpha)\overline{A}(x)T_1(x)=\left[\begin{array}{cc}e^{2\pi i\theta}&0\\
0&e^{-2\pi i\theta}\end{array}\right]+H(x),
\end{equation}
where $||H(x)||_{\frac{h}{3}}\leq e^{-\frac{h}{20}}$ for $n>n(\lambda,\alpha)$ (since $||b'||_{\frac{h}{3}}\leq C_\star e^{-\frac{h}{5}n}$ for $b'(x)=\sum\limits_{|k|\geq n}\widehat{b}_ke^{2\pi k ix}$). Thus by iterating (\ref{long}) at most $e^{\frac{h}{20}n}$ steps, we have
$$\sup_{0\leq k\leq  e^{\frac{h}{20}n}}||\overline{A}_k||_{\frac{h}{3}}\leq C_{\star}e^{C\beta(\alpha)n}.$$
Then (\ref{liu3}) follows.
\end{proof}}

\textbf{Proof of Theorem \ref{p1}}

\begin{proof}

%(\ref{p1e3}) follows from  Theorem 4.7 directly in \cite{LYJFG}.

Let
\begin{equation}\label{se6}
B_1(x)=\left[\begin{array}{cc}V_\dag(x)&T\frac{V_\dag(x)}{||V_\dag(x)||^2}\end{array}\right],
\end{equation}
where $T\left(
          \begin{array}{c}
            x \\
            y \\
          \end{array}
        \right)
=\left(
          \begin{array}{c}
            -y\\
            x \\
          \end{array}
        \right)$ and $V_\dag$ is given by Lemma \ref{use}.
%Thus $R^{(1)}$ is analytic on $\Delta_{\eta}$ by  (\ref{de1}).
It is easy to check that $B_1\in C^{\omega}(\mathbb{R}/\mathbb{Z},{\rm PSL}(2,\mathbb{R}))$. From (\ref{Blo2}), (\ref{de}) and (\ref{se6}), we have \begin{equation}\label{se7}
||B_1^{-1}||_{40 \beta(\alpha)},||B_1||_{40 \beta(\alpha)}\leq C_{\star}e^{C\beta(\alpha)n}.
\end{equation}
By (\ref{se2}), (\ref{se3}), (\ref{se6}) and (\ref{se7}), one has
\begin{equation}\label{se8}B_1^{-1}(x+\alpha)\overline{A}(x)B_1(x) =\left[\begin{array}{cc}\pm1&\nu(x)\\
0&\pm1\end{array}\right],
\end{equation}
where %$\nu(x)$ is analytic on $\Delta_{\eta}$ and
 \begin{equation}\label{se9}
||\nu||_{40\beta(\alpha)}\leq C_{\star}e^{C\beta(\alpha)n}.
\end{equation}

 Now we will reduce the right hand side  of (\ref{se8}) to a constant cocycle  by solving a homological equation. More concretely, let $\phi(x)$ be a function defined on $\mathbb{R}/\mathbb{Z}$ such that
$[\phi]=0$ and
\begin{equation*}
\left[\begin{array}{cc}1&\phi(x+\alpha)\\
0&1\end{array}\right]^{-1}\left[\begin{array}{cc}\pm1&\nu(x)\\
0&\pm1\end{array}\right]\left[\begin{array}{cc}1&\phi(x)\\
0&1\end{array}\right]=\left[\begin{array}{cc}\pm1&[\nu]\\
0&\pm1\end{array}\right].
\end{equation*}
This can be done if we let
\begin{equation}\label{se11}
\pm\phi(x+\alpha)\mp\phi(x)=\nu(x)-[\nu].
\end{equation}
By comparing the Fourier series of  (\ref{se11}), one has
\begin{equation}\label{se12}
\widehat{\phi}_k=\pm \frac{\widehat{\nu}_k}{e^{2\pi ik\alpha}-1}\ (k\neq0),
\end{equation}
where $\widehat{\phi}_k$ and $\widehat{\nu}_k$ are the Fourier coefficients of $\phi(x)$ and $\nu(x)$ respectively.

 By the definition of $\beta(\alpha)$, we have the following
\begin{equation}\label{small}
 ||k\alpha||_{\mathbb{R}/\mathbb{Z}}\geq C(\alpha)e^{-2\beta(\alpha)|k|}, k\neq0.
\end{equation}
Combining \eqref{se12} with \eqref{se9}, one has
\begin{equation}\label{se13}
||\phi||_{20\beta(\alpha)}\leq C_{\star}e^{C\beta(\alpha)n}.
\end{equation}
%Moreover, $\phi(x)$  is analytic  on  $\Delta_{\frac{1}{2}\eta}$ by the fact .
 Let
 \begin{equation}\label{se14}
B(x)=B_1(x)\left[\begin{array}{cc}1&\phi(x)\\
0&1\end{array}\right].
\end{equation}
By (\ref{se7})  and \eqref{se13},  one has
\begin{equation}\label{liu2}
    ||B||_{20\beta(\alpha)},||B^{-1}||_{20\beta(\alpha)}\leq  C_{\star}e^{C\beta(\alpha)n}.
\end{equation}
This implies (\ref{p1e2}).
%Recall that $$\mu=[\nu]=\int_{\mathbb{R}/\mathbb{Z}}\nu(x)\mathrm{d}x.$$
%So, one has
%\begin{equation}\label{se15}
%|\mu|\leq||\nu||_{ 40\beta(\alpha)}\leq C_{*}e^{C\beta(\alpha)n}.
%\end{equation}
Now we are in the position to
give an  estimate on $a_m$. From (\ref{se8}) and (\ref{se14}), we obtain
\begin{equation*}\label{se16}
B^{-1}(x+\alpha)\overline{A}(x)B(x) =\left[\begin{array}{cc}\pm1&a_m\\
0&\pm1\end{array}\right].
\end{equation*}
Thus for any $l\in \mathbb{N}$, one gets
\begin{equation}\label{se17}
B^{-1}(x+l\alpha)\overline{A}_l(x)B(x)=\left[\begin{array}{cc}\pm1&la_m\\
0&\pm1\end{array}\right].
\end{equation}
Letting $l=l_0=\lfloor  e^{\frac{3}{4}\eta n} \rfloor$ in \eqref{se17}, one has
\begin{eqnarray}
% \nonumber to remove numbering (before each equation)
  \nonumber l_0 |a_m| &\leq & ||B^{-1}||_{20\beta(\alpha)}||\overline{A}_{l_0}||_{20\beta(\alpha)}||B||_{20\beta(\alpha)} \\
   &\leq&   C_{\star} e^{C\beta(\alpha)n},\label{liu4}
\end{eqnarray}
where the second inequality follows from \eqref{liu3}  and  \eqref{liu2}.

It is easy to see \eqref{p1e1} follows from \eqref{liu4} directly.

Obviously, (\ref{p1en3}) follows from the same arguments used in the proof of Theorem \ref{art}.

\end{proof}

Without loss of generality, we assume
 the reduced cocycle given by  Theorem \ref{p1} is \begin{equation}\label{pn1}
P=\left[\begin{array}{cc}1&a_m\\
0&1\end{array}\right].
\end{equation}
We will give a detailed description of
\begin{equation}\label{R}
R(x)=\left[\begin{array}{cc}R_{11}(x)&R_{12}(x)\\
R_{21}(x)&R_{22}(x)\end{array}\right],
\end{equation}
where $R(x)=\frac{B(x)}{\sqrt{|c|(x-\alpha)}}$ and $B(x)$ is given by Theorem \ref{p1}. Since $\lambda\in\mathrm{II}$, we have $\inf\limits_{x\in\mathbb{R}/\mathbb{Z}}|c_\lambda|(x)>0$.%Actually, we have

\begin{lemma}\label{p2}
Let ${[R_{ij}(x)]_{i,j\in\{1,2\}}}$ be  given by  (\ref{R}). Then we have
\begin{enumerate}
\item  [$\mathrm{(i)}$]%for $x\in\mathbb{R}/\mathbb{Z}$
\begin{eqnarray}
%\label{Re1}
\nonumber&&R_{21}(x+\alpha)=R_{11}(x),\\
%\label{Re2}
\nonumber&&R_{22}(x+\alpha)=R_{12}(x)-a_m R_{11}(x),\\
 \label{Re3}&&R_{11}(x+\alpha)R_{12}(x)
-R_{12}(x+\alpha)R_{11}(x)=\frac{1}{|c|(x)}+a_m R_{11}(x+\alpha)R_{11}(x);
\end{eqnarray}
 \item[$\mathrm{(ii)}$]
 \begin{equation}\label{Re4}
 [R_{11}^2]=[R_{21}^2]\geq c_\star{||R||_0^{-2}}>0;
\end{equation}
%\item
%\begin{equation}\label{Re5}
%[R_{11}^2][R_{12}^2]-[R_{11}R_{12}]^2>0;
%\end{equation}
\item[$\mathrm{(iii)}$] {For $|m|\geq m(\lambda,\alpha)\gg1$} \begin{equation}[R_{11}^2][R_{12}^2]-[R_{11}R_{12}]^2>0;\end{equation}
\item[$\mathrm{(iv)}$]{For $|m|\geq m(\lambda,\alpha)\gg1$}
 \begin{eqnarray}
\label{Re6}&&\frac{[R_{11}^2]}{[R_{11}^2][R_{12}^2]-[R_{11}R_{12}]^2}\leq C_{\star}||R||_0^2,\\
\label{Re7}&&[R_{11}^2][R_{12}^2]-[R_{11}R_{12}]^2\geq c_{\star}||R||_0^{-4}.
\end{eqnarray}
 \end{enumerate}
\end{lemma}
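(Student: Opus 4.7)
The plan is to derive everything from the conjugacy $\overline{A}_{\lambda,E}(x) B(x) = B(x+\alpha) P$ (with $P$ as in \eqref{pn1}) together with the identity $\det R(x) = 1/|c|(x-\alpha)$, which follows from $\det B \equiv 1$ on a lift to $\mathrm{SL}(2,\mathbb{R})$. For part (i), I would read off the second row of the conjugacy in the coordinates $B(x) = \sqrt{|c|(x-\alpha)}\,R(x)$: the factor $1/\sqrt{|c|(x)|c|(x-\alpha)}$ appearing in $\overline{A}_{\lambda,E}$ cancels the right $\sqrt{|c|}$ exactly, yielding $R_{21}(x+\alpha) = R_{11}(x)$ and $R_{22}(x+\alpha) = R_{12}(x) - a_m R_{11}(x)$. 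Evaluating $\det R$ at $x+\alpha$ and substituting these two relations produces the Wronskian identity \eqref{Re3}.

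For (ii), the equality $[R_{11}^2] = [R_{21}^2]$ is immediate from $R_{21}(x) = R_{11}(x-\alpha)$ and translation invariance. For the lower bound, integrate the determinant identity to obtain $[R_{11}R_{22}] - [R_{12}R_{21}] = \int_{\mathbb{R}/\mathbb{Z}} 1/|c|(x)\,\mathrm{d}x =: \kappa_{\lambda} > 0$; Cauchy--Schwarz on each bracket, together with $[R_{21}^2] = [R_{11}^2]$, gives $\kappa_{\lambda} \leq 2\|R\|_0\sqrt{[R_{11}^2]}$, which is \eqref{Re4}. For (iii), since $[R_{11}^2] > 0$, vanishing of the Gram determinant would force $R_{12} \equiv cR_{11}$ for some $c \in \mathbb{R}$; inserting this into \eqref{Re3} annihilates the left-hand side and leaves $0 = 1/|c|(x) + a_m R_{11}(x+\alpha)R_{11}(x)$, hence $|a_m|\|R\|_0^2 \geq c_\star$. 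But \eqref{p1e1}, \eqref{p1e2} and the bound $n \geq |m|/C$ from \eqref{p1en3} give $|a_m|\|R\|_0^2 \leq C_\star e^{-(\eta/2 - 2C\beta(\alpha))n}$, and the hypothesis $\mathcal{L}_{\overline{\lambda}} > 4000\pi C'\beta(\alpha)$ makes the exponent negative, forcing the left side to $0$ as $|m| \to \infty$; contradiction for $|m| \geq m(\lambda,\alpha)$.

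The main work is (iv), where I would exploit the Gram-matrix factorisation
\begin{equation*}
[R_{11}^2][R_{12}^2] - [R_{11}R_{12}]^2 \;=\; [R_{11}^2]\cdot \min_{c\in\mathbb{R}}\|R_{12}-cR_{11}\|_{L^2}^2,
\end{equation*}
which reduces \eqref{Re6} to the uniform lower bound $\|R_{12}-cR_{11}\|_{L^2}^2 \geq c_\star\|R\|_0^{-2}$ for every $c\in\mathbb{R}$. Given such $c$, set $\varepsilon(x) := R_{12}(x)-cR_{11}(x)$. The $c$-linear terms cancel when substituting into \eqref{Re3}, leaving
\begin{equation*}
R_{11}(x+\alpha)\varepsilon(x) - R_{11}(x)\varepsilon(x+\alpha) \;=\; \tfrac{1}{|c|(x)} + a_m R_{11}(x+\alpha) R_{11}(x).
\end{equation*}
Squaring and integrating, the left-hand side contributes at most $4\|R\|_0^2\|\varepsilon\|_{L^2}^2$, while the $L^2$-norm squared of the right-hand side is bounded below by $\int 1/|c|^2 - C_\star |a_m|\|R\|_0^2 \geq c_\star$ for $|m| \geq m(\lambda,\alpha)$ (using the same smallness of $|a_m|\|R\|_0^2$ exploited in (iii)); this yields $\|\varepsilon\|_{L^2}^2 \geq c_\star\|R\|_0^{-2}$, proving \eqref{Re6}. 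The bound \eqref{Re7} then follows by combining \eqref{Re6} with \eqref{Re4}. The delicate point is controlling the right-hand side of the displayed identity: it is a fixed positive-mean quantity perturbed by the cross term $a_m R_{11}(x+\alpha)R_{11}(x)$, so the whole argument rests on the exponential smallness of $a_m$ from Theorem \ref{p1} being strong enough to beat the polynomial-in-$\|R\|_0$ losses coming from the small-divisor estimates.
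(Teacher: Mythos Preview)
Your proposal is correct and follows essentially the same route as the paper. The only cosmetic differences are: in (ii) the paper bounds $[1/|c|^2]$ via the column-norm inequality $|\det R|^2 \leq (R_{11}^2+R_{21}^2)(R_{12}^2+R_{22}^2)$ rather than integrating $\det R$ directly and applying Cauchy--Schwarz to the two bilinear terms; in (iii) the paper substitutes $R_{12}=\mu R_{11}$ into $\det R(x)=1/|c|(x-\alpha)$ instead of into \eqref{Re3}; and in (iv) the paper works only with the minimising constant $c=[R_{11}R_{12}]/[R_{11}^2]$ (calling the resulting function $\widehat{R}$) rather than with an arbitrary $c$, but the identity, the $L^2$ upper bound on the left side, and the lower bound on the right side via smallness of $|a_m|\|R\|_0^2$ are identical to yours.
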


\begin{proof}

(i). Recall (\ref{pn1}) and \begin{equation}\label{Ri}
\left[\begin{array}{cc}\frac{E-2\cos2\pi x}{|c|(x)}&\frac{-|c|(x-\alpha)}{|c|(x)}\\
1&0\end{array}\right]R(x)=R(x+\alpha)\left[\begin{array}{cc}1&a_m\\
0&1\end{array}\right].\end{equation}
 Then this is done by the direct computations.

(ii). Noting $\det(R(x))=\frac{1}{|c|(x-\alpha)}\geq c_\star>0$ and using the Cauchy-Schwartz inequality, we obtain
\begin{eqnarray*}
c_\star\leq\left[\frac{1}{|c|^2(x-\alpha)}\right]&\leq& \left[(R_{11}^2+R_{21}^2)(R_{22}^2+R_{12}^2)\right]\\
&\leq& 2||R||_0^2[R_{11}^2+R_{21}^2]\\
&=&4||R||_0^2[R_{11}^2]\ \ \mbox{(from (i))}.
\end{eqnarray*}
Then (\ref{Re4}) follows.

(iii). { By using the Cauchy-Schwartz inequality, one has $[R_{11}^2][R_{12}^2]-[R_{11}R_{12}]^2\geq0$. If the equality holds, then there exists some $\mu\in\mathbb{R}$ such that $R_{12}(x)=\mu R_{11}(x)$. Thus by $\det(R(x))=\frac{1}{|c|(x-\alpha)}$, one has
\begin{equation*}\label{shi1}
-a_m R_{11}(x-\alpha)R_{11}(x)=\frac{1}{|c|(x-\alpha)}.
\end{equation*}
Recalling  (\ref{p1e1}) and (\ref{p1e2}) in Theorem \ref{p1},  we have for $|m|\geq m(\lambda,\alpha)\gg1$
\begin{equation*}\label{shi1}
0<c_\star\leq\frac{1}{|c|(x-\alpha)}\leq e^{-\frac{\eta}{3}n}.
\end{equation*}
This is a contradiction}.

%\begin{equation}\label{shi2}
%R_{11}(x+\alpha)=\frac{|c|(x-\alpha)}{|c|(x)}R_{11}(x-\alpha).
%\end{equation}
%Recalling (\ref{Ri}), we have
%\begin{equation*}
%\frac{E-2\cos2\pi x}{|c|(x)}R_{11}(x)-\frac{|c|(x-\alpha)}{|c|(x)}R_{11}(x-\alpha)=R_{11}(x+\alpha)
%\end{equation*}
%which together with (\ref{shi2}) implies
%\begin{equation}\label{shi3}
%(E-2\cos2\pi x)R_{11}(x)=2|c|(x-\alpha)R_{11}(x-\alpha).
%\end{equation}
%Combing (\ref{shi1}) with (\ref{shi3}), we have
%\begin{equation}\label{shi4}
%\frac{4}{E-2\cos2\pi x}=\frac{1}{|c|(x-\alpha)}>0,
%\end{equation}
%and thus $|E|>2$. By expanding (\ref{shi4}) into Fourier series, we obtain
%\begin{eqnarray}
%\label{shi5}4(\lambda_1^2+\lambda_2^2+\lambda_3^2)&=&E^2+2\cos2\pi \alpha,\\
%\label{shi6}2\lambda_2(\lambda_1+\lambda_3)&=&-E\cos\pi \alpha,\\
%\label{shi7}\lambda_1\lambda_3&=&\frac{1}{4}.
%\end{eqnarray}
%Combing (\ref{shi5}) with (\ref{shi6}), we get
%\begin{equation*}
%E^4-(4(\lambda_1^2+\lambda_2^2+\lambda_3^2)+2)E^2+16\lambda_2^2(\lambda_1+\lambda_3)^2=0,
%\end{equation*}
%which together with (\ref{shi7}) yields either $|E|=2\lambda_2$ or $|E|=2(\lambda_1+\lambda_3)$. If $|E|=2\lambda_2$ and by (\ref{shi6}), $\lambda_1+\lambda_3=|\cos\pi\alpha|<1$ which is impossible since $\lambda_1+\lambda_3\geq2\sqrt{\lambda_1\lambda_3}=1$. Otherwise, $2<|E|=2(\lambda_1+\lambda_3)$ and by (\ref{shi6}), $\lambda_2=|\cos\pi\alpha|<1$ which is also impossible since $\lambda_2>(\lambda_1+\lambda_3)>1$.

(iv). The proof is similar to  that in \cite{LYZZ}. Note
\begin{equation*}%\label{Re8}
\frac{[R_{11}^2][R_{12}^2]-[R_{11}R_{12}]^2}{[R_{11}^2]}=\left[\left(R_{12}-\frac{[R_{11}R_{12}]}{[R_{11}^2]}R_{11}\right)^2\right]
\end{equation*}
and define
\begin{equation}\label{Re9}
\widehat{R}(x)=R_{12}(x)-\frac{[R_{11}R_{12}]}{[R_{11}^2]}R_{11}(x).
\end{equation}
By (\ref{Re3}) and (\ref{Re9}), we have
\begin{equation}\label{Re10}
R_{11}(x+\alpha)\widehat{R}(x)-R_{11}(x)\widehat{R}(x+\alpha)=\frac{1}{|c|(x)}+a_m R_{11}(x+\alpha)R_{11}(x).
\end{equation}
By  the Cauchy-Schwartz inequality, we  have
\begin{equation}\label{Re11}
\left[\left|R_{11}(\cdot+\alpha)\widehat{R}(\cdot)-R_{11}(\cdot)\widehat{R}(\cdot+\alpha)\right|^2\right]\leq 4||R||_0^2[\widehat{R}^2].
%\nonumber&&\leq\left[\sqrt{\hat{R}^2(\cdot)+\hat{R}^2(\cdot+\alpha)}\sqrt{R_{11}^2(\cdot)+R_{11}^2(\cdot+\alpha)}\right]\\
%\nonumber&&\leq\sqrt{[\hat{R}^2(\cdot)+\hat{R}^2(\cdot+\alpha)] [R_{11}^2(\cdot)+R_{11}^2(\cdot+\alpha)]}\\
%\nonumber&&=2\sqrt{[\hat{R}^2][R_{11}^2]}\\
\end{equation}
Recalling  (\ref{p1e1}) and (\ref{p1e2}) in Theorem \ref{p1}, we get for
$n\geq n(\lambda,\alpha)$
\begin{equation}\label{Re12}
\left[\left|\frac{1}{|c|(x)}+a_m R_{11}(x+\alpha)R_{11}(x)\right|\right]
\geq c_\star.
\end{equation}
By (\ref{Re10}), (\ref{Re11}), (\ref{Re12}) and (iii) , one has
\begin{equation*}
[\widehat{R}^2]\geq c_\star ||R||_0^{-2}.
\end{equation*}
 Then (\ref{Re6}) is true.
 Finally,
 (\ref{Re7})
follows from  (\ref{Re4}), (\ref{Re6}) and (iii).
\end{proof}

\subsection{Perturbation at  boundary of a spectral gap}

%and the investigations of case $P=\left[\begin{array}{cc}-1&\mu\\0&-1\end{array}\right]$ are similar.
In this subsection, we will perturb the cocycle $(\alpha,\overline{A}_{E})$ (the dependence on $\lambda$ is left implicit) at the boundary of a spectral gap $G_m$ with $m\in\mathbb{Z}\setminus\{0\}$.

\begin{lemma}\label{pnl1}
Let $R(x)$ be as in Lemma \ref{p2} and $P$ be as in (\ref{pn1}). Then for any $\epsilon \in\mathbb{R}, x\in\mathbb{R}/\mathbb{Z}$, we have
\begin{equation}\label{pn2}
B^{-1}(x+\alpha)\overline{A}_{E+\epsilon}(x)B(x) =P+\epsilon \widetilde{P}(x),
\end{equation}
where
\begin{equation}\label{pn3}
\widetilde{P}(x)=\left[\begin{array}{cc}R_{11}(x)R_{12}(x)-a_m R_{11}^2(x)& R_{12}^2(x)-a_m R_{11}(x)R_{12}(x)\\
-R_{11}^2(x)&-R_{11}(x)R_{12}(x)\end{array}\right].
\end{equation}
%and $E=E_m^+$  or $E=E_m^-$ .
\end{lemma}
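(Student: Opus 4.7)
The plan is to reduce this to a direct computation by first separating off the $\epsilon$-dependent part of the cocycle. From the formula for $\overline{A}_{\lambda,E}$, the map $E\mapsto \overline{A}_{\lambda,E}(x)$ is affine, and explicitly
\[
\overline{A}_{E+\epsilon}(x)-\overline{A}_{E}(x)=\frac{\epsilon}{\sqrt{|c|(x)|c|(x-\alpha)}}\begin{bmatrix}1&0\\0&0\end{bmatrix}.
\]
Since $B^{-1}(x+\alpha)\overline{A}_{E}(x)B(x)=P$ by Theorem \ref{p1} (with our normalization $P=\bigl[\begin{smallmatrix}1&a_m\\0&1\end{smallmatrix}\bigr]$), the identity (\ref{pn2}) is equivalent to showing
\[
\widetilde P(x)=\frac{1}{\sqrt{|c|(x)|c|(x-\alpha)}}\,B^{-1}(x+\alpha)\begin{bmatrix}1&0\\0&0\end{bmatrix}B(x).
\]

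Next I would rewrite this conjugation in terms of $R$. Using $B(x)=\sqrt{|c|(x-\alpha)}\,R(x)$ and $B(x+\alpha)=\sqrt{|c|(x)}\,R(x+\alpha)$, together with $\det B\equiv 1$ so that $B^{-1}(x+\alpha)$ is given by the usual cofactor formula, the two $\sqrt{|c|}$-factors in the numerator cancel the $\sqrt{|c|(x)|c|(x-\alpha)}$ in the denominator, leaving
\[
\widetilde P(x)=\begin{bmatrix}R_{22}(x+\alpha)R_{11}(x)&R_{22}(x+\alpha)R_{12}(x)\\-R_{21}(x+\alpha)R_{11}(x)&-R_{21}(x+\alpha)R_{12}(x)\end{bmatrix}.
\]

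Finally, I would substitute the two shift relations from Lemma \ref{p2}(i), namely $R_{21}(x+\alpha)=R_{11}(x)$ and $R_{22}(x+\alpha)=R_{12}(x)-a_mR_{11}(x)$, which immediately reproduce (\ref{pn3}). There is no real obstacle here: the whole statement is essentially bookkeeping, and the only substantive input is that the reducing conjugacy from Theorem \ref{p1} sends $\overline{A}_E$ to the constant $P$ and that $R$ satisfies the two functional relations already derived in Lemma \ref{p2}(i). One small point to be careful about is the sign/cofactor pattern of $B^{-1}(x+\alpha)$; matching it against the shifted entries of $R$ is what produces the minus signs in the bottom row of $\widetilde P$.
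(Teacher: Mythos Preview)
Your proposal is correct and is exactly the computation the paper has in mind: its proof is the single line ``This follows from (i) of [Lemma~\ref{p2}],'' and you have written out precisely that argument, separating the affine $\epsilon$-part of $\overline{A}_{E+\epsilon}$, passing to $R$ via $B(x)=\sqrt{|c|(x-\alpha)}\,R(x)$, and then invoking the shift relations $R_{21}(x+\alpha)=R_{11}(x)$ and $R_{22}(x+\alpha)=R_{12}(x)-a_mR_{11}(x)$.
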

\begin{proof}
This follows from (i) of Theorem \ref{p2}.
\end{proof}
Next, we will tackle the perturbed cocycle $(\alpha,P+\epsilon\widetilde{P})$ given by (\ref{pn2}). We use the averaging method here. We want to reduce  $(\alpha,P+\epsilon\widetilde{P})$  to a new constant cocycle plus a  more smaller perturbation. In the following, we assume $|m|>m(\lambda,\alpha)$.
\begin{lemma}[Theorem 4.2 of \cite{LS}]\label{p3}
Let  $\delta=5\beta(\alpha)$. Then  the following statements hold.
 \begin{enumerate}
 \item [$\mathrm{(i)}$]For any $|\epsilon|\leq\frac{1}{C(\alpha)||R||_{2\delta}^{2}}$, there exist  some $B_{1,\epsilon}, \widetilde{P}_{1,\epsilon}\in C^{\omega}(\mathbb{R}/\mathbb{Z},{\rm SL}(2,\mathbb{R}))$ and $P_{1,\epsilon}\in {\rm SL}(2,\mathbb{R})$ such that
\begin{equation*}%\label{pn4}
B_{1,\epsilon}^{-1}(x+\alpha)(P+\epsilon\widetilde{P}(x))B_{1,\epsilon}(x) =P_{1,\epsilon}+\epsilon^2\widetilde{P}_{1,\epsilon}(x)
\end{equation*}
and
\begin{eqnarray}
\label{pn5}&&||B_{1,\epsilon}-I||_\delta\leq C_\star||R||_{2\delta}^2|\epsilon|,\\
\label{pn6}&&||P_{1,\epsilon}-P||\leq C_\star||R||_{2\delta}^2|\epsilon|,\\
\label{pn7}&&||\widetilde{P}_{1,\epsilon}||_\delta\leq C_\star||R||_{2\delta}^4,\\
\label{pn10}&&P_{1,\epsilon}=P+\epsilon[\widetilde{P}].
\end{eqnarray}
\item[$\mathrm{(ii)}$]
For any $|\epsilon|\leq\frac{1}{C(\alpha)||R||_{2\delta}^{4}}$, there exist some $B_{2,\epsilon},\widetilde{P}_{2,\epsilon}\in C^{\omega}(\mathbb{R}/\mathbb{Z},{\rm SL}(2,\mathbb{R}))$ and $P_{2,\epsilon}\in {\rm SL}(2,\mathbb{R})$ such that
\begin{equation}\label{pn11}
B_{2,\epsilon}^{-1}(x+\alpha)(P_{1,\epsilon}+\epsilon^2\widetilde{P}_{1,\epsilon}(x))B_{2,\epsilon}(x) =P_{2,\epsilon}+\epsilon^3\widetilde{P}_{2,\epsilon}(x)\\
\end{equation}
and
\begin{eqnarray}
\label{pn12}&&||B_{2,\epsilon}-I||_0\leq  C_\star||R||_{2\delta}^4\epsilon^2,\\
\label{pn13}\nonumber&&||P_{2,\epsilon}-P_{1,\epsilon}||\leq C_\star||R||_{2\delta}^4\epsilon^2,\\
%\label{pn14}&&||\widehat{P}_{2,\epsilon}||_0\leq C(\alpha)M^3||R||_{2\delta}^4||R||_0^2,\\
\label{pn15}\nonumber&&||\widetilde{P}_{2,\epsilon}||_0\leq C_\star||R||_{2\delta}^8,\\
\label{pn16}\nonumber&&P_{2,\epsilon}=P_{1,\epsilon}+\epsilon^2[\widetilde{P}_{1,\epsilon}].
\end{eqnarray}
\end{enumerate}
\end{lemma}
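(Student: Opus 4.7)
The plan is to carry out one averaging (KAM) step for part (i) and then iterate it for part (ii). Since $P$ is already constant, I would seek a near-identity conjugation of the form $B_{1,\epsilon}(x) = I + \epsilon Y(x)$ designed to kill the first-order-in-$\epsilon$ oscillatory part of $P+\epsilon\widetilde{P}(x)$. Expanding,
\begin{equation*}
B_{1,\epsilon}^{-1}(x+\alpha)(P+\epsilon\widetilde{P}(x))B_{1,\epsilon}(x) = P + \epsilon\bigl(PY(x)-Y(x+\alpha)P + \widetilde{P}(x)\bigr) + O(\epsilon^2),
\end{equation*}
so the problem reduces to solving the homological equation
\begin{equation}\label{planhomo}
Y(x+\alpha)P - PY(x) = \widetilde{P}(x) - [\widetilde{P}],
\end{equation}
with the constant part $[\widetilde{P}]$ absorbed into the new constant factor $P_{1,\epsilon}=P+\epsilon[\widetilde{P}]$.

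Next I would solve \eqref{planhomo} mode by mode in Fourier. Writing $P = I + a_m N$ with $N=\left[\begin{smallmatrix}0&1\\0&0\end{smallmatrix}\right]$, the $k$-th Fourier equation is
\begin{equation*}
(e^{2\pi i k\alpha}-1)\widehat{Y}_k + a_m\bigl(e^{2\pi i k\alpha}\widehat{Y}_k N - N\widehat{Y}_k\bigr) = \widehat{F}_k,\qquad F:=\widetilde{P}-[\widetilde{P}].
\end{equation*}
For $k=0$ the leading term vanishes and solvability reduces to the two scalar conditions $[F_{21}]=0$ and $\mathrm{tr}[F]=0$, both automatic since $F$ has vanishing mean; the two-dimensional kernel (diagonal $\widehat{Y}_0$'s) is harmless and can be fixed arbitrarily. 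For $k\neq 0$ the leading operator is the scalar $(e^{2\pi ik\alpha}-1)I$, which is invertible with loss $1/|e^{2\pi ik\alpha}-1|\lesssim e^{2\beta(\alpha)|k|}$ by \eqref{small}; this exponential small-divisor loss is absorbed by shrinking the analyticity strip from $2\delta$ to $\delta$, which is legitimate because $\delta=5\beta(\alpha)$ leaves a net margin of $3\beta(\alpha)$. Since $\widetilde{P}$ is quadratic in the entries of $R$ (see \eqref{pn3}), one gets $\|\widetilde{P}\|_{2\delta}\le C_\star\|R\|_{2\delta}^2$ and hence $\|Y\|_\delta\le C_\star\|R\|_{2\delta}^2$. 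Taking $B_{1,\epsilon}=I+\epsilon Y$, the $\epsilon$-linear oscillatory part cancels by design, $P_{1,\epsilon}=P+\epsilon[\widetilde{P}]$ emerges as advertised, and the remainder is an $\epsilon^2$-term whose entries are bilinear in $Y$ and $P+\epsilon\widetilde{P}$, hence controlled by $C_\star\|R\|_{2\delta}^4$. The smallness hypothesis $|\epsilon|\le 1/(C(\alpha)\|R\|_{2\delta}^2)$ ensures that $B_{1,\epsilon}$ is invertible with controlled inverse, yielding \eqref{pn5}--\eqref{pn10}.

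For part (ii) I would apply part (i) once more, now with $P_{1,\epsilon}$ playing the role of $P$, $\widetilde{P}_{1,\epsilon}$ the role of $\widetilde{P}$, and $\epsilon^2$ the role of the small parameter. Estimate \eqref{pn6} implies $\|P_{1,\epsilon}-P\|\le C_\star\|R\|_{2\delta}^2|\epsilon|\ll 1$, so $P_{1,\epsilon}$ is still close to unipotent and the same Fourier argument applies with at most constant deterioration of the small-divisor estimates; meanwhile \eqref{pn7} replaces $\|R\|_{2\delta}^2$ by $\|R\|_{2\delta}^4$ in the role of the ``quadratic-in-$R$'' oscillatory size. The smallness assumption $|\epsilon|\le 1/(C(\alpha)\|R\|_{2\delta}^4)$ in (ii) is precisely what is required so that $\epsilon^2\|R\|_{2\delta}^4$ satisfies the smallness hypothesis of (i) for the new data, and \eqref{pn11}--\eqref{pn16} follow.

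The main obstacle is the parabolic (unipotent) structure of $P$: because $P-I$ is a nonzero nilpotent matrix, the cohomological operator $Y\mapsto Y(\cdot+\alpha)P-PY(\cdot)$ has a nontrivial kernel and cokernel at $k=0$, so one must verify by hand that the required mean-zero conditions are automatically satisfied by $\widetilde{P}-[\widetilde{P}]$ and exploit the residual freedom in the diagonal part of $\widehat{Y}_0$. A secondary subtlety is that $a_m$ is exponentially small in $n$ by \eqref{p1e1}; but in \eqref{planhomo} $a_m$ enters only multiplicatively, never in a denominator, so it contributes no new loss to the estimates.
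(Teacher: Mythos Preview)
Your averaging/KAM approach is correct and is the standard method for this type of lemma; the paper itself gives no proof here but simply cites \cite{LS}, so there is nothing to compare against in this text beyond noting that your scheme is presumably the one carried out in that reference.

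Two minor technical points worth tightening. First, the commutant of $N=\left[\begin{smallmatrix}0&1\\0&0\end{smallmatrix}\right]$ is not the diagonal matrices but $\{aI+bN\}$, so your description of the $k=0$ kernel is slightly off; this does not affect the argument since $\widehat F_0=0$ anyway. Second, $B_{1,\epsilon}=I+\epsilon Y$ is not literally in $\mathrm{SL}(2,\mathbb{R})$; the clean fix is to take $B_{1,\epsilon}=e^{\epsilon Y}$ with $Y$ trace-free (or to normalize by $\sqrt{\det(I+\epsilon Y)}$), which changes nothing at first order and only contributes to the $O(\epsilon^2)$ remainder you already control. With these cosmetic adjustments your outline goes through and yields \eqref{pn5}--\eqref{pn10} and, upon iteration, \eqref{pn11}--\eqref{pn12}.
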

\begin{proof}
The proof can be found in \cite{LS}.

\end{proof}

%\begin{remark}
%We need to shrink the strip to overcome the small divisor condition (\ref{small}) when we solve the homological equations.
%\end{remark}

\begin{theorem}\label{go}
If $a_m\neq 0$, then the gap $G_m$ is open. Moreover, $a_m\geq 0$ if $E=E_m^+$.
\end{theorem}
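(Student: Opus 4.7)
The plan is to combine the reducibility at the boundary from Theorem \ref{p1} with the averaging method of Lemma \ref{p3}, and then to read off the consequences via the gap labelling theorem. By Lemma \ref{pnl1}, conjugating $\overline{A}_{\lambda,E+\epsilon}$ by $B$ yields $P+\epsilon\widetilde{P}(x)$ with $\widetilde{P}$ explicit in terms of $R$. One step of averaging (Lemma \ref{p3}(i)) conjugates this to $P_{1,\epsilon}+\epsilon^2\widetilde{P}_{1,\epsilon}(x)$, where $P_{1,\epsilon}=P+\epsilon[\widetilde{P}]$. Reading $\mathrm{tr}([\widetilde{P}])=-a_m[R_{11}^2]$ off (\ref{pn3}) gives
$$\mathrm{tr}(P_{1,\epsilon})=2-\epsilon\, a_m[R_{11}^2],$$
and since $[R_{11}^2]>0$ by Lemma \ref{p2}(ii), whenever $\epsilon a_m<0$ the matrix $P_{1,\epsilon}$ is hyperbolic with eigenvalues at distance $\sim\sqrt{|\epsilon a_m|}$ from the unit circle.

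Next I would apply one further averaging step, Lemma \ref{p3}(ii), to shrink the error to size $\epsilon^3$ while preserving the sign of $\mathrm{tr}-2$. Because hyperbolicity of a constant cocycle is stable under perturbations that are small relative to the distance to parabolic, and $\epsilon^3\ll\sqrt{|\epsilon|}$, for sufficiently small $\epsilon$ of the chosen sign the cocycle $(\alpha,\overline{A}_{\lambda,E+\epsilon})$ is uniformly hyperbolic, so $E+\epsilon\notin\Sigma_{\lambda,\alpha}$.

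The final step is to identify the gap containing $E+\epsilon$ via the rotation number. By continuity of $\rho_{\lambda,\alpha}$ and its constancy on each connected component of $\mathbb{R}\setminus\Sigma_{\lambda,\alpha}$, any such $E+\epsilon$ close enough to $E$ sits in a gap $G_k$ with $2\rho_{\lambda,\alpha}|_{G_k}=2\rho_{\lambda,\alpha}(E)=m\alpha\mod\mathbb{Z}$; irrationality of $\alpha$ together with the uniqueness in the gap labelling forces $k=m$. Hence $a_m\neq 0$ embeds a one-sided neighbourhood of $E$ into $G_m$, so $G_m$ is a non-degenerate interval, i.e.\ an open gap. For the sign assertion, if $E=E_m^+$ and $a_m<0$ then the above analysis with $\epsilon>0$ small produces points $E+\epsilon>E_m^+$ lying in $G_m\subset[E_m^-,E_m^+]$, a contradiction; hence $a_m\geq 0$.

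The main obstacle is the quantitative persistence of hyperbolicity in the second step: the hyperbolic gap opens only at rate $\sqrt{|\epsilon|}$ while the diagonalising conjugacy of $P_{1,\epsilon}$ blows up like $|\epsilon|^{-1/2}$, so the $\epsilon^3$-remainder has to be controlled against this condition number together with the bounds $\|R\|_{2\delta}, \|\widetilde{P}_{2,\epsilon}\|_0\leq C_\star e^{C\beta(\alpha)n}$ supplied by Theorem \ref{p1} and Lemma \ref{p3}. Since we only need the existence of some admissible $\epsilon$, it suffices to check that $|\epsilon|$ can be taken smaller than a fixed negative power of $e^{-\beta(\alpha)n}$, which is immediate from those bounds.
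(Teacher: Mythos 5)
Your proposal is correct and takes essentially the same route as the paper: both perform the averaging of Lemma \ref{p3}, read off $\mathrm{tr}(P_{1,\epsilon})=2-\epsilon a_m[R_{11}^2]$, and exploit the resulting change from hyperbolic to elliptic across $\epsilon=0$ together with the gap-labelling to conclude both openness and the sign. The paper is terser, performing only the first averaging step and citing Puig for the persistence-of-hyperbolicity point, whereas you make the second averaging step and the condition-number discussion explicit — a harmless strengthening of the same argument.
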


\begin{proof}
Let $B_{\star}(x)=B(x)B_{1,\epsilon}(x)$ with $B(x),B_{1,\epsilon}(x)$ being given by Theorem \ref{p1}, Lemma \ref{p3} respectively. Then we have $B_{\star}^{-1}(x+\alpha)\overline{A}_{E+\epsilon}(x)B_{\star}(x)=P_{1,\epsilon}+O(\epsilon^2)$ and $$\mathrm{Trace}(P_{1,\epsilon})=2-\epsilon a_m\left[R_{11}^2\right].$$ Since $a_m\neq 0$ and (\ref{Re4}),  one has  either $\mathrm{Trace}(P_{1,\epsilon})>2$ or $\mathrm{Trace}(P_{1,\epsilon})<2$ for $0<|\epsilon|\ll 1$. This implies that the spectral gap must be open (see \cite{Puig,Puig06}).
%either $E+\epsilon\Vdash \Sigma_{\lambda,\alpha}\footnote{This means that for any small $\epsilon$ there is $\epsilon_1$ with $|\epsilon_1|\leq |\epsilon|$ such that $E+\epsilon_1\in \Sigma_{\lambda,\alpha}$}, E-\epsilon\notin \Sigma_{\lambda,\alpha}$ or
%$E+\epsilon\notin \Sigma_{\lambda,\alpha}, E-\epsilon\Vdash \Sigma_{\lambda,\alpha}$.

 Suppose now $E=E_m^+$ and $a_m<0$. Then for $\epsilon<0, |\epsilon|\ll1$, we have $\mathrm{Trace}(P_{1,\epsilon})<2$  and $\mathrm{Trace}(P_{1,-\epsilon})>2$.  This is contradicted to the fact that the gap $G_m$ is open and $E=E_m^+$.

\end{proof}

Now we can state our main result of the perturbation at the boundary of a spectral gap.

\begin{theorem}\label{pnm}
Suppose $\delta= 5\beta(\alpha)$ and  $|\epsilon|\leq\frac{1}{C(\alpha)||R||_{2\delta}^4}$. Let $B_{\epsilon}(x)=B(x)B_{1,\epsilon}(x)B_{2,\epsilon}(x)\in C^{\omega}(\mathbb{R}/\mathbb{Z},{\rm PSL}(2,\mathbb{R}))$,
where $ B_{1,\epsilon}(x)$  and $B_{2,\epsilon}(x)$ are given by Lemma \ref{p3}.
Then  we have
\begin{equation}\label{pn55}
    B_{\epsilon}^{-1}(x+\alpha)\overline{A}_{E+\epsilon}(x)B_{\epsilon}(x)=e^{\Lambda+\epsilon\Lambda_1+\epsilon^2 \Lambda_2+\epsilon^3 \Omega(x)},
\end{equation}

where
\begin{eqnarray*}
\label{pn56}&&\Lambda=
\left[
\begin{array}{cc}
  0&a_m\\
  0&0
\end{array}
\right],\\
\label{pn57}&&\Lambda_1=
\left[
\begin{array}{cc}
  -\frac{a_m}{2}\left[R_{11}^2\right]+\left[R_{11}R_{12}\right]&-a_m\left[R_{11}R_{12}\right]+\left[R_{12}^2\right]\\
  -\left[R_{11}^2\right]&\frac{a_m}{2}\left[R_{11}^2]-[R_{11}R_{12}\right]
\end{array}
\right],\\
\label{pn58}&&\Lambda_2\in{\rm sl}(2,\mathbb{R}),\\
%\label{pn59}&&\mathfrak{R}\in C^{\omega}(\mathbb{R}/\mathbb{Z},\mathfrak{sl}(2,\mathbb{R})),\\
\label{pn60}&&||\Lambda_2||\leq C_\star||R||_{2\delta}^4,\\
%\label{pn61}&&||\mathfrak{R}_1||_0\leq C(\alpha)M^2||R||_{2\delta}^4,\\
%\label{pn62}&&||\mathfrak{R}_2||_0\leq C(\alpha)M^3||R||_{2\delta}^6,\\
\label{pn63}&&||\Omega||_0\leq C_\star||R||_{2\delta}^8.
\end{eqnarray*}
Moreover,
\begin{equation}\label{pn64}
\deg{(B_{\epsilon})}=\deg{(B)}.
\end{equation}
%and
%\begin{equation}\label{liu6}
   %2\deg{(\widehat{R}_{\epsilon}(x))}=m\alpha \text{ mod }\mathbb{Z}.
%\end{equation}

\end{theorem}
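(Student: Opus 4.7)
The plan is to carry out the three reducing conjugations already set up (Theorem \ref{p1}, Lemma \ref{p3}(i), Lemma \ref{p3}(ii)), combine the result into a single ${\rm PSL}(2,\mathbb{R})$-valued transformation $B_\epsilon$, and then rewrite the reduced constant-plus-error cocycle in exponential form by a matrix logarithm expansion around $P=e^{\Lambda}$.

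First I would concatenate the known reductions. Lemma \ref{pnl1} yields
\[
B^{-1}(x+\alpha)\overline{A}_{E+\epsilon}(x)B(x)=P+\epsilon\widetilde P(x).
\]
Applying Lemma \ref{p3}(i) gives
\[
B_{1,\epsilon}^{-1}(x+\alpha)\bigl(P+\epsilon\widetilde P(x)\bigr)B_{1,\epsilon}(x)=P+\epsilon[\widetilde P]+\epsilon^{2}\widetilde P_{1,\epsilon}(x),
\]
and then Lemma \ref{p3}(ii) yields
\[
B_{2,\epsilon}^{-1}(x+\alpha)\bigl(P+\epsilon[\widetilde P]+\epsilon^{2}\widetilde P_{1,\epsilon}(x)\bigr)B_{2,\epsilon}(x)=P+\epsilon[\widetilde P]+\epsilon^{2}[\widetilde P_{1,\epsilon}]+\epsilon^{3}\widetilde P_{2,\epsilon}(x)=:M(\epsilon,x).
\]
Hence with $B_\epsilon=B\cdot B_{1,\epsilon}\cdot B_{2,\epsilon}$ the left-hand side of (\ref{pn55}) equals $M(\epsilon,x)\in {\rm SL}(2,\mathbb{R})$. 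The smallness hypothesis $|\epsilon|\le 1/(C(\alpha)\|R\|_{2\delta}^{4})$ is exactly what is needed to apply both parts of Lemma \ref{p3} in succession.

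Next I would put $M(\epsilon,x)$ into exponential form. Since $\Lambda^{2}=0$, we have $e^{\Lambda}=I+\Lambda=P$, and for $\epsilon$ small $M(\epsilon,x)$ lies in a neighbourhood of $P$ on which the principal matrix logarithm is analytic; set $Y(\epsilon,x):=\log M(\epsilon,x)\in\mathfrak{sl}(2,\mathbb{R})$ and define $\Lambda_1,\Lambda_2,\Omega(x)$ as the coefficients in the expansion $Y=\Lambda+\epsilon\Lambda_1+\epsilon^{2}\Lambda_2+\epsilon^{3}\Omega(x)$. The explicit form of $\Lambda_1$ is then recovered by equating the $O(\epsilon)$ terms in
\[
M(\epsilon,x)=e^{\Lambda}+\epsilon\,D\!\exp(\Lambda)(\Lambda_1)+O(\epsilon^{2}),
\]
where the Fréchet derivative of $\exp$ at the nilpotent $\Lambda$ reads
\[
D\!\exp(\Lambda)(K)=K+\tfrac12(\Lambda K+K\Lambda)+\tfrac16\,\Lambda K\Lambda,\qquad K\in\mathfrak{sl}(2,\mathbb{R}).
\]
Solving $D\!\exp(\Lambda)(\Lambda_1)=[\widetilde P]$ for the traceless matrix $\Lambda_1$ is a direct $2\times2$ computation: the $\frac12(\Lambda K+K\Lambda)$ term contributes $\frac{a_m c_1}{2}I$ after using $\operatorname{tr}\Lambda_1=0$, which precisely produces the $\pm\frac{a_m}{2}[R_{11}^{2}]$ corrections on the diagonal of the stated $\Lambda_1$, while the off-diagonal entries match $[\widetilde P]$ directly from the formula (\ref{pn3}).

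Finally I would collect the quantitative bounds and check the degree statement. By construction $\Lambda_2$ is a polynomial expression in $[\widetilde P_{1,\epsilon}]$ and $\Lambda_1$ (hence in $[\widetilde P]$), and Lemma \ref{p3} gives $\|[\widetilde P_{1,\epsilon}]\|\le\|\widetilde P_{1,\epsilon}\|_\delta\le C_\star\|R\|_{2\delta}^{4}$ while $\|\Lambda_1\|^{2}\le C_\star\|R\|_{2\delta}^{4}$; together these yield $\|\Lambda_2\|\le C_\star\|R\|_{2\delta}^{4}$. The remainder $\Omega(x)$ inherits its bound from $\|\widetilde P_{2,\epsilon}\|_{0}\le C_\star\|R\|_{2\delta}^{8}$ and a Cauchy-type estimate for the higher-order terms of the log series, giving $\|\Omega\|_{0}\le C_\star\|R\|_{2\delta}^{8}$. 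For the degree, the estimates (\ref{pn5}) and (\ref{pn12}) give $\|B_{1,\epsilon}-I\|_{0},\|B_{2,\epsilon}-I\|_{0}<1$ for $\epsilon$ in the admissible range, so $B_{1,\epsilon}$ and $B_{2,\epsilon}$ are homotopic to the identity in ${\rm PSL}(2,\mathbb{R})$; additivity of degree under multiplication then yields $\deg(B_\epsilon)=\deg(B)$. The main bookkeeping obstacle is making sure the $\epsilon$-expansion of $\log M$ is carried out to the right precision so that the explicit entries of $\Lambda_1$ come out correctly and the residues of orders $\epsilon^{2}$ and $\epsilon^{3}$ are absorbed cleanly into $\Lambda_2$ and $\Omega(x)$ without spoiling the norm bounds.
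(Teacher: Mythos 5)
Your proof follows the same route as the paper: concatenate the conjugations from Lemma \ref{pnl1} and Lemma \ref{p3}(i)--(ii) to land on $P_{2,\epsilon}+\epsilon^{3}\widetilde P_{2,\epsilon}(x)$, then rewrite this in exponential form via the matrix logarithm and expansion around $e^{\Lambda}=P$, and finally get the degree statement from $\|B_{i,\epsilon}-I\|_{0}<1$ and homotopy to the identity; the paper's own proof is the same argument stated tersely. One small imprecision: when solving $D\!\exp(\Lambda)(\Lambda_1)=[\widetilde P]$ the term $\tfrac16\Lambda\Lambda_1\Lambda$ contributes $\tfrac{a_m^2}{6}(\Lambda_1)_{21}$ to the $(1,2)$-entry, so the off-diagonal entry does not ``match $[\widetilde P]$ directly'' as you claim but picks up an $O(a_m^{2})$ correction $\tfrac{a_m^2}{6}[R_{11}^{2}]$; this is the same exponentially small and harmless discrepancy already present in the paper's formula for $\Lambda_1$ and does not affect the norm bounds or the subsequent use of the theorem.
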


\begin{proof}
(\ref{pn55}) follows from \eqref{pn11} and some simple computations.

It suffices to prove (\ref{pn64}). From (\ref{pn5}) and (\ref{pn12}), we obtain for $|\epsilon|\leq\frac{1}{C(\alpha)||R||_{2\delta}^4}$
\begin{equation*}
||B_{1,\epsilon}-I||_0\leq \frac{1}{4},||B_{2,\epsilon}-I||_0\leq \frac{1}{4}.
\end{equation*}
Then both  $B_{1,\epsilon}$ and $B_{2,\epsilon}$ are homotopic  to the identity. This implies  (\ref{pn64}).
\end{proof}

\subsection{Exponential decay of the lengths of the spectral gaps}
We now prove our main theorem.

\textbf {Proof of Theorem \ref{main}}

\begin{proof}
Let $|m|\geq m(\lambda,\alpha)\gg1$ and $E=E_m^+$. Then by Theorem \ref{go}, we have $a_m\geq0$.

We first assume $a_m>0$. We let $\delta=5\beta(\alpha)>0$. From (\ref{p1e2}), one has
\begin{equation*}\label{pme3}
||R||_{2\delta}\leq e^{C\beta(\alpha)n}.
\end{equation*}
Then
\begin{equation}\label{pme4}
\frac{1}{C(\alpha)||R||_{2\delta}^4}\geq e^{-C\beta(\alpha) n}.
\end{equation}
 We define
\begin{equation*}\label{pme17}
\epsilon_m=\frac{-2a_m[R_{11}^2]}{[R_{11}^2][R_{12}^2]-\left[R_{11}R_{12}\right]^2}<0.
\end{equation*}
It follows from (\ref{p1e1}) and (\ref{Re6}) that
\begin{eqnarray}
    \nonumber|\epsilon_m|&\leq&C_{\star}e^{-\frac{1}{2}\eta n+C\beta(\alpha)n}\\
    %\nonumber&\leq& e^{-c\eta_2 n}\ \  \mbox{(for $C_1\gg1$)}\\
    \nonumber&\leq&\frac{1}{C(\alpha)||R||_{2\delta}^4}\ \  \mbox{(by (\ref{pme4}))}.
\end{eqnarray}
Thus we can apply Theorem \ref{pnm} with $ \epsilon=\epsilon_m<0$. Let
\begin{eqnarray}
   \nonumber \Sigma&=&\Lambda+\epsilon_m\Lambda_1+\epsilon_m^2\Lambda_2\\
   \nonumber &:=&\left[\begin{array}{cc}
    d_1&d_2\\
   d_3&-d_1 \end{array}\right] \in{\rm sl}(2,\mathbb{R}),
    \end{eqnarray}
    where
    \begin{eqnarray}
   \nonumber &&d_1=\epsilon_m \left([R_{11}R_{12}]-\frac{a_m}{2}[R_{11}^2]\right)+O(\epsilon_m^2||\Lambda_2||),\\
   \nonumber &&d_2=a_m+\epsilon_m\left([R_{12}^2]-a_m[R_{11}R_{12}]\right)+O(\epsilon_m^2||\Lambda_2||),\\
   \nonumber &&d_3=-\epsilon_m[R_{11}^2]+O(\epsilon_m^2||\Lambda_2||)
    \end{eqnarray}
 and
    \begin{eqnarray*}
        \Delta=\det{(\Sigma)}&=&\frac{{\epsilon_m}^2}{2} ([R_{11}^2][R_{12}^2]-[R_{11}R_{12}]^2)\\
        &&+O(|\epsilon_m|^3||R||_0^2||\Lambda_2||^2+a_m\epsilon_m^2||R||_0^4||\Lambda_2||).
    \end{eqnarray*}
Recalling (\ref{Re7}) and by the direct computations, one has
\begin{eqnarray}
\nonumber |d_1|&\leq&e^{C\beta(\alpha)n}a_m,\\
\nonumber |d_2|&\geq&e^{-C\beta(\alpha)n}a_m, d_2<0,\\
\nonumber \Delta&\geq& e^{-C\beta(\alpha)n} a_m^2>0.
\end{eqnarray}
{Thus we can reduce $\Sigma$ to an elliptic matrix by
\begin{equation*}
J=\left[
\begin{array}{cc}
0&\frac{\sqrt{-d_2}}{\Delta^{\frac{1}{4}}}\\
\frac{-\Delta^{\frac{1}{4}}}{\sqrt{-d_2}}&\frac{{d_1}}{\Delta^{\frac{1}{4}}\sqrt{-d_2}}
\end{array}
\right],\ J^{-1}=\left[\begin{array}{cc}
\frac{{d_1}}{\Delta^{\frac{1}{4}}\sqrt{-d_2}}&-\frac{\sqrt{-d_2}}{\Delta^{\frac{1}{4}}}\\
\frac{\Delta^{\frac{1}{4}}}{\sqrt{-d_2}}&0
\end{array}
\right],
\end{equation*}
\begin{equation*}
J^{-1}\Sigma J=\left[
\begin{array}{cc}
0&-\sqrt{\Delta}\\
\sqrt{\Delta}&0
\end{array}
\right].
\end{equation*}
Obviously, we have $J\in\mathrm{SL}(2,\mathbb{R})$ and for $|m|\gg1$
\begin{equation*}
    ||J||, ||J^{-1}||\leq \frac{1}{\Delta^{\frac{1}{4}}\sqrt{-d_2}}.
\end{equation*}
%and
%\begin{equation*}
   % ||J^{-1}||\leq \frac{e^{C\beta(\alpha)n}}{a_m},
%\end{equation*}
 Consequently, we obtain
\begin{equation}\label{pme18}
(B_{\epsilon_m}(x+\alpha)J)^{-1}\overline{A}_{E_m^{+}+\epsilon_m}(x)B_{\epsilon_m}(x)J=
e^{\sqrt{\Delta}\left(
\left[
\begin{array}{cc}
0&-1\\
1&0
\end{array}
\right]+\epsilon_m^3 \mathfrak{S}(x)
\right)},
\end{equation}
where
\begin{equation*}
\mathfrak{S}(x)=\frac{J^{-1}(\Omega(x))J}{\sqrt{\Delta}}
\end{equation*}
and
\begin{eqnarray}
% \nonumber to remove numbering (before each equation)
 \nonumber ||\epsilon_m^3\mathfrak{S}||_0&\leq&  C_{\star}e^{C\beta(\alpha)n}{\frac{|\epsilon_m|^3||R||_{2\delta}^8}{a_m^2}} \\
   \label{pme19} &\leq& e^{-\frac{1}{4}\eta n} \ll 1.
\end{eqnarray}
Let $\rho'$ be the fibered rotation number of the right hand side of (\ref{pme18}). Then $|\rho'|\sim \sqrt{\Delta}$  by (\ref{rr}) and (\ref{pme19}). We note that $2\rho_{\lambda,\alpha}(E_m^+)=m\alpha  \text{ mod } \mathbb{Z}$. Then recalling (\ref{pr2}), (\ref{pn64}) and (\ref{pme18}), we obtain $$2\rho_{\lambda,\alpha}(E_m^++\epsilon_m)=2\rho'+m\alpha  \text{ mod } \mathbb{Z}.$$
Thus for $|m|\gg1$, one has
$$||2\rho_{\lambda,\alpha}(E_m^++\epsilon_m)-m\alpha||_{\mathbb{R}/\mathbb{Z}}\gtrsim{\sqrt{\Delta}}>0. $$
This means  $2\rho_{\lambda,\alpha}(E_m^++\epsilon_m)\neq m\alpha\mod\mathbb{Z}$. Then $E_m^++\epsilon_m\notin G_{m}$ and
$$E_m^+-E_m^-\leq|\epsilon_m|\leq e^{-\frac{\eta}{3}n}\leq e^{-C^{-1}\eta |m|}.$$}

If $a_m=0$, then $\det(\Sigma)={\epsilon}^2 ([R_{11}^2][R_{12}^2]-[R_{11}R_{12}]^2)+O(\epsilon^3)$. Similarly to the analysis above, one has $E_m^+-E_m^-=O(\epsilon)$ for $|\epsilon|\ll1$. Thus the gap $G_m$ is collapsed and its length is equal to zero.

\end{proof}

\begin{remark}
If $\beta(\alpha)=0$, then the (almost) reducibility results for the EHM have been proved in \cite{Han}. In this case, all  proofs above are still valid. Essentially, the small divisor in case $\beta(\alpha)=0$ is ``better'' than that in the Liouvillean frequency case.
\end{remark}

{\appendix
\section{}
\begin{proof}[Proof of Lemma \ref{arl3}]

 (i) For $|n_j|>n(\alpha)$, we select $q_s<\frac{1}{20}|n_{j+1}|\leq q_{s+1}$. Thus $\lfloor\frac{q_{s+1}}{q_s}\rfloor \cdot q_s-1\geq \frac{q_{s+1}}{2.5}\geq \frac{|n_{j+1}|}{50}>9|n_j|$ by (\ref{aale3}). Let $r$ be minimal such that $1\leq r \leq \lfloor\frac{q_{s+1}}{q_s}\rfloor$ and  $rq_s-1>9|n_j|$. Then $rq_s-1\leq q_s+9|n_j|\leq \frac{1}{20}|n_{j+1}|+9|n_j|< \frac{1}{9}|n_{j+1}|$.
Obviously, $l=rq_s-1<q_{s+1}$.

(ii) Since $|n_j|\rightarrow\infty$ as $j\rightarrow\infty$, we can select $\frac{|n_j|}{50}<\frac{\ln |m|}{h}\leq\frac{|n_{j+1}|}{50}$. Then we have the following cases.

{\bf Case} 1. $\frac{|n_j|}{50}<\frac{\ln |m|}{h}\leq9|n_j|$. In this case, we select $q_s<25|n_j|\leq q_{s+1}$. Thus $\lfloor\frac{q_{s+1}}{q_s}\rfloor \cdot q_s-1\geq \frac{q_{s+1}}{2.5}\geq 10|n_j|$. Let $r$ be minimal such that  $1\leq r\leq \lfloor\frac{q_{s+1}}{q_s}\rfloor$ and $rq_s-1>9|n_j|$. Then $rq_s-1\leq q_s+9|n_j|\leq 34 |n_j|<\frac{|n_{j+1}|}{9}$. By taking $l=rq_s-1$, one has $\frac{\ln |m|}{h}\leq9|n_j|<l<34|n_j|\leq 1700\frac{\ln |m|}{h}$.

{\bf Case} 2. ${9|n_j|}<\frac{\ln |m|}{h}\leq\frac{|n_{j+1}|}{50}$. In this case, we select $q_s<3\frac{\ln |m|}{h}\leq q_{s+1}$. Thus $\lfloor\frac{q_{s+1}}{q_s}\rfloor \cdot q_s-1\geq \frac{q_{s+1}}{2.5}>\frac{\ln |m|}{h}$. Let $r$ be minimal such that $1\leq r\leq \lfloor\frac{q_{s+1}}{q_s}\rfloor$  and $rq_s-1>\frac{\ln |m|}{h}$. Then $rq_s-1\leq q_s+\frac{\ln |m|}{h}<4\frac{\ln |m|}{h}<\frac{|n_{j+1}|}{9}$. By taking $l=rq_s-1$, one has $\frac{\ln |m|}{h}<l<4\frac{\ln |m|}{h}$.

By putting all cases together, we finish the proof of (ii).
\end{proof}}

\footnotesize
\bibliographystyle{abbrv} % abbrv

\end{document}